\def\cD{{\cal D}}
\def\cE{{\cal E}}
\def\cF{{\cal F}}
\def\cU{{\cal U}}
\def\cV{{\cal V}}
\def\mD{{\mathbb D}}
\def\mQ{{\mathbb Q}}
\def\mR{{\mathbb R}}
\def\mT{{\mathbb T}}
\def\mU{{\mathbb U}}
\def\mZ{{\mathbb Z}}
\def\R{\mR}
\def\Z{\mZ}
\def\ve{\varepsilon}
\def\vp{\varphi}
\newcommand\trans{\mathrm T}
\DeclareMathOperator\Id{Id}
\DeclareMathOperator\length{length}
\DeclareMathOperator\Area{Area}
\def\<{\langle}
\def\>{\rangle}
\def\sm{\setminus}
\newtheorem{theorem}{Theorem}[section]
\newtheorem{remark}[theorem]{Remark}
\newtheorem{lemma}[theorem]{Lemma}
\newtheorem{proposition}[theorem]{Proposition}
\newtheorem{corollary}[theorem]{Corollary}
\newtheorem{definition}[theorem]{Definition}
\newtheorem*{proposition*}{Proposition}
\newtheorem*{theorem*}{Theorem}
\newtheorem*{lemma*}{Lemma}
\DeclareMathOperator\Cone{Cone}
\DeclareMathOperator\Hull{Hull}
\renewcommand\subset{\subseteq}
\newcommand{\oset}[2]{%
  {\mathop{#2}\limits^{\vbox to -.5\ex@{\kern-\tw@\ex@
   \hbox{\scriptsize #1}\vss}}}}
\DeclareMathOperator\ConeS{{\oset {$\circ$} {\Cone}}} 
\def\RZ2{\cF(\Z^2)}
\def\FO{{\text{\tiny WS}}}
\def\SB{{\text{\tiny LBR}}}
\def\FD{{\rm FD}}
\def\huefg{h(\Delta^+_e u(x), \Delta^+_f u(x), \Delta^+_g u(x))}
\author{
Jean-David Benamou$^*$, 
Francis Collino%
\footnote{%
Mokaplan, INRIA, Domaine de Voluceau BP 105 78153, Le Chesnay Cedex France
}, 
Jean-Marie Mirebeau%
\footnote{%
CNRS, University Paris Dauphine, UMR 7534, Laboratory CEREMADE, Paris, France.\newline 
\emph{E-mail}: mirebeau<at>ceremade.dauphine.fr, jean-david.benamou<at>inria.fr \newline
\emph{Key words}: Monge Ampere PDE, Monotone Finite differences scheme, Lattice Basis reduction, Stern-Brocot tree.\newline
\emph{Mathematics Subject Classification (2000)}: 35J96, 65N06
}
}
\def\cVin{\cV_\Omega}
\def\cVu{{\overline \cV}}
\def\cDu{\overline \cD}
\def\DE2{DE2}
\def\FX{{\mathbb U}} 
\def\imagetop#1{\vtop{\null\hbox{#1}}}
\begin{document}
\title{
Monotone and Consistent discretization\\ of the  
Monge-Ampere operator
}
\maketitle
\date{}

\begin{abstract} 
We introduce a novel discretization of the Monge-Ampere operator, simultaneously consistent and degenerate elliptic, hence accurate and robust in applications. 
These properties are achieved by exploiting the arithmetic structure of the discrete domain, assumed to be a two dimensional cartesian grid. 
The construction of our scheme is simple, but its analysis relies on original tools seldom encountered in numerical analysis, such as the geometry of two dimensional lattices, and an arithmetic structure called the Stern-Brocot tree.
Numerical experiments illustrate the method's efficiency.
\end{abstract}

\section{Introduction}
We introduce a new discretization of the Monge-Ampere operator, on two dimensional cartesian grids, which is consistent and preserves at the discrete level a fundamental property of the continuous operator: degenerate ellipticity.
Discrete degenerate ellipticity \cite{Oberman:2006bd} implies strong guarantees for the numerical scheme: a comparison principle, convergence of discrete solutions towards the continuous one in the setting of viscosity solutions, and convergence of Euler iterative solvers for the discrete system \cite{Oberman:2006bd}. 
Some Degenerate Elliptic (DE) schemes for the Monge-Ampere (MA) Partial Differential Equation (PDE) already exist \cite{Froese:2011ka,Oberman:2006bd}, but they suffer from several flaws: they are strongly non-local, and only approximately consistent. 
Consistent \emph{non DE} schemes such as \cite{Loeper:2005fn,Brenner:2012ka} offer better accuracy, but require the PDE solution to be sufficiently smooth and the discrete numerical solver to be well initialized. Filtered schemes \cite{Froese:2013ez} nonlinearly combine several existing schemes, in order to cumulate their advantages (here degenerate ellipticity and consistency), or mitigate their defects. Their definition and their analysis are however complex, and their application requires to adjust several parameters.  For a recent overview of the numerical  approaches to solving 
the  Monge-Amp\`ere equation, see Glowinski, Feng and Neilan \cite{Feng:2013cg}. 

We introduce a new numerical scheme, Monge-Amp\`ere using Lattice Basis Reduction (MA-LBR), which is both consistent\footnote{Assuming the solution hessian condition number is uniformly bounded} \emph{and} degenerate elliptic. Lattice Basis reduction is a tool from discrete geometry, which arises here due to the interaction of the cartesian discretization grid, with the anisotropic nature of the Monge-Ampere operator. This operator is indeed invariant under all linear changes of variables with unit determinant, unlike e.g.\ the Laplacian which is merely invariant under orthogonal transformations. 
The MA-LBR belongs is inspired by the Wide-Stencil \cite{Oberman:2006bd} family of schemes. 
Using another arithmetic tool, the Stern-Brocot tree, we solve a second issue plaguing these methods (in addition to consistency errors): our discretization stencil needs not be chosen a priori (which usually involves a difficult arbitrage between scheme locality, consistency error and available CPU time), but  can be generated automatically in a guaranteed, parameter free and solution adapted manner.
Numerical experiments \S \ref{sec:Num} illustrate the MA-LBR accuracy and robustness.

We fix throughout this paper a convex open bounded 
domain $\Omega \subset \R^2$. Given a density $\rho \in C^0(\overline \Omega, \R_+^*)$, and some Dirichlet data $\sigma \in C^0(\partial \Omega, \R)$, we set the goal of approximating numerically the unique viscosity solution \cite{Crandall:1992kn,Gutierrez:2001wq} of
\begin{equation}
\label{eq:MAD}
\begin{cases}
\det(\nabla^2 u) = \rho & \text{on } \Omega, \\
u = \sigma & \text{on } \partial \Omega,\\
u  \text{ convex.}
\end{cases}
\end{equation}
Our framework admittedly does not encompass solutions of the weaker Alexandrov type, where $\rho$ is merely a non-negative measure. 
If $\Omega$ is convex but not strictly convex, then the Dirichlet data $\sigma$ is assumed to be convex on any segment of $\partial \Omega$. Let us point out that optimal transport, from $\Omega$ to another domain $\Omega'$, equipped with densities $\rho$, $\rho'$, admits a PDE formulation similar in spirit to \eqref{eq:MAD}: $\rho'(\nabla u) \det (\nabla^2 u) = \rho$, $\nabla u(\Omega) \subset \Omega'$, $u$ convex. The gradient non-linearity, and the second boundary condition, raise difficulties  \cite{Urbas:1996uz,Benamou:2014jw} that we choose not to address in the present paper, focusing instead on the Monge Ampere operator $\det(\nabla^2 u)$. 

We assume that the PDE domain $\Omega$ is discretized on a cartesian grid: $\Omega \cap h R (\xi + \Z^2)$, where $h>0$ is the grid scale, $R$ is an arbitrary rotation, and $\xi$ is an offset. For notational simplicity, and up to a linear change of coordinates, we limit our attention to the canonical values of these parameters,
so that the discrete domain is
\begin{equation*}
X := \Omega \cap \Z^2.
\end{equation*}
\begin{definition}
We denote by $\FX$ the collection of discrete maps $u : X \cup \partial \Omega \to \R$. A (discrete) operator is a map $\cD : \FX \to \R^X$. It associates to each $u \in \FX$ a collection of values $\cD u(x)$, $x \in X$.
\end{definition}
The notations $\cD u$ and $\cD(u)$ refer to the same object, which is a map $X \to \R$, and are used interchangeably with the aim of improving readability.
In numerical experiments, the values of $u \in \FX$ on $X$ are the unknowns, while the values on $\partial \Omega$ are the supplied boundary data:  $u_{|\partial \Omega} = \sigma$. 
For each $e \in \Z^2$ we introduce a second order differences operator $\Delta_e$, built so that $\Delta_e u(x) \approx  \< e, (\nabla^2 u(x)) e\>$, 
where $u \in \FX$ and $x \in X$, and where $\<\cdot, \cdot\>$ denotes the euclidean scalar product on $\R^2$.
In the simplest case where $x\pm e \in \Omega$, we set  
\begin{equation}
\label{eqdef:Delta}
\Delta_e u (x) := u(x+e)-2 u(x)+u(x-e).
\end{equation}
When $x \in X$ is close to $\partial \Omega$, the points $x+e$ or $x-e$ may not belong to $\Omega$. Denoting by $h^\pm$ the only element of $]0,1]$ such that $x\pm h^\pm e \in X \cup \partial \Omega$, we define 
\begin{equation}
\label{eqdef:DeltaBoundary}
\Delta_e u (x) := \frac 2 {h^++h^-} \left(\frac {u(x+h^+ e)-u(x)}{h^+}+ \frac{u(x-h^{-} e)-u(x)}{h^-}\right). 
\end{equation}
Let us again point out that if $h^+ < 1$, then the value $u(x+h^+ e)$ is the supplied boundary data $\sigma(x+h^+ e)$. On the other hand if $h^+ = h^- = 1$, then \eqref{eqdef:Delta} and \eqref{eqdef:DeltaBoundary} coincide.
No other consistent approximation of $\<e, (\nabla^2 u(x)) e\>$ can be built using the values $u(x+h^+e)$, $u(x)$ and $u(x-h^- e)$. 

Discretizations of the Monge-Ampere operator $\det(\nabla^2 u)$ are typically built upon the operators $\Delta_e$. Consider for instance the Finite Differences (FD) discretization \cite{Loeper:2005fn}
\begin{equation}
\label{eqdef:DFD}
\cD^\FD := \Delta_{(1,0)} \Delta_{(0,1)} - (\Delta_{(1,1)}-\Delta_{(1,-1)})^2/16.
\end{equation}
Given such a discrete operator $\cD$, the discrete analog of \eqref{eq:MAD} takes the form:
\begin{equation}
\label{eq:DiscreteSys}
\text{Find $u \in \FX$, such that $\cD u = \rho$ on $X$, and $u_{|\partial \Omega} = \sigma$.}
\end{equation}
This discrete system lacks a counterpart of the constraint of convexity in \eqref{eq:MAD} because (i) there is no unique notion of discrete convexity but several competing approaches, see for instance \cite{Mirebeau:Vn5Iu9VK,Oberman:2011wy}, and (ii) some form of discrete convexity constraint can often be embedded in the equation $\cD u = \rho$, see \S \ref{sec:HierarchyIntro}.
From a theoretical and a practical standpoint, choosing $\cD^\FD$ in \eqref{eq:DiscreteSys} is a risky bet: second order convergence can often be observed in numerical experiments, see \S \ref{sec:Num}, but only on rather easy cases and with a good initialization for the numerical solver. 
Robustness results (existence, uniqueness, and algorithmic guarantees) are limited to discretizations obeying an additional property: a counterpart of the ellipticity of the (opposite of the) Monge-Ampere operator $-\det (\nabla^2 u)$. 

We use the notion of discrete degenerate ellipticity \cite{Oberman:2006bd}, slightly specialized due to our focus on MA. Degenerate Elliptic  Monge-Ampere numerical schemes cannot be strictly local, unlike \eqref{eqdef:DFD}, but instead need to take into account some long range second order differences, indexed by a possibly wide stencil. 

\begin{definition}
\label{def:Stencil}
A stencil is a finite set $V \subset \Z^2\sm \{0\}$ which is symmetric with respect to the origin (i.e. $-e \in V$ for each $e \in V$).
\end{definition}

\begin{definition} 
(DE2 scheme)
A numerical scheme $-\cD$ is Degenerate Elliptic, with stencil $V$, iff for each $x \in X$ the quantity $\cD u (x)$ is a non-decreasing, locally Lipschitz function of the second order differences $\Delta_e u(x)$, $e \in V$.
\end{definition}

\begin{figure}
\centering
\includegraphics[width=3.8cm]{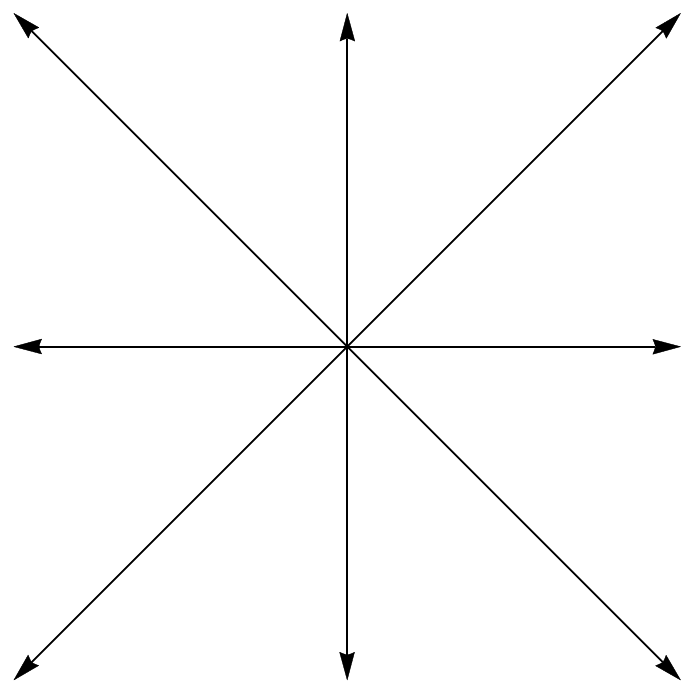}
\includegraphics[width=3.8cm]{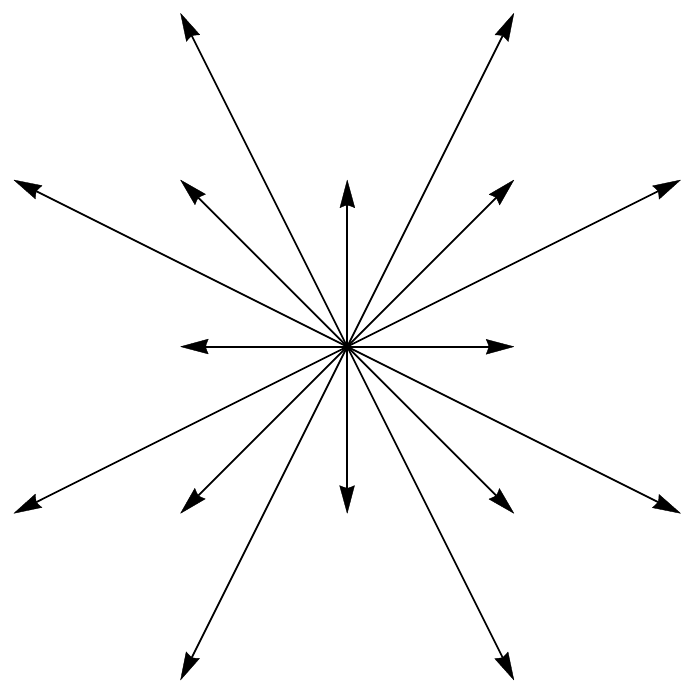}
\includegraphics[width=3.8cm]{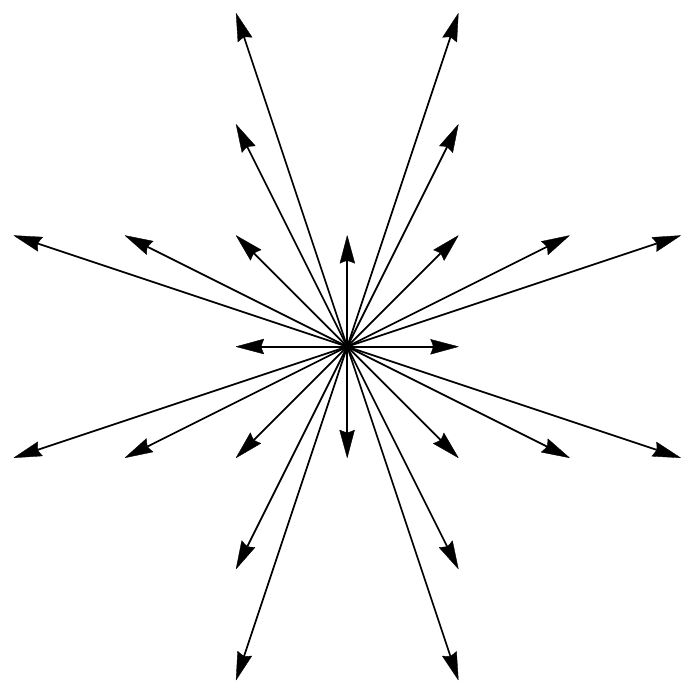}
\includegraphics[width=3.8cm]{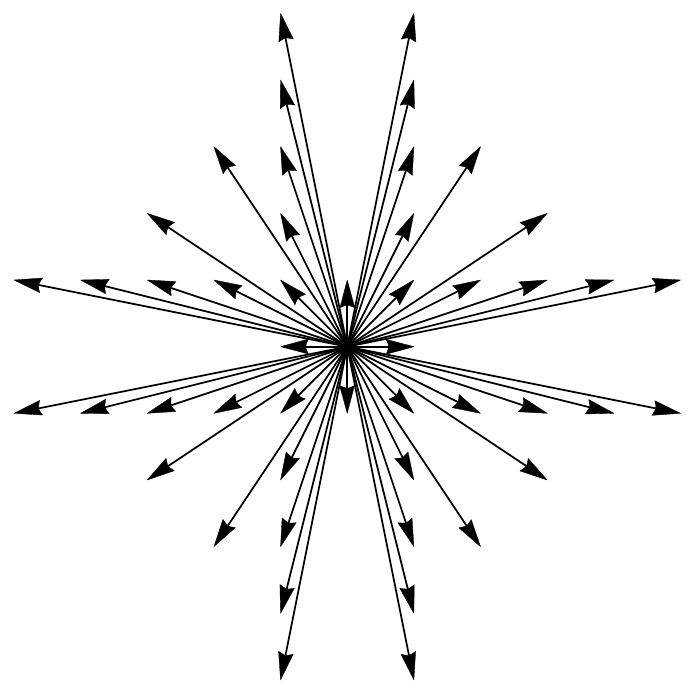}
\caption{
Examples of stencils $V \subset \Z^2$, containing $8$, $16$, $24$ and $48$ elements respectively. Wider stencils yield smaller consistency errors, see Figure \ref{fig:RelativeError}.
}
\label{fig:Stencils}
\end{figure}

Observing that the second order difference $\Delta_e u (x)$ can be expressed as a non-negative weighted sum of first order differences \eqref{eqdef:DeltaBoundary}, we immediately find that 
a DE2 scheme is degenerate elliptic in the sense of \cite{Oberman:2006bd}. 
DE2 schemes are  also  positive difference operators in the sense of \cite{Kuo:1992hf} in this paper schemes are indeed  built using directional 
second order finite differences. 
In particular, for any $\ve>0$, the slightly perturbed operator $-\cD_\ve$, defined by $\cD_\ve u(x) := \cD u(x) - \ve u(x)$, is \emph{proper} degenerate elliptic \cite{Froese:2013ez}. This in turn implies that the discrete system \eqref{eq:DiscreteSys} associated with $\cD_\ve$ has a unique solution, which can be computed with a geometric convergence rate using an iterative Euler scheme. We refer to \cite{Froese:2013ez} and references therein for these results and will say no more on this analytic machinery in the rest of the paper, focusing instead on the algebraic structure of discrete Monge-Ampere operators.

Froese and Oberman \cite{Froese:2011ka} numerically address the MA PDE using a DE2 operator, referred to as the Wide Stencil (WS) scheme. Given a stencil $V$, and denoting $\Delta_e^+ := \max \{0, \Delta_e\}$:
\begin{equation}
\label{eq:FO}
\cD^\FO_V u (x):= \min_{\substack{(f,g) \in V^2\\\text{orthogonal}}} \frac{\Delta^+_{f} u (x)}{\|f\|^2} \,  \frac{\Delta^+_{g} u (x)}{\|g\|^2}.
\end{equation}
The minimum is taken over all pairs of vectors $(f,g) \in V^2$ which are orthogonal, in the sense that $\<f,g\>=0$. For instance $(1,0), (0,1)$, or $(2,1), (-1,2)$.
We introduce a variant of this operator, which does not rely on pairs of orthogonal stencil vectors, but on superbases of the lattice $\mZ^2$. 
\begin{definition}
A basis of $\Z^2$ is a pair $(f,g)\in (\Z^2)^2$ such that $|\det(f,g)| = 1$.\\
A superbase of $\mZ^2$ is a triplet $(e,f,g) \in (\mZ^2)^3$ such that $e+f+g=0$, and $(f,g)$ is a basis of $\mZ^2$.
\end{definition}

\begin{figure}
\centering
\includegraphics[width=2.2cm]{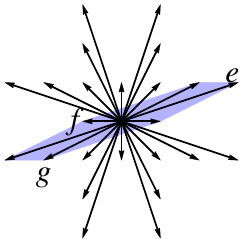}
\includegraphics[width=2.2cm]{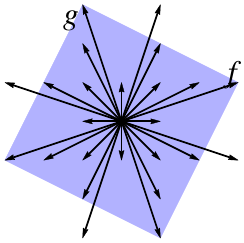}
\hspace{1.5cm}
\includegraphics[width=2cm]{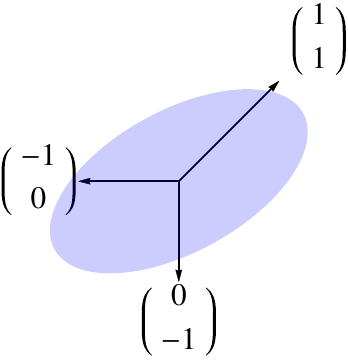}
\includegraphics[width=2.5cm]{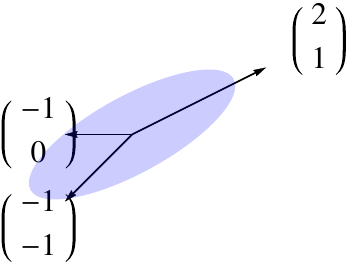}
\hspace{-0.2cm}
\includegraphics[width=3cm]{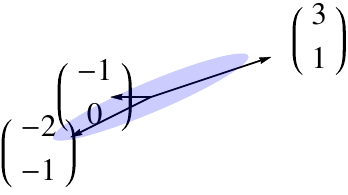}
\caption{Left: A Stencil $V$, a superbase $(e,f,g)\in V^3$, an orthogonal pair $(f,g) \in V^2$. Right: $M$-obtuse superbase, see Definition \ref{def:ObtuseSuperbase}, and ellipse $\{v \in \mR^2; \<v, M v\>\leq 1\}$ for some $M \in S_2^+$.}
\end{figure}

The MA-LBR operator, associated to a stencil $V$, is defined by 
\begin{equation}
\label{eq:SB}
\cD^\SB_V u (x) :=
\min_{\substack{(e,f,g) \in V^3\\ \text{superbase} } } \huefg 
\end{equation}
where for $a,b,c \in \mR_+$ we define
\begin{equation}
\label{eqdef:h}
h(a,b,c) := 
\begin{cases}
b c  \text{ if } a \geq b+c, \text{ and likewise permuting } a,b,c,\\
\frac 1 2 (ab+bc+ca) - \frac 1 4(a^2+b^2+c^2)  \text{ otherwise}.
\end{cases}
\end{equation}
Remark \ref{rem:Geom} provides a geometric interpretation for these at first abstruse formulas.
The operators \eqref{eq:FO} and \eqref{eq:SB} are DE2 since the product $\times : \R_+^2 \to \R_+$ is non-decreasing in each variable, as well as the function $h : \R_+^3 \to \R_+$, see Lemma \ref{lem:hDiff}.

\paragraph{Outline.}
We discuss in \S \ref{sec:LocalIntro} the consistency of the MA-LBR, and show in particular that a finite stencil is sufficient to achieve consistency for all quadratic functions of condition number below a given bound.
For more simplicity and efficiency we introduce in \S \ref{sec:HierarchyIntro} an automatic stencil construction for the MA-LBR, which is adaptive, local, anisotropic, parameter free, and has good consistency guarantees. 
The proofs of the results appearing in \S \ref{sec:LocalIntro} and \S \ref{sec:HierarchyIntro} are postponed to \S \ref{sec:Local} and \S \ref{sec:Hierarchy} respectively.

\paragraph{Notations.} For each $e=(a,b) \in \mR^2$ we denote $e^\perp := (-b,a)$. If $e=(a,b) \in \Z^2$ then $\gcd(e) := \gcd(a,b)$. Given  pairwise distinct $x_1,x_2,x_3 \in \R^2$, we denote by $[x_1,x_2]$ the segment of endpoints $x_1,x_2$, and by $[x_1, x_2, x_3]$ the triangle of vertices $x_1,x_2,x_3$.

\subsection{Consistency}
\label{sec:LocalIntro}

\begin{figure}
\centering
\begin{tabular}{ccccc}
\imagetop{\includegraphics[width=3.7cm]{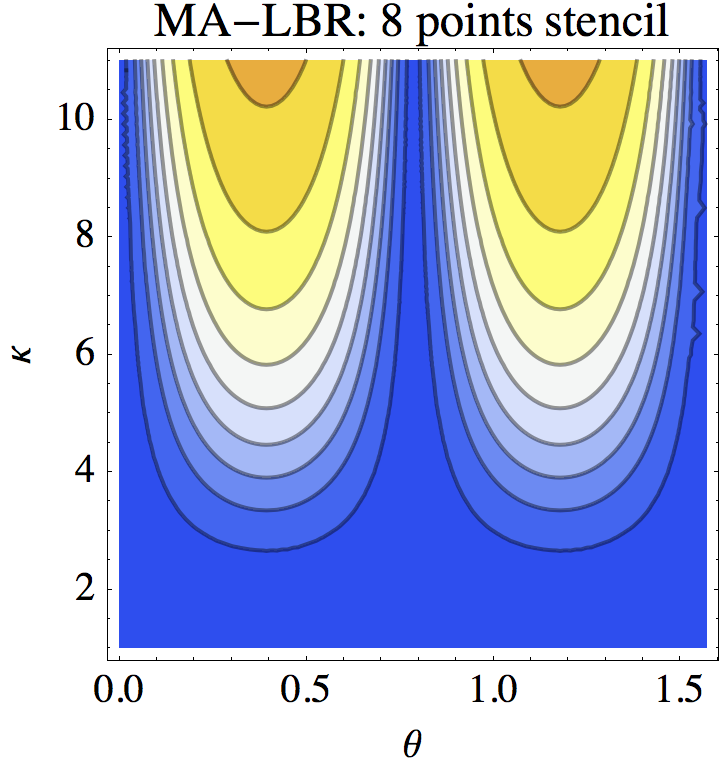}} &
\imagetop{\includegraphics[width=3.7cm]{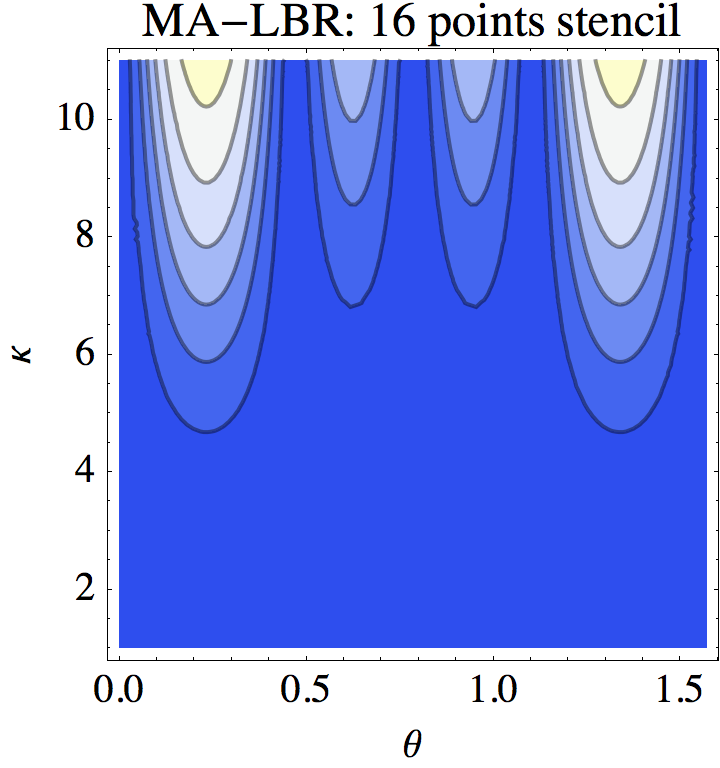}} &
\imagetop{\includegraphics[width=3.7cm]{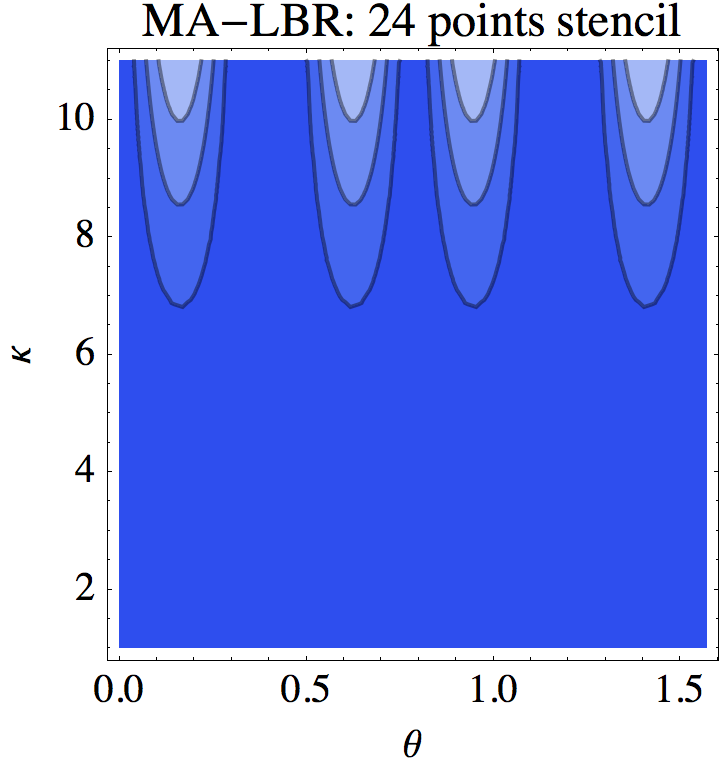}} &
\imagetop{\includegraphics[width=3.7cm]{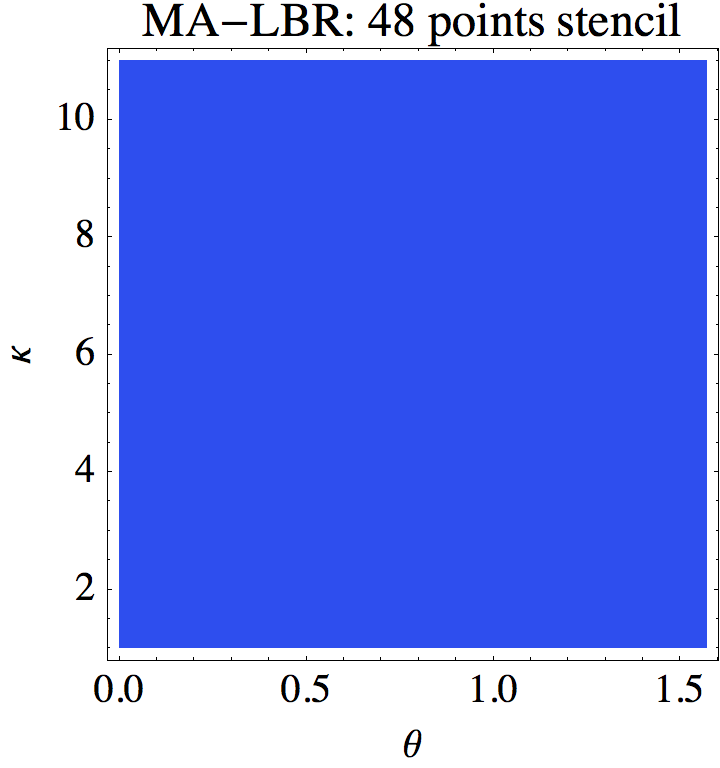}} &
\imagetop{\includegraphics[width=0.7cm]{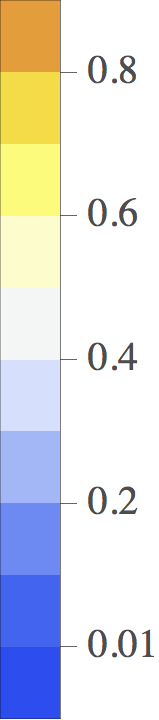}} \\
\imagetop{\includegraphics[width=3.7cm]{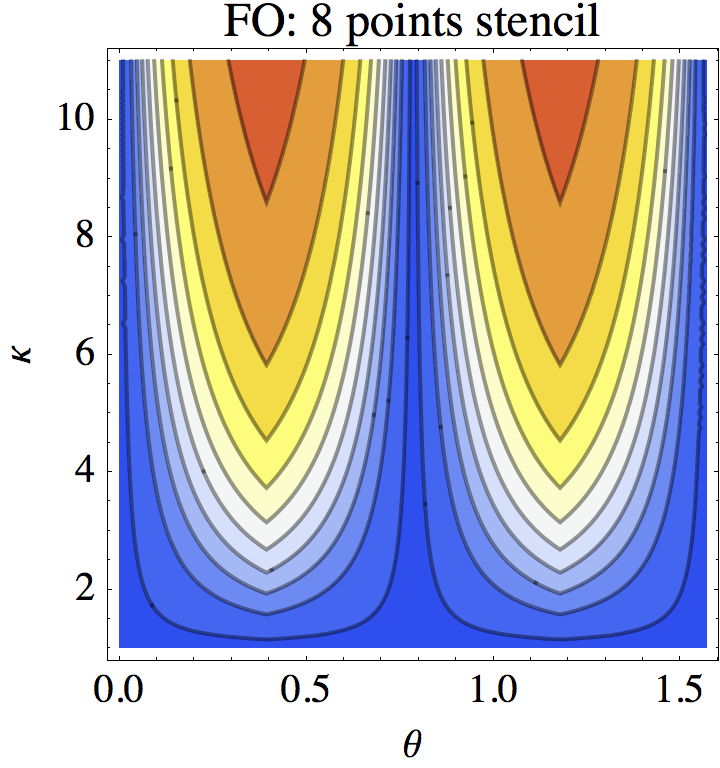}} & 
\imagetop{\includegraphics[width=3.7cm]{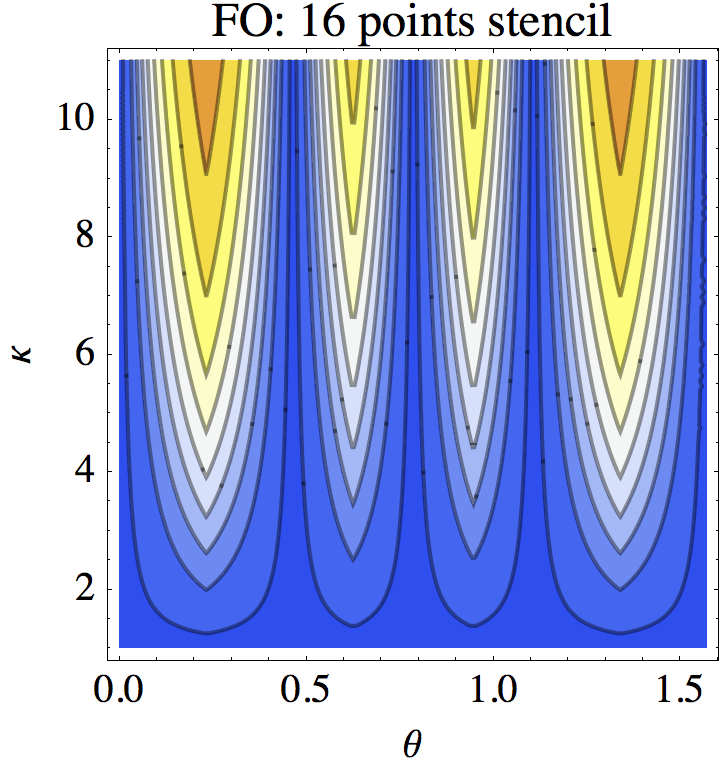}} &
\imagetop{\includegraphics[width=3.7cm]{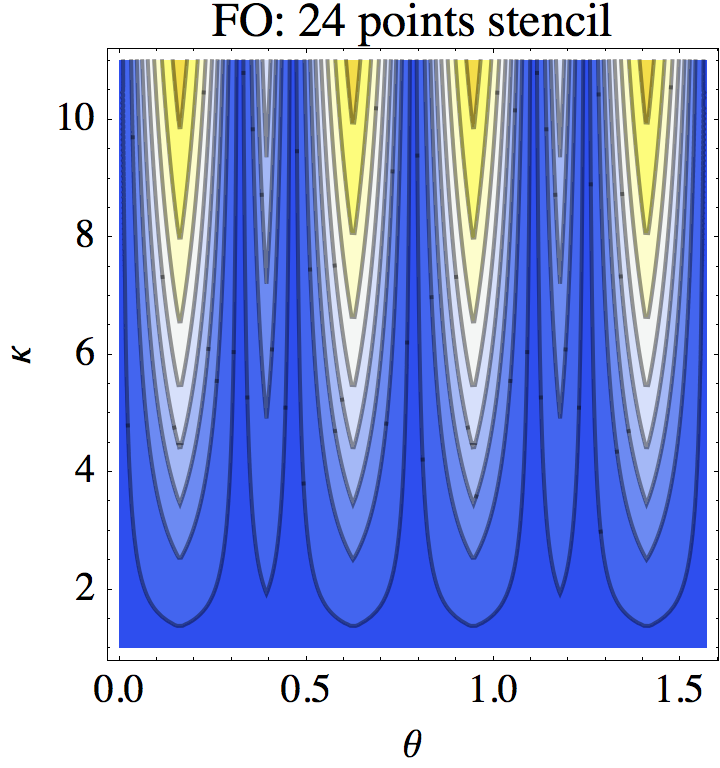}} &
\imagetop{\includegraphics[width=3.7cm]{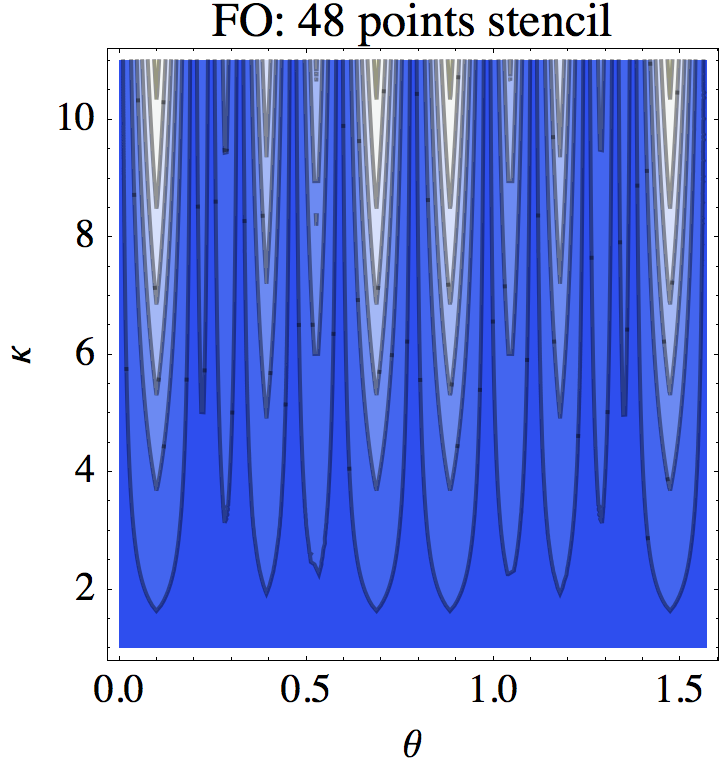}} &
\imagetop{\includegraphics[width=0.7cm]{Images/Consistency/Legend.png}}
\end{tabular}
\caption{Relative consistency error $(\cD(u_M) - \det(M))/\cD(u_M)$ for quadratic functions \eqref{eqdef:UM}, with the schemes MA-LBR (top) and WS (bottom), using the stencils of Figure \ref{fig:Stencils}. 
See \eqref{eq:MatParam} for the parametrization $M(\kappa, \theta)$ of symmetric matrices, by their condition number $\kappa^2$ and their orientation $\theta$.
Note that the MA-LBR consistency error vanishes for a large set of matrices.
}
\label{fig:RelativeError}
\end{figure}

The consistency analysis of the numerical schemes FD, FO and the MA-LBR reveals significant differences. 
We denote by $S_2$ the collection of symmetric matrices of size $2 \times 2$, and by $S_2^+$ those which are positive definite. For each $M \in S_2^+$ we introduce a quadratic map $u_M \in \FX$, defined by $u_M(x) := \<x, Mx\>/2$, $x \in X \cup \partial \Omega$. Since the second order difference operator $\Delta_e$ is consistent, for any $e \in \Z^2$, it is exact for $u_M$. Summarizing one has 
\begin{align}
\label{eqdef:UM}
u_M(x) &:= \frac 1 2 \<x, M x\>, &
\Delta_e u_M(x) &= \<e, M e\>.
\end{align}
\begin{definition}
The consistency set of an operator $\cD$ is the collection of matrices $M \in S_2^+$ for which $\cD(u_M) = \det(M)$, identically on $X$.
\end{definition}

One easily checks that the consistency set of the finite differences discretization $\cD^\FD$, see \eqref{eqdef:DFD}, is the whole $S_2^+$. In fact the identity $\cD^\FD(u_M) = \det(M)$ also holds for non-definite matrices $M \in S_2$, although they are irrelevant for our application. Since that scheme is not DE, this consistency does not imply convergence results. 
As illustrated in Figures \ref{fig:RelativeError} and \ref{fig:Consistency}, schemes WS and MA-LBR have in contrast non-trivial consistency sets, depending on the chosen stencil. Matrices $M \in S_2^+$ are parameterized in these figures by their condition number $\kappa^2 \in [1, \infty[$, and the orientation $\theta \in [0, \pi]$ of their first eigenvector $e_\theta$:
\begin{equation}
\label{eq:MatParam}
M(\kappa, \theta)  = \kappa^{-1} \, e_\theta \otimes e_\theta + \kappa \, e_\theta^\perp \otimes e_\theta^\perp, \qquad \text{with } e_\theta = (\cos \theta, \sin \theta).
\end{equation}

The consistency analysis of $\cD^\FO_V$ is based on Hadamard's theorem \cite{Froese:2011ka}: for all $M \in S_2^+$, and any pair $(f,g) \in (\R^2)^2$ of non-zero orthogonal vectors, one has $\<f,M f\> \<g , M g\> \geq \|f\|^2 \|g\|^2 \det(M)$, with equality iff $f$ and $g$ are eigenvectors of $M$.
As a result, scheme $\cD^\FO_V$ is only consistent on a negligible subset of $S_2^+$: those matrices which eigenvectors lie in $V$, see Figures \ref{fig:RelativeError} and \ref{fig:Consistency}. 
From a theoretical standpoint, convergence results are obtained in \cite{Froese:2011ka} by increasing the stencil size, up to infinity, as the discretization grid scale tends to zero. In practical cases, finding the optimal stencil size is non-trivial, see \S \ref{sec:Num}.

\begin{figure} 
\centering
\begin{tabular}{ccc}
\imagetop{\includegraphics[width=4cm]{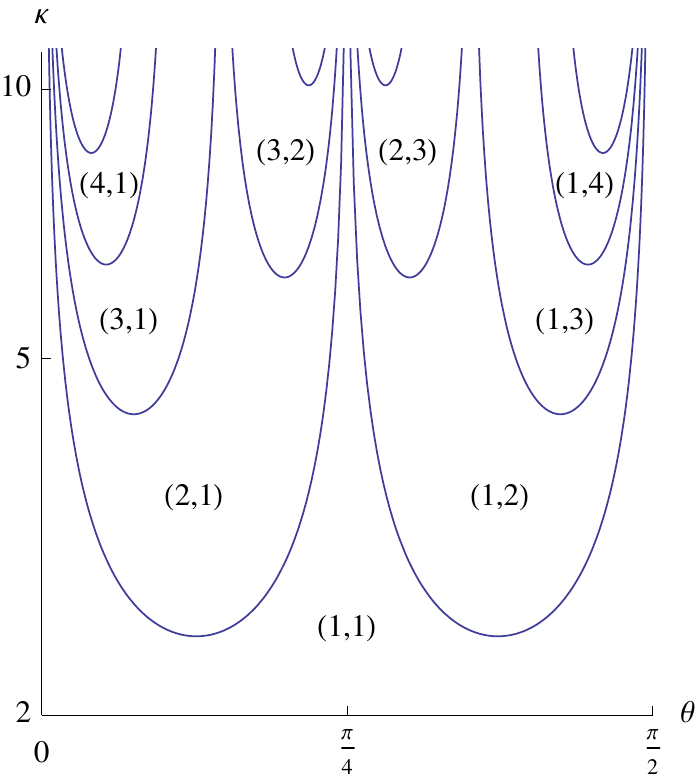}}
&
\imagetop{\includegraphics[width=4cm]{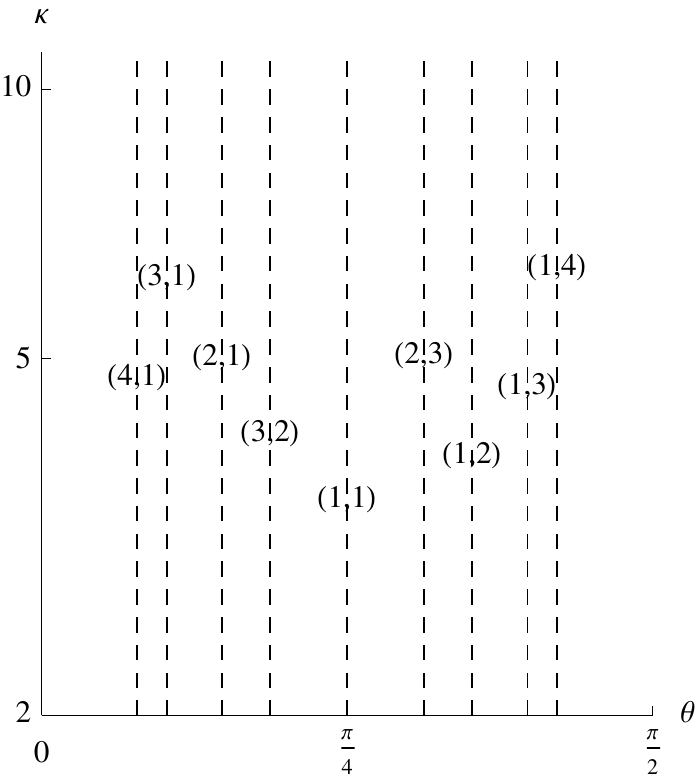}}
&
\imagetop{
\begin{tabular}{c|cc}
\includegraphics[width=1.8cm]{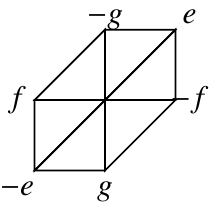}   &
\includegraphics[width=1.8cm]{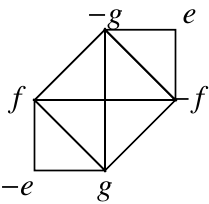} &
\includegraphics[width=1.8cm]{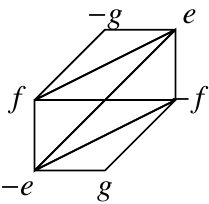} 
\\
\includegraphics[width=1.8cm]{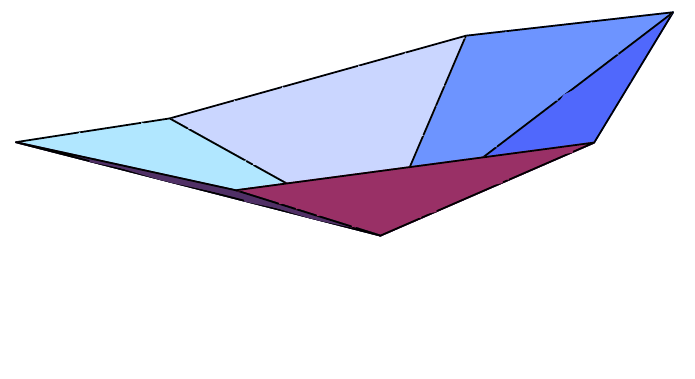} &
\includegraphics[width=1.8cm]{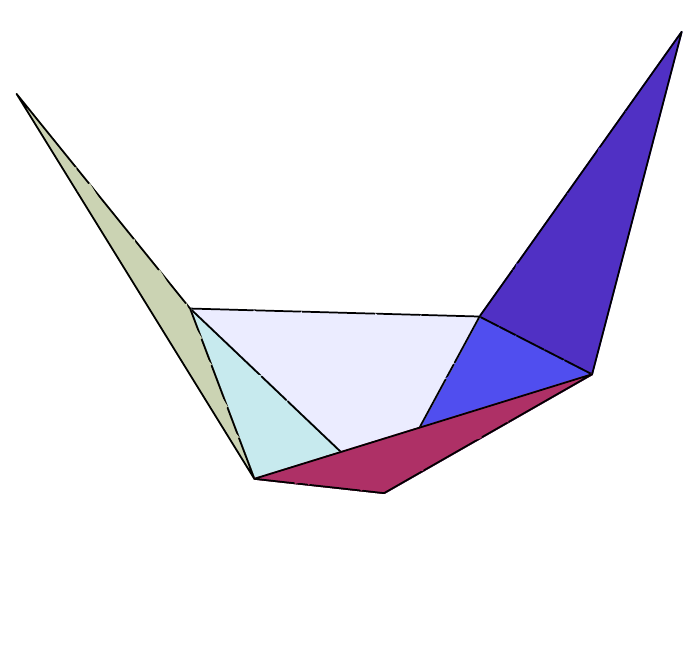} &
\includegraphics[width=1.8cm]{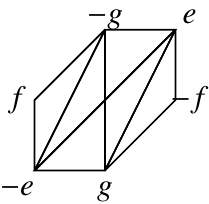} 
\end{tabular}
}
\end{tabular}
\caption{
Left: element $e$ of largest euclidean norm of an $M(\kappa, \theta)$-obtuse superbase.
Center left: an eigenvector $e$ of $M(\kappa, \theta)$. The consistency set of $\cD^\SB_V$ (resp.\ $\cD^\FO_V$) is the union of the regions (resp.\ the dashed lines) corresponding to all stencil elements $e \in V$.
Right: triangulations mentioned in Remark \ref{rem:Geom}, and some 3D visualisations of the maximal convex extension $U$. 
}
\label{fig:Consistency}
\label{fig:Subgradient}
\end{figure}

The key concept in the MA-LBR consistency analysis is the notion of $M$-obtuse superbase, which originates from lattice geometry \cite{Conway:1992cq} 
(a lattice is a discrete subgroup of $\R^n$ containing a basis, such as $\mZ^n$). 
It was already applied to PDE discretizations in \cite{mirebeau2012anisotropic,Fehrenbach:2013ut}.

\begin{definition}
\label{def:ObtuseSuperbase}
Let $M \in S_2^+$. A superbase $(e_0,e_1,e_2)$ of $\mZ^2$ is said $M$-obtuse iff $\<e_i , M e_j\> \leq 0$ for all $0 \leq i < j \leq 2$.
\end{definition}

\begin{theorem}[Consistency]
\label{th:Consistency}
A matrix $M \in S_2^+$ is in the consistency set of $\cD^\SB_V$ iff  there exists $(e,f,g) \in V^3$ which form an $M$-obtuse superbase. 
\end{theorem}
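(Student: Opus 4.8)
The plan is to reduce the theorem to a single scalar comparison, carried out superbase by superbase. Since $u_M$ is quadratic, both the interior formula \eqref{eqdef:Delta} and the boundary formula \eqref{eqdef:DeltaBoundary} are exact, so $\Delta_e u_M(x) = \<e,Me\>$ is independent of $x$, and $\cD^\SB_V(u_M)$ is a constant function on $X$; the condition $\cD^\SB_V(u_M) = \det(M)$ thus becomes a single identity. Because $M \in S_2^+$, we have $\<e,Me\> > 0$ for every $e \neq 0$, so the positive parts cause no truncation: for a superbase $(e,f,g)\in V^3$, setting $a := \<e,Me\>$, $b := \<f,Mf\>$, $c := \<g,Mg\> > 0$, the quantity $\huefg$ equals $h(a,b,c)$. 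Everything therefore hinges on comparing $h(a,b,c)$ with $\det(M)$, superbase by superbase.

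The key claim is that $h(a,b,c) \geq \det(M)$ for every superbase $(e,f,g)$, with equality precisely when $(e,f,g)$ is $M$-obtuse. To prove it I would first use $e+f+g = 0$ to rewrite the mixed products: expanding $\<g,Mg\> = \<e+f,M(e+f)\>$ and its two cyclic analogues gives $\<f,Mg\> = (a-b-c)/2$, $\<g,Me\> = (b-c-a)/2$, $\<e,Mf\> = (c-a-b)/2$. Hence the three off-diagonal obtuseness conditions $\<f,Mg\>\leq 0$, $\<g,Me\>\leq 0$, $\<e,Mf\>\leq 0$ are equivalent to the triangle inequalities $a\leq b+c$, $b\leq c+a$, $c\leq a+b$, that is, to the ``otherwise'' branch of the definition \eqref{eqdef:h} of $h$. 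Next, since $(f,g)$ is a basis of $\Z^2$, the matrix $B := (f \mid g)$ satisfies $|\det B| = 1$, so the Gram matrix $B^\top M B$ shares its determinant with $M$; computed directly from its entries $b,\ (a-b-c)/2,\ c$ this determinant is
\[
\det(M) = bc - \left(\tfrac{a-b-c}{2}\right)^2 = \tfrac 1 2(ab+bc+ca) - \tfrac 1 4(a^2+b^2+c^2),
\]
an identity valid for \emph{every} superbase, whose right-hand side is exactly the ``otherwise'' formula for $h$.

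Combining these two observations yields the claim at once. In the $M$-obtuse case, $h(a,b,c)$ is given by the second branch, which coincides with $\det(M)$. In a degenerate case, say $a > b+c$, one has $h(a,b,c) = bc > bc - ((a-b-c)/2)^2 = \det(M)$ since $(a-b-c)^2 > 0$; the boundary $a = b+c$ gives equality and also satisfies (with $\<f,Mg\> = 0$) the closed obtuseness condition, so the equality set of $h(a,b,c)=\det(M)$ is exactly the set of $M$-obtuse superbases. The theorem follows: $\cD^\SB_V(u_M) = \min_{(e,f,g)} h(a,b,c) \geq \det(M)$ over superbases in $V^3$, and equality holds iff some superbase in $V^3$ realizes $\det(M)$, iff it is $M$-obtuse.

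The only genuinely delicate point is the algebraic identity relating the non-degenerate branch of $h$ to $\det(M)$ through a unit-determinant change of basis; once the mixed products are rewritten via $e+f+g=0$, the chain ``$M$-obtuse $\Leftrightarrow$ triangle inequalities $\Leftrightarrow$ non-degenerate branch'' and the inequality $h \geq \det(M)$ emerge together, and I anticipate no further difficulty beyond a small check that the two branches of $h$ agree on the boundary $a=b+c$, which is what makes the equality case align with the closed condition of Definition \ref{def:ObtuseSuperbase}.
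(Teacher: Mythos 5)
Your proof is correct and takes essentially the same route as the paper's: both reduce the $M$-obtuseness conditions to the triangle inequalities on $(a,b,c)$ via $e+f+g=0$, identify the second branch of $h$ with $\det(M)$ through unimodular invariance, and conclude from the case analysis showing $h(a,b,c)$ exceeds that quantity strictly whenever a triangle inequality fails. The only cosmetic difference is in how the identity $\frac 1 2(ab+bc+ca)-\frac 1 4(a^2+b^2+c^2)=\det(M)$ is established: you compute it directly as the Gram determinant of the basis $(f,g)$, whereas the paper verifies it on the canonical superbase $(-1,-1),(1,0),(0,1)$ and transports it by a unimodular change of variables --- the same invariance argument, packaged differently.
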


The following remark attempts to give a geometrical interpretation of the function \eqref{eqdef:h} and of Theorem \ref{th:Consistency}. 
The results of this section, Theorem \ref{th:Consistency}, Remark \ref{rem:Geom} and Theorem \ref{th:Minimality}, are established in \S \ref{sec:Local}.

\begin{remark}[Geometric interpretation]
\label{rem:Geom}
Let $M \in S_2^+$, and let $(e,f,g)$ be a superbase of $\mZ^2$. Let $U$ be the maximal convex map bounded above by $u_M$ at the points $x,x\pm e, x\pm f, x\pm g$. Then 
$\huefg = \Area(\partial U(x))$
(the Lebesgue measure of the subgradient of $U$ at $x$, which is a natural relaxation of the Monge-Ampere operator \cite{Gutierrez:2001wq}). The map $U$ is polygonal, on one of the four triangulations illustrated Figure \ref{fig:Subgradient}. The identity $|\partial U(x)| = \det(M)$ holds for the first triangulation only, which corresponds to an $M$-obtuse superbase $(e,f,g) $.
\end{remark}

Strikingly, one cannot hope for a DE2 scheme more localized than the MA-LBR. Finding well localized numerical schemes, involving small stencils, is a natural objective \cite{Kocan:1995iq}.
\begin{theorem}[Minimality]
\label{th:Minimality}
Let $\cD$ be a DE2 scheme with stencil $V$.
If the consistency set of $\cD$ contains the neighborhood of a matrix $M\in S_2^+$, then there exists $(e,f,g) \in \Hull(V)^3$ which form an $M$-obtuse superbase.
\end{theorem}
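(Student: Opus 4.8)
The plan is to convert the two hypotheses (degenerate ellipticity and consistency on a neighborhood) into a single algebraic statement about the adjugate $A:=\det(M)\,M^{-1}\in S_2^+$, namely the cone membership $A\in\Cone\{e\otimes e:e\in V\}$, and then to deduce from this, by two‑dimensional lattice geometry, that some $M$-obtuse superbase has its vertices in $\Hull(V)$.

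First I would reduce to the cone membership. Fix an interior point $x\in X$ with $x\pm e\in\Omega$ for all $e\in V$, so that $\Delta_e u_M(x)=\<e,Me\>$ via \eqref{eqdef:Delta}. Degenerate ellipticity means $\cD u(x)=F\big((\Delta_e u(x))_{e\in V}\big)$ for some $F$ nondecreasing in each argument, and consistency on a neighborhood $\mathcal N\subset S_2^+$ of $M$ reads $F\big((\<e,M'e\>)_{e\in V}\big)=\det(M')$ for all $M'\in\mathcal N$. Now take any $N\in S_2$ with $\<e,Ne\>\le 0$ for every $e\in V$. For small $t>0$ we have $M+tN\in\mathcal N$ and $\<e,(M+tN)e\>\le\<e,Me\>$ for all $e$, so monotonicity of $F$ together with consistency gives $\det(M+tN)\le\det(M)$; dividing by $t$ and letting $t\to 0^+$ yields $\operatorname{tr}(AN)\le 0$, since $D\det_M(N)=\det(M)\operatorname{tr}(M^{-1}N)=\operatorname{tr}(AN)$ and $\operatorname{tr}((e\otimes e)N)=\<e,Ne\>$. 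Thus $A$ is polar to the cone $\{N\in S_2:\<e,Ne\>\le 0\ \forall e\in V\}$, which is itself the polar of the finitely generated (hence closed) cone $\Cone\{e\otimes e:e\in V\}$; by cone duality (the bipolar theorem) we conclude $A=\det(M)\,M^{-1}\in\Cone\{e\otimes e:e\in V\}$.

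Next I would record the link between obtuse superbases and $A$. Writing $J=\left(\begin{smallmatrix}0&-1\\1&0\end{smallmatrix}\right)$, one checks directly that $\det(M)\,M^{-1}=J^{\top}MJ$, so $M\mapsto A$ is linear; applying this to Selling's decomposition of $M$ along a superbase gives, for every superbase $(e,f,g)$, the identity
\[
A=-\<f,Mg\>\,e\otimes e\;-\;\<g,Me\>\,f\otimes f\;-\;\<e,Mf\>\,g\otimes g .
\]
Since $e\otimes e,f\otimes f,g\otimes g$ are linearly independent in $S_2\cong\R^3$ for a superbase, this is the \emph{unique} decomposition of $A$ along these tensors, so, setting $\sigma_{(e,f,g)}:=\Cone\{e\otimes e,f\otimes f,g\otimes g\}$, the superbase is $M$-obtuse if and only if $A\in\sigma_{(e,f,g)}$. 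The existence of an obtuse superbase for every positive form (Selling/Voronoi reduction in dimension two) is exactly the statement that the simplicial cones $\sigma_{(e,f,g)}$ tile $S_2^+$; hence our $A$ lies in $\sigma_B$ for some superbase $B$, which is then $M$-obtuse. When $A$ lies on a shared facet several such $B$ exist, and it will suffice to produce one of them inside $\Hull(V)$.

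The remaining task, which I expect to be the main obstacle, is the hull containment: given both $A\in\Cone\{e\otimes e:e\in V\}$ and $A\in\sigma_B$, show the vertices of $B$ lie in $\Hull(V)$. I would first apply Carathéodory to the first membership to reduce it to $A=\pi_1\,e_1\otimes e_1+\pi_2\,e_2\otimes e_2+\pi_3\,e_3\otimes e_3$ with $\pi_j>0$ and $e_1,e_2,e_3\in V$ pairwise independent, so that $A$ lies in the interior of the simplicial cone $\sigma_{(e_1,e_2,e_3)}$. The claim then becomes purely arithmetic: if a positive form lies in the interior of a simplicial cone spanned by integer rank-one tensors and also in its Voronoi tile $\sigma_B$, then the generators of $B$ belong to $\Conv\{\pm e_1,\pm e_2,\pm e_3\}\subset\Hull(V)$. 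I would prove this by reducing the triple $(e_1,e_2,e_3)$ to the obtuse superbase $B$ through a sequence of Selling flips, and tracking, via the Stern–Brocot tree, that each flip replaces a vector by a mediant/difference which is an ancestor of the current vectors and therefore remains in the convex hull of the vectors already produced. The delicate point is precisely this hull-preservation along the descent—equivalently, that the obtuse superbase is dominated in the Stern–Brocot ordering by any stencil whose rank-one tensors enclose $A$. By this stage the analytic input (monotonicity and consistency) has been entirely absorbed into the single inclusion $A\in\Cone\{e\otimes e:e\in V\}$, and what is left is two-dimensional lattice geometry handled by the paper's arithmetic machinery.
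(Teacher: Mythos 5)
Your first two steps are correct, and they are a genuinely nice repackaging of the paper's analytic input: the bipolar-theorem argument showing $A:=\det(M)\,M^{-1}\in\Cone\{e\otimes e;\ e\in V\}$ extracts in one stroke what the paper obtains through the explicit perturbation of Lemma \ref{lem:Angle} and Corollary \ref{corol:ConsecutiveScal}, and your Selling identity $A=-\<f,Mg\>\,e\otimes e-\<g,Me\>\,f\otimes f-\<e,Mf\>\,g\otimes g$, hence the equivalence between $M$-obtuseness of $(e,f,g)$ and $A\in\Cone\{e\otimes e,f\otimes f,g\otimes g\}$, is accurate. The problems start immediately after. First, the inclusion you derived is a \emph{closed}-cone membership, and that statement is strictly weaker than the theorem's hypothesis: for $V=\{\pm(1,0),\pm(0,1)\}$ and $M$ diagonal, $A$ is diagonal, hence $A\in\Cone\{e\otimes e;\ e\in V\}$, yet $\Hull(V)$ is the unit diamond and contains no superbase at all. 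So once the analytic input has been ``entirely absorbed'' into that inclusion, the statement left to prove is \emph{false}; you would at least need the interior version (obtained by running your duality argument at every $M'$ near $M$), which the write-up never establishes. Second, even granting interiority, your Carath\'eodory step fails in cases squarely covered by the theorem: take $M=\Id$ and $V$ the eight-point stencil $V_8$ (for which $\cD^\SB_{V_8}$ is DE2 and consistent on a neighborhood of $\Id$). Then $A=\Id$ lies in the interior of the cone, but it admits \emph{no} representation as a positive combination of three linearly independent tensors $e_i\otimes e_i$, $e_i\in V_8$: every representation collapses onto an orthogonal pair, $(1,0)\otimes(1,0)+(0,1)\otimes(0,1)$ or $\tfrac12\bigl((1,1)\otimes(1,1)+(1,-1)\otimes(1,-1)\bigr)$, or else uses all four directions. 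These are precisely the non-generic matrices that the paper has to treat by a separate approximation argument.

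Third, and decisively, the heart of the theorem --- your ``purely arithmetic'' claim --- is not proved: you flag it yourself as ``the delicate point'' and ``the main obstacle'', and the mechanism you sketch for it is unsound. The Carath\'eodory triple $(e_1,e_2,e_3)$ is in general not (up to signs) a superbase, so Selling flips cannot be started from it; and Selling flips do not preserve convex hulls: flipping the superbase $((1,0),(0,1),(-1,-1))$ produces $(1,-1)$, which lies outside the hexagon $\Conv\{\pm(1,0),\pm(0,1),\pm(1,1)\}$. Nor does Stern--Brocot ancestry imply hull membership: by Lemma \ref{lem:ParentsTriangle} an ancestor of $e$ lies in the triangle $[(1,0),(0,1),e]$, not in the convex hull of the vectors already produced. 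What is missing is exactly the lattice-geometric half of the paper's proof: the refinement of $V$ to all primitive vectors of $\Hull(V)$ (Lemma \ref{lem:TriDet}, Corollary \ref{corol:ConsecutiveDet}), the $M$-acuteness of trigonometrically consecutive pairs (Corollary \ref{corol:ConsecutiveScal}), the shortest-vector characterization of obtuse superbases (Proposition \ref{prop:ObtuseNorm}) which forces an obtuse superbase into the refined stencil (Lemma \ref{lem:AcuteStencilSuperbase}), and the compactness argument handling non-generic $M$. Your proposal offers no working substitute for any of these, so the proof has a genuine gap at its central step.
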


The following algorithm and proposition, dating back to Selling \cite{Selling:1874uo,Conway:1992cq}, constructively shows the existence of an $M$-obtuse superbase for each $M \in S_2^+$, without which Theorems \ref{th:Consistency} and \ref{th:Minimality} would be mostly vacuous. It is worth noting that this algorithm extends to dimension three \cite{Conway:1992cq}. Proposition \ref{prop:convalg2} also immediately implies that all matrices $M \in S_2^+$ with condition number $\|M\| \|M^{-1}\| \leq \kappa^2$ are simultaneously  in the consistency set of the MA-LBR operator $\cD^\SB_V$ with stencil 
\begin{equation}
\label{eq:StencilKappa}
V := \{ e \in \mZ^2;\,  \gcd(e)=1, \|e\| \leq 2 \kappa\}.
\end{equation}

\begin{algorithm}
\caption{Construction of an $M$-obtuse superbase (Selling \cite{Conway:1992cq}).}
\label{algo:Selling}
\begin{tabular}{lll}
 \multicolumn{3}{l}{\textbf{Initialize} $e_0 \leftarrow (-1,-1)$, \ $e_1 \leftarrow (1,0)$, \ $e_2 \leftarrow (0,1)$. 
 (Or any other initial superbase.)}\\
 \multicolumn{3}{l}{\textbf{While} the superbase $(e_0, e_1, e_2)$ is not $M$-obtuse \textbf{do}}\\
& \multicolumn{2}{l}{Find $0\leq i < j \leq 2$ such that $\<e_i, M e_j \> > 0$, and set $(e_0, e_1, e_2) \leftarrow (e_i-e_j,\ e_j,\  -e_i)$.}
\end{tabular}
\end{algorithm}

In order to analyse this algorithm, we associate to each $M \in S_2^+$ the norm 
\begin{equation}
\|e\|_M := \sqrt{\<e, M e\>}, \quad  e \in \mR^2
\end{equation}
\begin{proposition}[Existence of an $M$-obtuse superbase, Selling 1874]
\label{prop:convalg2}
Algorithm 2 terminates, and the final state of $(e_0,e_1,e_2)$ is an $M$-obtuse superbase. Furthermore $\|e_i\|^2 \leq 2 \|M\| \|M^{-1}\|$ for each $0 \leq i \leq 2$.
\end{proposition}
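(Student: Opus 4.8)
The plan is to track a single scalar potential, the sum of squared $M$-norms $S(e_0,e_1,e_2) := \|e_0\|_M^2+\|e_1\|_M^2+\|e_2\|_M^2$, and to extract both termination and the obtuseness of the output from its behaviour. First I would check that each update is legitimate: the new triple $(e_i-e_j,\,e_j,\,-e_i)$ sums to zero, and since any two vectors of a superbase form a basis we have $|\det(e_i,e_j)|=1$, hence $|\det(e_j,-e_i)|=1$, so the new triple is again a superbase of $\Z^2$. Writing $e_k=-(e_i+e_j)$ for the discarded third vector and expanding $\|e_k\|_M^2=\|e_i\|_M^2+2\langle e_i,Me_j\rangle+\|e_j\|_M^2$, a direct computation gives $S_{\mathrm{new}}=S_{\mathrm{old}}-4\langle e_i,Me_j\rangle$. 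Since a step is taken precisely when $\langle e_i,Me_j\rangle>0$, the potential strictly decreases at every iteration.

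For termination I would pair this strict monotonicity with a finiteness argument. Every visited superbase satisfies $S\le S_0$, where $S_0$ is the initial value, so each of its vectors $v$ obeys $\|v\|_M^2\le S_0$, hence $\|v\|^2\le\|M^{-1}\|\,S_0$; only finitely many integer vectors lie in this Euclidean ball, and a superbase is determined by any two of its members, so the set of reachable superbases is finite. A strictly decreasing sequence in a finite set cannot repeat a value, hence must stop. When the loop halts its guard fails, i.e.\ no pair satisfies $\langle e_i,Me_j\rangle>0$, so $\langle e_i,Me_j\rangle\le0$ for all $i<j$, which is exactly the $M$-obtuse condition.

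It remains to bound the norms of the terminal superbase. Since the claim is symmetric in the indices, I would relabel so that $e_0$ has the largest $M$-norm. Setting $P:=-\langle e_1,Me_2\rangle$, $Q:=-\langle e_0,Me_2\rangle$, $R:=-\langle e_0,Me_1\rangle$ (all $\ge0$ by obtuseness) and using $\sum_i e_i=0$, one reads off $\|e_0\|_M^2=Q+R$, $\|e_1\|_M^2=P+R$, $\|e_2\|_M^2=P+Q$. Maximality of $\|e_0\|_M$ forces $P\le Q$ and $P\le R$, whence $|\langle e_1,Me_2\rangle|=P\le\tfrac12\min(\|e_1\|_M^2,\|e_2\|_M^2)$; that is, $(e_1,e_2)$ is a Lagrange--Gauss reduced basis of $\Z^2$ for the $M$-inner product. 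By the classical fact that a reduced basis attains the successive minima $\mu_1\le\mu_2$, the two shorter vectors satisfy $\|e_1\|_M\le\|e_2\|_M=\mu_2$. Since $(1,0),(0,1)$ are linearly independent with $\|(1,0)\|_M^2=M_{11}\le\|M\|$ and $\|(0,1)\|_M^2=M_{22}\le\|M\|$, the definition of $\mu_2$ gives $\mu_2\le\|M\|^{1/2}$, so $\|e_1\|_M^2,\|e_2\|_M^2\le\|M\|$. The triangle-type bound $\|e_0\|_M^2=Q+R\le(P+R)+(P+Q)=\|e_1\|_M^2+\|e_2\|_M^2$ then yields $\|e_0\|_M^2\le2\|M\|$, and converting with $\|v\|^2\le\|M^{-1}\|\,\|v\|_M^2$ gives $\|e_i\|^2\le2\|M\|\|M^{-1}\|$ for every $i$.

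I expect the main obstacle to be the termination argument rather than the algebra: the potential $S$ takes real, not integer, values, so strict decrease alone does not force stopping, and the crux is the observation that boundedness of $S$ confines the iteration to a finite reservoir of admissible superbases. A secondary point requiring care is the successive-minima property of reduced bases, which I would either invoke as classical or justify in two lines from $\|\alpha e_1+\beta e_2\|_M^2\ge(\alpha^2-|\alpha\beta|+\beta^2)\|e_1\|_M^2\ge\|e_1\|_M^2$ for integer $(\alpha,\beta)\neq(0,0)$, together with the analogous lower bound restricted to vectors independent of $e_1$.
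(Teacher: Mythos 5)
Your proof is correct. The first half (well-definedness of the update, the energy identity $\mathcal{E}_{\mathrm{new}}=\mathcal{E}_{\mathrm{old}}-4\langle e_i,Me_j\rangle$, strict decrease, and termination via finiteness of superbases with bounded energy) is essentially identical to the paper's argument. Where you genuinely diverge is the norm bound. The paper compares the terminal energy to the \emph{initial} one: $\|M^{-1}\|^{-1}\mathcal{F}^n \leq \mathcal{E}^n \leq \mathcal{E}^0 \leq \|M\|\,\mathcal{F}^0 = 4\|M\|$, where $\mathcal{F}$ is the sum of squared Euclidean norms, so the bound is inherited from the specific canonical initialization ($\mathcal{F}^0=4$). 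You instead bound the terminal superbase intrinsically: writing $P,Q,R\geq 0$ for the negated mixed products, reading off $\|e_i\|_M^2$ as pairwise sums, observing that the two $M$-shorter vectors form a Lagrange--Gauss reduced basis, and invoking the successive-minima property together with $\mu_2^2\leq\|M\|$ (witnessed by $(1,0),(0,1)$). This buys two things. First, your bound is independent of the initialization, which matters since the algorithm explicitly allows ``any other initial superbase,'' and it in fact shows that \emph{every} $M$-obtuse superbase obeys $\|e_i\|^2\leq 2\|M\|\|M^{-1}\|$ --- which is how the bound is actually used later in the paper (in the proof of Theorem 2, for obtuse superbases not produced by the algorithm). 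Second, your $P,Q,R$ identities close a small gap the paper glosses over: the paper's chain only yields $\mathcal{F}^n\leq 4\|M\|\|M^{-1}\|$, and the trivial step $\|e_i\|^2\leq\mathcal{F}^n$ gives the constant $4$, not $2$; recovering the factor $2$ requires precisely your observation that obtuseness forces $\|e_i\|_M^2\leq\mathcal{E}^n/2$ for each $i$. The only piece you lean on that the paper does not is the classical fact that a reduced basis attains the successive minima, but your two-line sketch via $\alpha^2-|\alpha\beta|+\beta^2\geq 1$ for integer $(\alpha,\beta)\neq(0,0)$ is adequate to make it self-contained.
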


\begin{proof}
To each superbase associate the energy $\cE(e_0, e_1, e_2) := \|e_0\|_M^2+\|e_1\|_M^2+ \|e_2\|^2_M$. One easily checks that $\cE(e_i-e_j,\, e_j,\, -e_i) = \cE(e_0, e_1, e_2) - 4 \<e_i, M e_j\>$, for any $0 \leq i < j \leq 2$.
Denoting by 
$(e_0^n, e_1^n, e_2^n)$ 
the successive superbases generated in Algorithm \ref{algo:Selling}, we observe that the energies $\cE(e_0^n, e_1^n, e_2^n)$ are strictly decreasing by construction. Noticing that there exists only a finite number of superbases with energy below a given bound, we find that the algorithm terminates. At termination, the continuation criterion ``the superbase $(e_0,e_1,e_2)$ is not $M$-obtuse'' is false, which establishes the first point.

Let $\cF(e_0,e_1,e_2) := \|e_0\|^2 + \|e_1\|^2 + \|e_2\|^2$, and let $n$ be the number of loop iterations. Then 
\begin{equation}
\|M^{-1}\|^{-1}\cF(e_0^n, e_1^n, e_2^n) \leq \cE(e_0^n, e_1^n, e_2^n) \leq \cE(e_0^0, e_1^0, e_2^0) \leq \|M\| \cF(e_0^0, e_1^0, e_2^0) = 4 \|M\|,
\end{equation}
which immediately implies the announced bound on the obtuse superbase elements norm. 
\end{proof} 

The MA-LBR consistency error is typically smaller than with the WS scheme, for a given stencil $V$, see Figure \ref{fig:Consistency}. Furthermore while the WS consistency is an asymptotic property, depending on the stencil angular resolution, the MA-LBR has in contrast a consistency set of non-empty interior, and its elements can be identified with a simple test, see Theorem \ref{th:Consistency}.
Unfortunately, choosing the MA-LBR effective stencil $V$ before a numerical simulation remains at this point a puzzle for the practitioner. The option \eqref{eq:StencilKappa} is not practical because: (a) Uniform bounds $\kappa^2$ on the hessian matrix $\nabla^2 u$ condition number  of solutions to \eqref{eq:MAD}, are seldom available. (b) Even if such a bound $\kappa$ is available, the set \eqref{eq:StencilKappa} can be quite large, with cardinality $\gtrsim \kappa^2$. This becomes an issue if the bound is pessimistic, or if the solution hessian $\nabla^2 u$ does degenerate in some places, such as along the domain boundary $\partial \Omega$.  A third issue (c) is that there is no clear way to a-posteriori validate the choice of a given stencil: would the numerical solution be improved with a larger one ?


Selling's algorithm, in contrast with the inefficiency of \eqref{eq:StencilKappa}, adaptively produces an $M$-obtuse superbase in only few iterations. 
We present in the next section an adaptive, anisotropic, parameter free and guaranteed  stencil refinement algorithm, which eliminates the implementation difficulties (a), (b), (c) above. Under the hood, it amounts to an adaptation of Selling's algorithm to non-quadratic functions, see \S \ref{sec:PropagationOnTrees}.

\subsection{Hierarchical stencil refinement}
\label{sec:HierarchyIntro}

The previous section fully characterized the consistency set of the MA-LBR operator $\cD^\SB_V$, associated to a stencil $V$. Larger stencils provide consistency on larger collections of matrices, as established in Theorem \ref{th:Consistency}, and illustrated on Figure \ref{fig:Consistency}. 
Excessively large stencils are however unpractical,
since the CPU cost of evaluating the MA-LBR operator \eqref{eq:SB} is proportional to their cardinality. 
Adapting Selling's obtuse superbase construction, Algorithm \ref{algo:Selling}, 
we show that one can emulate an MA-LBR with extremely large stencils for a limited numerical cost. 

Our adaptive variant of the MA-LBR operator is defined by Algorithm \ref{algo:D2p} below, which is $6$ lines long and only involves elementary operations. Its analysis (and Definition \ref{def:Stencils} of a mild structural constraint on stencils) relies on an arithmetic construction named the Stern-Brocot tree, already used in \cite{Bonnans:2004ud,Mirebeau:2012wm} for the discretization of anisotropic PDEs. Definitions \ref{def:OPlus}, \ref{def:GraphT} introduce this structure. Propositions \ref{prop:Decomp} and \ref{prop:V8T} are variants of commonly known facts on the Stern-Brocot tree which proof is, for completeness, presented in the appendix. 

\begin{proposition} 
\label{prop:Decomp}
The identity $e=f+g$ defines a one to one correspondance between:
\begin{itemize}
\item Vectors $e = (a,b) \in \Z^2$, such that $\gcd(a,b)=1$ and $a b \neq 0$.
\item Direct acute bases $(f,g)$ of $\Z^2$ (i.e. $(f,g) \in (\Z^2)^2$, $\det(f,g) = 1$ and $\<f,g\> \geq 0$).
\end{itemize}
\end{proposition}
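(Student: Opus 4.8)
The plan is to prove the slightly stronger statement that for every admissible vector $e$ (i.e. $\gcd(e)=1$ and $ab\neq0$) there is \emph{exactly one} direct acute basis $(f,g)$ with $f+g=e$; together with the fact that $f+g$ is always admissible, this gives the bijection. First I would check well-definedness of the forward map $(f,g)\mapsto f+g$: since $\det(f,g)=1$, the pair $(f,g)$ is a basis, so $f,g\neq0$ and $e:=f+g$ has coordinates $(1,1)$ in this basis, whence $\gcd(e)=1$. Moreover $\|e\|^2=\|f\|^2+\|g\|^2+2\langle f,g\rangle\geq\|f\|^2+\|g\|^2\geq 2$, using $\langle f,g\rangle\geq0$ and that $f,g$ are nonzero lattice vectors; thus $e$ is not a unit axis vector and $ab\neq0$. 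To parametrize the decompositions of a fixed admissible $e$, I note that $(f,g)\mapsto f$ identifies direct bases with $f+g=e$ with the integer solutions of $\det(f,e)=1$ (indeed $\det(f,e)=\det(f,f+g)=\det(f,g)$). Since $e$ is primitive, B\'ezout makes this solution set nonempty, and any two solutions differ by a multiple of $e$, so it equals $\{f_0+ke:\ k\in\Z\}$ for some $f_0$.

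Next I would reduce the acuteness condition $\langle f,e-f\rangle\geq0$ to a one-variable inequality by projecting onto $e$. Writing $\pi(f):=\langle f,e\rangle/\|e\|$ for the signed length of the projection, the relation $\det(f,e)=1$ is equivalent to $\langle f,e^\perp\rangle=-1$, so the $e^\perp$-component of every $f$ in the family has fixed norm $1/\|e\|$. Decomposing $f$ and $e-f$ along $e$ and $e^\perp$ then yields the two identities $\|f\|^2=\pi(f)^2+\|e\|^{-2}$ and $\langle f,e-f\rangle=\pi(f)\big(\|e\|-\pi(f)\big)-\|e\|^{-2}$. Along the family one has $\pi(f_0+ke)=\pi(f_0)+k\|e\|$, an arithmetic progression of step $\|e\|$, so exactly one member — call it $\bar f$, with $\bar g:=e-\bar f$ — satisfies $\pi(\bar f)\in[0,\|e\|)$.

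Finally I would show that $\bar f$ is the unique acute choice. For existence, $\bar f,\bar g$ are nonzero lattice vectors, so $\|\bar f\|^2,\|\bar g\|^2\geq1$; the norm identity then gives $\pi(\bar f)\geq\sqrt{1-\|e\|^{-2}}$ and $\|e\|-\pi(\bar f)\geq\sqrt{1-\|e\|^{-2}}$, whence $\pi(\bar f)\big(\|e\|-\pi(\bar f)\big)\geq 1-\|e\|^{-2}\geq\|e\|^{-2}$, and the second identity delivers $\langle\bar f,\bar g\rangle\geq0$. For uniqueness, any acute decomposition $(f,e-f)$ forces $\pi(f)\big(\|e\|-\pi(f)\big)\geq\|e\|^{-2}>0$, so both factors are positive and $\pi(f)\in(0,\|e\|)$, which by the step-$\|e\|$ spacing singles out $\bar f$. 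The crux of the argument — and the only place where the hypothesis $ab\neq0$ is genuinely used — is the inequality $1-\|e\|^{-2}\geq\|e\|^{-2}$, equivalently $\|e\|^2\geq2$, which is exactly what guarantees that the ``fundamental-domain'' candidate $\bar f$ is truly acute rather than merely lying in the slab between $0$ and $e$.
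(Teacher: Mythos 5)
Your proof is correct, but it takes a genuinely different route from the paper's for the key existence--uniqueness step. The forward direction (a direct acute basis sums to a primitive vector with both coordinates nonzero, via $\det(f,f+g)=\det(f,g)$ and $\|f+g\|^2\geq 2$) is the same in both. For the converse, the paper first reduces to the first quadrant by a rotation, then runs an explicit B\'ezout argument: it normalizes the B\'ezout coefficients by Euclidean division so that $g=(u,v)$ satisfies $0\leq u<a$, $0<v\leq b$, which forces both $f=(a-u,b-v)$ and $g$ to have non-negative coordinates --- so acuteness comes for free from signs --- and it rules out every other shift $g+ke$ by showing $f'$ and $g'$ would land in opposite quadrants, contradicting $\langle f',g'\rangle\geq 0$. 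You instead work coordinate-free: you identify the solution set of $\det(f,e)=1$ with the coset $f_0+\mathbb{Z}e$, project onto $e$, and turn acuteness into the one-variable inequality $\pi(f)\bigl(\|e\|-\pi(f)\bigr)\geq\|e\|^{-2}$, which the unique representative in the fundamental domain $\pi\in[0,\|e\|)$ satisfies (via the norm lower bounds $\|\bar f\|,\|\bar g\|\geq 1$) and no other member of the coset can. Your version avoids the rotation reduction and the coordinate sign bookkeeping, and it isolates precisely where the hypothesis $ab\neq 0$ (i.e.\ $\|e\|^2\geq 2$) enters; the paper's version is more elementary arithmetic and yields as a byproduct that $f,g$ lie in the same closed quadrant as $e$, a localization fact the paper reuses later (in proving the structure of the graph $\mathbb{T}$), which your argument does not record.
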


\begin{definition}
\label{def:OPlus}
We emphasize the unique dÃ©composition introduced in Proposition \ref{prop:Decomp} by using the notation 
$e = f \oplus g.$
Whenever we write $e = f \oplus g$, we implicitly limit our attention to vectors $e$ satisfying the assumptions of Proposition \ref{prop:Decomp}. 
\end{definition}

For instance $(7,5) = (3,2) \oplus (4,3)$, $(3,2) = (2,1)\oplus (1,1)$, and $(1,1) = (1,0) \oplus (0,1)$. 
If $e=f\oplus g$, then $(e,-f,-g)$ is a superbase of $\mZ^2$; all superbases happen to be of that form, up to a permutation of their elements, see Lemma \ref{lem:SuperbaseDecomp}.
The next proposition shows how to generate numerous decompositions of the form of Proposition \ref{prop:Decomp}.
\begin{proposition}
\label{prop:SumChildren}
If $e=f\oplus g$, then $f+e = f\oplus e$ and $e+g = e \oplus g$.
\end{proposition}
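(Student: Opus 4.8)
The plan is to recognize that both claims are nothing but instances of the defining property of the $\oplus$ operation. By Definition \ref{def:OPlus} together with the bijection of Proposition \ref{prop:Decomp}, writing $f+e = f \oplus e$ is \emph{by definition} the assertion that $(f,e)$ is a direct acute basis of $\Z^2$, and likewise $e+g = e\oplus g$ asserts that $(e,g)$ is a direct acute basis. So the entire proof reduces to checking two conditions — unit determinant and non-negative scalar product — for each of the two ordered pairs $(f,e)$ and $(e,g)$, using only the hypotheses $e=f+g$, $\det(f,g)=1$ and $\<f,g\>\geq 0$ that are packaged into the assumption $e = f\oplus g$.

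First I would handle the determinant (directness) condition, using bilinearity and antisymmetry of $\det$. Substituting $e = f+g$ gives $\det(f,e) = \det(f,f+g) = \det(f,f)+\det(f,g) = \det(f,g) = 1$, and symmetrically $\det(e,g) = \det(f+g,g) = \det(f,g)+\det(g,g) = 1$. This simultaneously confirms that both pairs are bases of $\Z^2$ and that their orientation is the correct (direct) one.

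Next I would verify acuteness, again expanding with $e=f+g$ and using bilinearity of the scalar product: $\<f,e\> = \<f,f\>+\<f,g\> = \|f\|^2 + \<f,g\> \geq 0$ and $\<e,g\> = \<f,g\> + \|g\|^2 \geq 0$, since $\|f\|^2,\|g\|^2 \geq 0$ and $\<f,g\>\geq 0$ by hypothesis. (In fact both quantities are strictly positive, as $f$ and $g$ are nonzero being members of a basis, but the weak inequality $\geq 0$ is all that the acuteness requirement demands.) Having established that $(f,e)$ and $(e,g)$ are direct acute bases, Proposition \ref{prop:Decomp} lets me read off the identities $f+e = f\oplus e$ and $e+g = e\oplus g$ directly from the bijection.

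The only point requiring a moment's thought — and the natural place to expect an obstacle — is that the $\oplus$ notation also silently demands that the sum vectors $f+e$ and $e+g$ belong to the admissible class of Proposition \ref{prop:Decomp}, namely primitive vectors ($\gcd = 1$) with both coordinates nonzero. I would resist re-deriving these properties by hand; the key observation is that they come for free, because Proposition \ref{prop:Decomp} is phrased as a \emph{one-to-one correspondence}, so the sum of any direct acute basis is automatically a vector of that admissible class. Thus once the two routine bilinearity computations above are carried out, no separate verification of primitivity or of non-vanishing coordinates is needed, and the proof is complete.
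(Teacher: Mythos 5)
Your proof is correct and takes essentially the same route as the paper: the paper likewise verifies $\det(f,e)=\det(f,f+g)=\det(f,g)=1$ and $\langle f,e\rangle = \|f\|^2+\langle f,g\rangle \geq 0$ to conclude that $(f,e)$ is a direct acute basis, and handles $(e,g)$ symmetrically. Your closing remark that primitivity and nonvanishing coordinates of $f+e$ and $e+g$ follow automatically from the bijection in Proposition \ref{prop:Decomp} is a point the paper leaves implicit, but it is the same argument.
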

\begin{proof}
We check $\det(f,e) = \det(f,f+g)=\det(f,g)=1$, and $\<f,e\> = \<f,f+g\> = \|f\|^2+\<f,g\> \geq 0$. Hence $(f,e)$ is a direct acute basis of $\Z^2$. Likewise for $(e,g)$.
\end{proof}

\begin{definition}
\label{def:GraphT}
We introduce a graph $\mT$, with vertices $\{e \in \Z^2; \, \gcd(e) = 1\}$, and edges $e \to f \oplus e$ and $e \to e \oplus g$ for each $e = f\oplus g$.
\end{definition}

We say that an edge $e \to e'$ of $\mT$ leaves from $e$ and arrives at $e'$.
We denote by $V_8$ the eight point stencil illustrated on Figure \ref{fig:Stencils} (left).
\begin{equation}
\label{eqdef:V8}
V_8 := \{(a,b) \in \{-1,0,1\}^2; \, a b \neq 0\}.
\end{equation}

\begin{proposition}
\label{prop:V8T}
The set $V_8$ has one element in each connected component of $\mT$.
The four points of the form $(\pm 1, 0)$ or $(0, \pm 1)$ are isolated. The four other points are the root of complete infinite binary trees, each one entirely contained in a quadrant of the plane. 
\end{proposition}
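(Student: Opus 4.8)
The plan is to treat the four axis points and the four quadrant roots separately, exploiting the rotational symmetry of $\mT$. First I would record that the $90^\circ$ rotation $\rho(a,b) := (-b,a)$ preserves both the determinant and the scalar product, so that $e = f \oplus g \iff \rho e = \rho f \oplus \rho g$; hence $\rho$ is an automorphism of $\mT$ that cyclically permutes the four points $(\pm1,\pm1)$ and the four axis points. It therefore suffices to analyze the connected component of $(1,1)$, the other three quadrant components being isomorphic copies lying in the respective quadrants. The four axis points are dispatched immediately: a vector with a vanishing coordinate (such as $(\pm1,0)$ or $(0,\pm1)$) cannot be written in the form $f\oplus g$ by Proposition \ref{prop:Decomp}, so by Definition \ref{def:GraphT} no edge leaves it, and since the head of any edge has the form $f\oplus e$ or $e\oplus g$, no edge arrives at it either; thus each of these four points is isolated.

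Write $Q := \{e=(a,b)\in\Z^2;\ a\ge 1,\ b\ge 1,\ \gcd(a,b)=1\}$ for the primitive vectors of the open first quadrant. The heart of the argument is the sub-lemma: \emph{if $e\in Q$ and $e=f\oplus g$, then $f$ and $g$ lie in the closed first quadrant} $\{x_1\ge 0,\ x_2\ge 0\}$. I would prove this by strong induction on $|e|_1=a+b$. For $e=(1,1)=(1,0)\oplus(0,1)$ it is clear. For the step one uses Proposition \ref{prop:SumChildren}: the recursion $e=f\oplus g \mapsto \{f\oplus e,\ e\oplus g\}$ is exactly the Stern--Brocot mediant rule, so the two $\oplus$-ancestors of $e$ are its two nearest Stern--Brocot ancestors; the one of larger $L^1$ norm is the parent of $e$ (its complementary $\oplus$-ancestor being shared with that parent), and the induction hypothesis, applied to the parent, whose $L^1$ norm is strictly smaller, forces both ancestors of $e$ to be non-negative. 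I expect this sub-lemma to be the main obstacle: it is precisely the classical positivity of Farey/Stern--Brocot parents, and the delicate point is to organize the induction so that the ancestors of $e$ are genuinely recovered from those of a strictly smaller vertex without circularity — equivalently, to identify the parent as the larger-norm ancestor and to check the corresponding $\oplus$-decomposition (the determinant condition is automatic, while acuteness reduces to $\<f,g\>\ge\|g\|^2$, respectively $\<f,g\>\ge\|f\|^2$).

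With the sub-lemma in hand, the component of $(1,1)$ is organized by the potential $|\cdot|_1$. Every $e\in Q$ meets the hypotheses of Proposition \ref{prop:Decomp}, so $e=f\oplus g$, and by Proposition \ref{prop:SumChildren} it has exactly two out-neighbours $f\oplus e=f+e$ and $e\oplus g=e+g$; the sub-lemma makes these strictly positive, hence again in $Q$, with $|f+e|_1=|e|_1+|f|_1>|e|_1$ and likewise for $e+g$. Thus edges strictly increase $|\cdot|_1$, so the component contains no cycle. For the in-degree, an in-neighbour $p$ of $e$ is one of the two ancestors $f,g$, and at most one can occur: the two parent conditions $\<f,g\>\ge\|g\|^2$ and $\<f,g\>\ge\|f\|^2$ cannot both hold, as they would force equality in Cauchy--Schwarz, contradicting $\det(f,g)=1$. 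For $e\neq(1,1)$ exactly one holds, giving a unique parent $p\in Q$ with $|p|_1<|e|_1$; for $(1,1)$ neither holds, since its ancestors $(1,0),(0,1)$ are axis points and have no out-edge, so $(1,1)$ is a root. Iterating the parent map strictly decreases $|\cdot|_1$ and hence terminates at $(1,1)$, so every element of $Q$ is reachable from $(1,1)$; combined with out-degree $2$ everywhere and in-degree $1$ away from the root, the component of $(1,1)$ is a complete infinite binary tree contained in $Q$, and applying $\rho$ transports this to each quadrant. Finally, every vertex of $\mT$ is either one of the four isolated axis points or lies in exactly one open quadrant, hence in exactly one of the four trees; therefore the connected components of $\mT$ are precisely these four singletons and four trees, each containing exactly one element of $V_8$, which establishes all three assertions.
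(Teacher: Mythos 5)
Your overall architecture is sound and in fact mirrors the paper's own proof in Appendix \ref{sec:ComponentsT}: rotational symmetry, isolation of the four axis points, out-degree two via Proposition \ref{prop:SumChildren}, in-degree at most one via the incompatibility of the two acuteness conditions (the paper's Lemma \ref{lem:AtMostOne}), and acyclicity via a strictly increasing norm. You also correctly identified that everything hinges on the sub-lemma that the ancestors $f,g$ of a first-quadrant vertex $e=f\oplus g$ lie in the closed first quadrant. The genuine gap is that your proposed induction for this sub-lemma is circular, exactly at the point you flagged but did not resolve. Your induction hypothesis quantifies over vertices of $Q$ (the open first quadrant), and in the inductive step you want to apply it to the parent, i.e.\ the larger-norm ancestor $f$ of $e$. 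But at that stage nothing is known about the signs of the coordinates of $f$: that $f$ lies in $Q$ is precisely an instance of the positivity being proved, so the hypothesis cannot be invoked. The $L^1$ descent has the same defect: $|f|_1<|e|_1$ follows from $e=f+g$ only once you know $f,g\geq 0$ (for sign-mixed summands the $L^1$ norm of a summand can exceed that of the sum). Identifying the parent as the larger-norm ancestor and verifying its $\oplus$-decomposition (this is the paper's Lemma \ref{lem:SubSum}, and your reduction of acuteness to $\langle f,g\rangle\geq\|g\|^2$ is correct) does not close this loop: it tells you the parent exists, not that the parent is in $Q$ or is $L^1$-smaller.

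The paper avoids the issue entirely: the quadrant confinement is never proved by induction on the tree, but is built into the existence half of the Bezout construction (Proposition \ref{prop:Quadrant} in Appendix \ref{sec:DecompIntegerVector}), where for $e=(a,b)$ with $a,b>0$ the Bezout pair is normalized to $g=(u,v)$ with $0\leq u<a$, $0<v\leq b$, so that $g$ and $f=e-g$ manifestly have non-negative coordinates; uniqueness then transfers this to \emph{the} decomposition. If you wish to keep your inductive route it can be repaired, but two changes are needed. First, replace the $L^1$ norm by the Euclidean norm, for which the descent is unconditional: $\|e\|^2=\|f\|^2+2\langle f,g\rangle+\|g\|^2\geq\|f\|^2+1$ because $\langle f,g\rangle\geq 0$ and $\|g\|\geq 1$. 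Second, strengthen the induction hypothesis to all four quadrants at once (``for every primitive $e$ with both coordinates nonzero, the ancestors of $e$ lie in the closed quadrant containing $e$''), so that it applies to the parent $f$ without knowing its quadrant in advance: $f$ is primitive with $\|f\|>1$, hence has both coordinates nonzero, the hypothesis places $f-g$ and $g$ (hence also $e=f+g$) in $f$'s closed quadrant, and since $e$ is interior to its own quadrant the two quadrants coincide. Alternatively, a short direct sign analysis using $\det(f,g)=1$, $\langle f,g\rangle\geq 0$ and $f+g=e$ componentwise positive proves the sub-lemma with no induction at all. As written, however, the central step of your argument does not go through.
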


\begin{figure}
\centering
\includegraphics[width=4.5cm]{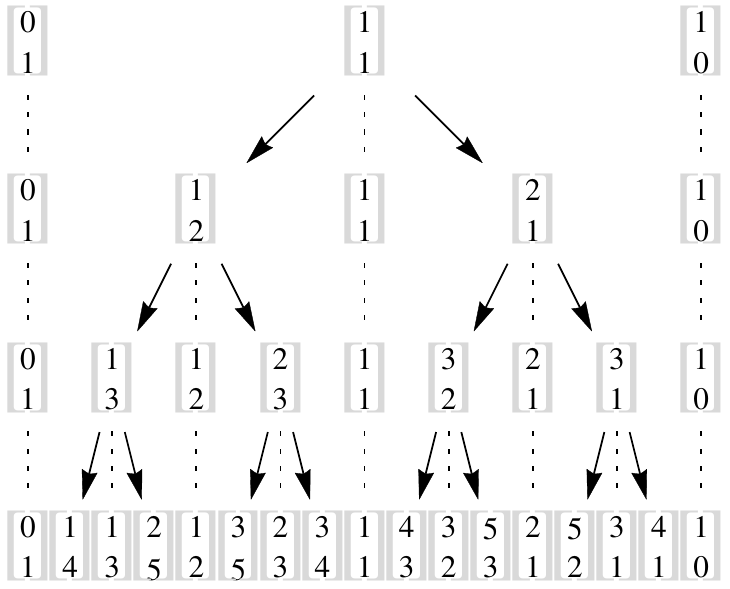} 
\hspace{0.1cm}
\includegraphics[width=3cm]{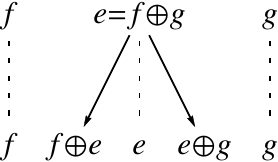} 
\hspace{0.1cm}
\includegraphics[width=3.8cm]{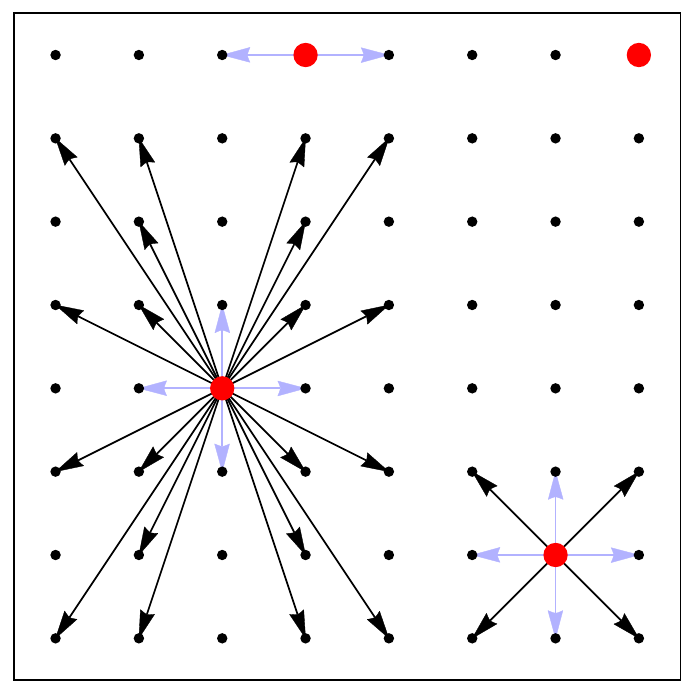} 
\includegraphics[width=3.8cm]{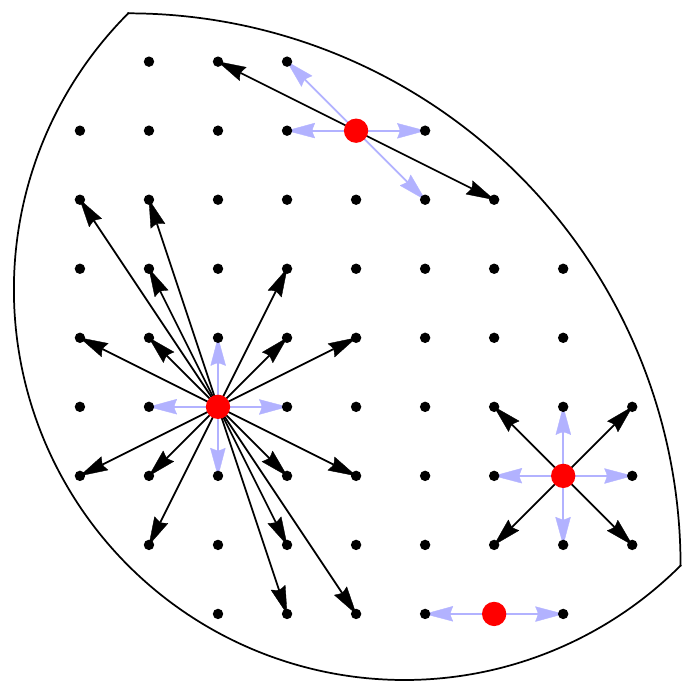}    
\caption{
Stern Brocot binary tree (left), and its local structure (center left). 
Right: some convex domains, and the associated discretization grids. Plain black arrows: set $\cV_\Omega(x)$, for some $x \in X$. Light blue arrows: vectors $e \in \mT \sm \cVin(x)$ such that $x \pm e \in \Omega$, as in property (Reachability) of stencils.
}
\label{fig:Vin}
\label{fig:SternBrocotTree}
\end{figure}


The Stern-Brocot tree is the subgraph corresponding to the first quadrant, with vertices $\mT^+ := \{e = (a,b) \in \mT;\, a>0,\, b>0\}$, see Figure \ref{fig:SternBrocotTree}.
This complete infinite binary tree originates from arithmetic, and in the literature a vertex $e = (a,b) \in \mT^+$ is often identified with the positive irreducible fraction $a/b$. 

The MA-LBR adaptive variant, presented below, requires stencils with a special structure. For each $x \in X$ we introduce the set 
\begin{equation}
\label{eqdef:Vin}
\cV_\Omega(x) := \{ e = f \oplus g; \, x \pm e, x \pm f, x \pm g \in \Omega\},
\end{equation}
where ``$x \pm e \in \Omega$'' stands for ``$x+e \in \Omega$ and $x-e \in \Omega$''.
Note that the continuous domain $\Omega$ could in \eqref{eqdef:Vin} be replaced with the discrete one $X := \Omega \cap \Z^2$: indeed for any $z \in \Z^2$, one has $z \in \Omega$ iff $z \in X$.
Any subset $V$ of $\mT$ is regarded as a subgraph of $\mT$, equipped with all edges of $\mT$ having their endpoints in $V$.
\begin{definition}
\label{def:Stencils}
A family of stencils $\cV$ is the data, for each $x \in X$, of a stencil $\cV(x)$ satisfying $V_8 \subset \cV(x) \subset \mT$, and the additional structural properties:
\begin{itemize}
\item (Hierarchy) The set $V_8$ has an element in each connected component of $\cV(x)$.
\item (Reachability) $\cV(x)$ contains each $e \in \mT \sm \cVin(x)$ such that $x \pm e \in \Omega$.
\end{itemize}
\end{definition}
Practical recommendations regarding the construction of stencils are discussed after Theorem \ref{th:Hierarchy}.
These structural constraints are in practice not hard to satisfy. 
Condition (Hierarchy) is natural in view of the Stern-Brocot tree structure, and is satisfied by all stencils illustrated on Figure \ref{fig:Stencils}.
Condition (Reachability) ensures that the stencils can be extended, in the sense of Proposition \ref{prop:VExtended} below. It is vacuous for all $x \in X$ at distance $\geq 1$ from $\partial \Omega$ in general, and entirely vacuous in the case of a box domain, see Proposition \ref{prop:ReachabilityVacuous} and Corollary \ref{corol:ReachabilityVacuousBox}. 


The sets $\cV_\Omega(x)$, $x \in X$, are small or empty when $x$ is close to $\partial \Omega$, but typically huge when $x$ is far from $\Omega$, see Figure \ref{fig:Vin}. 
They \emph{do not} constitute a family of stencils, but they can be used to extend an existing family of stencils, as in the next definition. 
\begin{definition}
To each family of stencils $\cV$, we associate the family of sets $\cVu$ defined by $\cVu(x) :=  \cV(x) \cup \cVin(x)$, $x \in X$.
\end{definition}

\begin{proposition}[Extension of stencils]
\label{prop:VExtended}
If $\cV$ is a family of stencils, then $\cVu$ also is. 
\end{proposition}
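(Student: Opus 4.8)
The plan is to verify that $\cVu$ meets the three requirements of Definition \ref{def:Stencils}: each $\cVu(x)$ is a stencil with $V_8 \subset \cVu(x) \subset \mT$, and the properties (Hierarchy) and (Reachability) hold. Two of these are essentially free, and the whole difficulty is concentrated in (Hierarchy); I would dispatch the easy parts first.

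First I would settle the stencil axioms and (Reachability). Finiteness of $\cVu(x) = \cV(x) \cup \cVin(x)$ follows from finiteness of $\cV(x)$ and of $\cVin(x)$, the latter because $\Omega$ is bounded, so only finitely many $e$ satisfy $x \pm e \in \Omega$. Symmetry of $\cVin(x)$ comes from the observation that if $e = f \oplus g$ then $-e = (-f) \oplus (-g)$ (indeed $\det(-f,-g) = \det(f,g) = 1$ and $\<-f,-g\> = \<f,g\> \geq 0$), while the defining conditions $x \pm e, x \pm f, x \pm g \in \Omega$ are invariant under negation; symmetry of $\cV(x)$ is assumed. Since elements of $\cVin(x)$ are of the form $f \oplus g$, hence primitive, we get $\cVin(x) \subset \mT$, and together with $V_8 \subset \cV(x) \subset \cVu(x)$ this yields $V_8 \subset \cVu(x) \subset \mT$. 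Property (Reachability) is inherited for free: the set $\{e \in \mT \sm \cVin(x) : x \pm e \in \Omega\}$ that it constrains depends only on $x$ and $\Omega$, is already contained in $\cV(x)$, and hence in the larger set $\cVu(x)$.

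The main obstacle is (Hierarchy): I must show every connected component of the subgraph $\cVu(x)$ meets $V_8$. Since $\cV(x) \subset \cVu(x)$, vertices lying in $\cV(x)$ are already handled by (Hierarchy) for $\cV$, so it suffices to show that each $e \in \cVin(x)$ lies in the same $\cVu(x)$-component as some element of $V_8$. I would prove this by induction on $\|e\|$, descending the Stern--Brocot tree, the base case being $e \in V_8$. For $e = f \oplus g \in \cVin(x) \sm V_8$, Propositions \ref{prop:SumChildren} and \ref{prop:V8T} produce a tree-parent $p \in \{f,g\}$ joined to $e$ by an edge of $\mT$; writing $e = p + w$ with the other summand $w \neq 0$ and $\<p,w\> \geq 0$ gives $\|p\|^2 < \|e\|^2$, and $p \in \{f,g\}$ forces $x \pm p \in \Omega$ by membership of $e$ in $\cVin(x)$. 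Now a dichotomy: if $p \in \cVin(x)$, the induction hypothesis connects $p$ to $V_8$; if $p \notin \cVin(x)$, then $p \in \mT \sm \cVin(x)$ with $x \pm p \in \Omega$, so (Reachability) for $\cV$ puts $p \in \cV(x)$, whose component meets $V_8$ by (Hierarchy) for $\cV$. In either case the $\mT$-edge $p \to e$ (which lies in $\cVu(x)$, both endpoints being present) attaches $e$ to a component meeting $V_8$, completing the induction.

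The crux, and the step I expect to require the most care, is the clean identification of the tree-parent $p$ as one of the two summands of $e$, with strictly smaller norm and with $x \pm p \in \Omega$ automatically available; this is precisely what lets (Reachability) for $\cV$ intervene in the case where the descent leaves $\cVin(x)$. I would extract the parent property from Proposition \ref{prop:V8T} (each component of $\mT$ is a binary tree rooted at a $V_8$ point, so a non-root $e$ has a unique parent) together with Proposition \ref{prop:SumChildren} (the children of $p$ are $p$ plus a summand of $p$, so the parent $p$ is itself a summand of $e$).
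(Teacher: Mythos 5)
Your proof is correct and takes essentially the same route as the paper's: both hinge on the fact that the unique Stern--Brocot parent $p$ of $e = f\oplus g \in \cVin(x)$ is one of the summands $f,g$ (Lemma \ref{lem:DecompEdge}), so that $x \pm p \in \Omega$, whence $p \in \cVu(x)$ either because $p \in \cVin(x)$ or because (Reachability) for $\cV$ puts $p \in \cV(x)$ (Lemma \ref{lem:AllSensibleDirections}). The only difference is presentational: the paper phrases this as closure of $\cVu(x)$ under $\mT$-parents and leaves the deduction of (Hierarchy) from that closure implicit, whereas you spell out the induction connecting each element to $V_8$, as well as the routine finiteness and symmetry checks.
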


We next introduce the MA-LBR operator $\cD_\cV$ associated to a family $\cV$ of stencils, as well as a hierarchical variant $\cDu_\cV$. (The expected $^\SB$ superscript is omitted for readability.)
\begin{equation}
\label{eqdef:DVux}
\cD_\cV u (x) := 
\min_{\substack{(e,f,g) \in \cV(x)^3\\ \text{superbase} } } \huefg.
\end{equation}

\begin{algorithm}
\caption{Hierarchical operator $\cDu_\cV u(x)$ (the final value of $\mD$).}
\label{algo:D2p}
\begin{tabular*}{\textwidth}{@{\extracolsep{\fill} } l r}
\textbf{Initialize} a variable $f \leftarrow (1,0)$, and list $G \leftarrow [(0,1), (-1,0)]$. Set also $\mD \leftarrow + \infty.$\\
\textbf{While} $G$ is non-empty \textbf{do}\\
\phantom{bla} Denote by $g$ the first element of $G$, and set $e := f+g$. \\
\phantom{bla} \textbf{If} $e \in \cV(x)$, or [$e \in \cVin(x)$ and $\Delta_e u (x) < \Delta_f u (x)+ \Delta_g u (x)$] & \hspace{-1cm}(Refinement test)\\
\phantom{blabla} \textbf{then} prepend $e$ to $G$, and set $\mD \leftarrow \min\{\mD, \, \huefg \}$ \\ 
\phantom{blabla} \textbf{else} remove $g$ from $G$ and set $f \leftarrow g$.
\end{tabular*}
\end{algorithm}

Our main result Theorem \ref{th:Hierarchy} states that the MA-LBR operator $\cD_\cVu$ associated to the large stencils $\cVu$ coincides in all cases of interest with the hierarchical, adaptive variant $\cDu_\cV$. Algorithm \ref{algo:D2p} amounts to a depth-first transversal of a finite subtree of the Stern-Brocot tree, see \S \ref{sec:Algorithm} and \cite{Mirebeau:2012wm} where a similar approach is used for the discretization of Hamilton-Jacobi PDEs. This subtree is characterized by the stopping criterion (Refinement test),
allowing to reject useless branches of $\mT$ where the minimum \eqref{eqdef:DVux} defining $\cD_\cVu u (x)$ cannot be attained. In the case of a quadratic map $u_M$, $M \in S_2^+$, Algorithm \ref{algo:D2p} explores a single branch of the Stern-Brocot tree, just as Selling's algorithm, see \S \ref{sec:PropagationOnTrees}.

We say that a property holds ``on $X$'' iff it holds at each point of $X$.

\begin{theorem}[Adaptive pruning equals extensive sweeping]
\label{th:Hierarchy}
Let $\cV$ be a family of stencils, and let $u \in \FX$. 
If $\cDu_\cV u >0$ on $X$, or $\cD_\cVu u >0$ on $X$, then we have $\cDu_\cV u=  \cD_\cVu u$ on $X$.
\end{theorem}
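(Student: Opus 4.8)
The plan is to recognize both $\cDu_\cV u(x)$ and $\cD_\cVu u(x)$ as minima of the single quantity $\huefg$ over superbases, and to show that the depth-first traversal of Algorithm \ref{algo:D2p} prunes only branches that cannot lower this minimum. By Lemma \ref{lem:SuperbaseDecomp} every superbase is, up to permutation and the symmetry $\Delta_{-e}=\Delta_e$, of the form $(e,-f,-g)$ with $e=f\oplus g$, so superbases are indexed by the vertices $e$ of $\mT$ together with their unique decomposition. Under this identification $\cD_\cVu u(x)$ is the minimum of $\huefg$ over all vertices $e=f\oplus g$ with $e,f,g\in\cVu(x)$, whereas the final value $\mD$ of Algorithm \ref{algo:D2p} is the minimum of the same expression over the vertices at which the (Refinement test) succeeds. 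By Proposition \ref{prop:V8T} the two trees rooted at the diagonal elements of $V_8$ that the algorithm explores exhaust all superbases up to the symmetry $e\mapsto -e$.

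First I would dispose of the easy inequality $\cDu_\cV u\ge \cD_\cVu u$. An induction on the loop shows that the current direction $f$ and the top element $g$ of $G$ always lie in $\cVu(x)$, and the test succeeds only when $e\in\cV(x)\subset\cVu(x)$ or $e\in\cVin(x)\subset\cVu(x)$; hence every superbase folded into $\mD$ is admissible for $\cD_\cVu$, and the algorithm minimizes over a subset of the admissible superbases. For the converse I would take a superbase $(e,f,g)$ attaining $\cD_\cVu u(x)$: if it is visited we are done, and otherwise the search abandoned some ancestor vertex $e'=f'\oplus g'$ where the test failed, whose harmlessness must be certified. There are two failure modes. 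Mode (F1), $e'\notin\cVu(x)$, is combinatorial: I would prove that non-membership propagates down $\mT$, so no admissible superbase can sit below $e'$. This rests on convexity of $\Omega$ together with the inclusion $\cVu(x)\supset\{e\in\mT;\ x\pm e\in\Omega\}$ furnished by the (Reachability) property of the extended family $\cVu$ (Proposition \ref{prop:VExtended}): a descendant of $e'$ lies in the cone generated by $f'$ and $g'$ and is ``further out'', so once $x\pm e'\notin\Omega$ the same holds for every descendant.

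Mode (F2), where $e'\in\cVin(x)\sm\cV(x)$ and $\Delta_{e'}u(x)\ge\Delta_{f'}u(x)+\Delta_{g'}u(x)$, is the analytic heart and I expect it to be the main obstacle. Here the pruned superbase is admissible, yet the value at $e'$ and at every vertex of its subtree must be shown to be bounded below by a value already recorded on the path from the root. This is precisely where positivity must be used in an essential way: the hypothesis $\cDu_\cV u>0$ (or $\cD_\cVu u>0$) forces the relevant second differences to be positive, so $\Delta^+_e=\Delta_e$ and the truncation in \eqref{eqdef:h} is inactive, and — more importantly — it must supply the discrete convexity that makes $\huefg$ non-decreasing along each Stern-Brocot branch beyond a vertex satisfying $\Delta_e u(x)\ge\Delta_f u(x)+\Delta_g u(x)$. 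I would attempt to establish this monotonicity directly from the two-piece formula \eqref{eqdef:h} and Lemma \ref{lem:hDiff}, verifying that passing from a superbase to either child $f\oplus e$ or $e\oplus g$ cannot decrease $h$ once the criterion $\Delta_e u(x)\ge\Delta_f u(x)+\Delta_g u(x)$ holds, so that the minimum over the pruned subtree is attained at its root. This is the discrete, non-quadratic counterpart of the energy-decrease argument behind Selling's Algorithm \ref{algo:Selling} and Proposition \ref{prop:convalg2}; the delicacy is that for a general $u$ the deeper second differences are a priori unconstrained, so a naive local argument does not preclude a drop further down the tree, and the positivity assumption — possibly through its global character ``on $X$'' — has to be exploited to rule this out. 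Tying together the combinatorics of $\mT$, the piecewise definition of $h$, and this positivity is the crux of the proof.
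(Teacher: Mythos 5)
Your architecture matches the paper's: the easy inequality $\cD_\cVu u\le\cDu_\cV u$, the reduction of both operators to minima of $\vp(e)=\huefg$ over subtrees of $\mT$ (this is Corollary \ref{corol:PositiveDifferences2} and Corollary \ref{corol:MinTree} in the paper), the harmlessness of your mode (F1) via ancestor-closedness of stencils, and the idea that $\vp$ should be increasing along branches beyond a vertex where the refinement test fails (Proposition \ref{prop:IncreasingMin} and Proposition \ref{prop:PhiInc}). But your mode (F2) — which you rightly call the crux — is left genuinely open, and the method you propose for it cannot work as stated. Knowing $H_{e'}u(x)\ge 0$ at the pruned vertex $e'$ gives the single inequality $\vp(e)\le\vp(e')$ at that vertex; to conclude that the minimum over the whole pruned subtree is not below, you need $H_{e''}u(x)\ge 0$ at \emph{every} descendant $e''$ of $e'$, and for a general $u\in\FX$ this simply does not follow from the two-piece formula for $h$, from Lemma \ref{lem:hDiff}, or from $H_{e'}u(x)\ge 0$ alone — exactly the "drop further down the tree" you acknowledge you cannot rule out locally.

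The missing mechanism in the paper has two pieces. First, Lemma \ref{lem:HHDD}: the telescoping identity $H_{e+f}u(x)=H_e u(x)+\Delta_f u(x+e)+\Delta_f u(x-e)$ (valid when $e,e+f\in\cVin(x)$), which shows that non-negativity of $H$ propagates from a vertex to its children \emph{provided} the second differences of $u$ in the direction $f$ are non-negative at the shifted grid points $x\pm e$. Second, Proposition \ref{prop:DifferencesPropagation}: since those shifted points are other points of $X$, the stencils at distinct points cannot be treated independently; the paper runs a decreasing induction on the total cardinality $\sum_{x\in X}\#(\cU(x))$ of the explored family $\cU$ (where $\cU(x)$ is the set of vertices passing the refinement test), each step adjoining a vector $e$ of minimal norm, so that minimality guarantees $f\in\cU(x\pm e)$ and hence $\Delta_f u(x\pm e)>0$ by discrete convexity (Proposition \ref{prop:PositiveDifferences}) applied at those neighboring points — this is precisely how the global hypothesis $\cDu_\cV u>0$ \emph{on all of} $X$ enters. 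Your proposal senses that the global character of the positivity must matter, but without the identity of Lemma \ref{lem:HHDD} and this simultaneous multi-point induction, the certification of mode (F2) — and hence the converse inequality — is not established.
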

In our experiments \S \ref{sec:Num} with $\cDu_\cV$, we use reasonably large stencils $\cV(x)$ on a layer of a few grid points along $\partial \Omega$, where $\cVin(x)$ is small or empty, see Figure \ref{fig:Vin}. We use in contrast the minimal stencils $V_8$ elsewhere since they are adaptively completed by Algorithm \ref{algo:D2p}.

The identity $\cDu_\cV u=  \cD_\cVu u$ may break down when these two operators vanish at some points of $X$. 
This is fortunately not an issue since (i) the problem of interest \eqref{eq:DiscreteSys} has by assumption a positive right hand side, and (ii), the positivity of the MA-LBR operator is equivalent to the positivity of second order differences, see Proposition \ref{prop:PositiveDifferences} below, which is a natural discrete counterpart of the convexity constraint present in the original Monge-Ampere problem \eqref{eq:MAD}.

Consider a smooth function $U : \Omega \to \R$, and a point $x \in \Omega$ such that $\det(\nabla^2 U (x)) > 0$. Then $U$ is convex (or concave) on a neighborhood of $x$. 
The next proposition establishes a discrete analog of this property.
Consider $u \in \FX$ and a family $\cV$ of stencils. The discrete counterpart of $\det(\nabla^2 U(x)) > 0$ is $\cD_\cV u(x) > 0$, while the counterpart of the convexity of $U$ locally around $x$ is the positivity of the second order differences centered at $x$: $\Delta_e u(x)>0$, $e \in \cV(x)$. 

\begin{proposition}[Discrete convexity]
\label{prop:PositiveDifferences}
Let $u \in \FX$, let $\cV$ be a family of stencils on $X$, and let $x \in X$. Then 
\begin{equation*}
\cD_\cV u(x) > 0 \quad \Longleftrightarrow \quad  \forall e \in \cV(x), \, \Delta_e u(x) > 0.
\end{equation*}
\end{proposition}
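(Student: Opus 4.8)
The plan is to reduce the equivalence to two independent ingredients: an elementary sign analysis of the function $h$ from \eqref{eqdef:h}, and a combinatorial claim that every stencil vector can be enrolled in a superbase drawn entirely from $\cV(x)$. First I would record the key property of $h$: for $a,b,c\ge 0$ one has $h(a,b,c)>0$ if and only if $a,b,c$ are all strictly positive. The forward direction is clear from \eqref{eqdef:h}: if some argument vanishes, say $a=0$, one falls in the product branch and $h$ equals a product containing the factor $0$. For the reverse direction I split along the two branches of \eqref{eqdef:h}: in the branch $a\ge b+c$ one gets $h=bc>0$, while in the remaining branch the value is $\tfrac14\bigl(2(ab+bc+ca)-(a^2+b^2+c^2)\bigr)$, and the defining inequalities $|b-c|<a<b+c$ together with $b,c>0$ force this quantity to be positive. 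Since $\Delta^+_e u(x)>0\iff\Delta_e u(x)>0$ and $\Delta_{-e}=\Delta_e$, for any superbase $(e,f,g)\in\cV(x)^3$ the value $\huefg$ is positive exactly when $\Delta_e u(x),\Delta_f u(x),\Delta_g u(x)>0$. As the minimum in \eqref{eqdef:DVux} runs over a nonempty finite family (it contains the superbases supported on $V_8$), I conclude that $\cD_\cV u(x)>0$ iff \emph{every} superbase $(e,f,g)\in\cV(x)^3$ has all three of $\Delta_e u(x),\Delta_f u(x),\Delta_g u(x)$ positive.

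With this reformulation the implication ``$\Leftarrow$'' is immediate. For ``$\Rightarrow$'' I must show that each individual $e\in\cV(x)$ belongs to some superbase contained in $\cV(x)^3$, which then forces $\Delta_e u(x)>0$. By the symmetry of stencils (Definition \ref{def:Stencil}) and Lemma \ref{lem:SuperbaseDecomp}, a superbase in $\cV(x)^3$ is the same data as a decomposition $e=f\oplus g$ with $e,f,g\in\cV(x)$; so the task becomes producing, for each $e\in\cV(x)$, such a decomposition inside $\cV(x)$. The four isolated vectors $(\pm 1,0),(0,\pm 1)$ and the four tree roots $(\pm 1,\pm 1)$ all lie in $V_8\subset\cV(x)$, and are covered by the four root decompositions, e.g.\ $(1,1)=(1,0)\oplus(0,1)$, whose three vectors lie in $V_8$; this single decomposition already witnesses $\Delta_{(1,1)}u(x),\Delta_{(1,0)}u(x),\Delta_{(0,1)}u(x)>0$.

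For the remaining vectors I would exploit the tree structure of $\mT$. By Proposition \ref{prop:V8T} each nontrivial connected component of $\mT$ is a binary tree carrying exactly one element of $V_8$, namely its root; combined with $V_8\subset\cV(x)$ and the (Hierarchy) property, this forces $\cV(x)$ to meet each such component in a connected subgraph containing the root, i.e.\ in a subtree closed under passage to parents. I then induct down this subtree from the root, maintaining the statement ``$e$ admits a decomposition $e=f\oplus g$ with $f,g\in\cV(x)$''. This holds at the root, where $f,g\in V_8$. If it holds at a vertex $p=f\oplus g$ and $e$ is a child of $p$ lying in $\cV(x)$, then by Definition \ref{def:GraphT} and Proposition \ref{prop:SumChildren} the decomposition of $e$ is either $f\oplus p$ or $p\oplus g$, whose factors again lie in $\cV(x)$. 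Thus the property propagates to every vertex of the subtree, hence to every $e\in\cV(x)$, and the argument concludes.

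The main obstacle is precisely this last combinatorial step: upgrading the hypothesis ``positivity on superbases'' to ``positivity on all of $\cV(x)$'' requires knowing that the structural constraints—(Hierarchy) together with $V_8\subset\cV(x)$—pin $\cV(x)$ down to a root-containing subtree of each Stern-Brocot component, and that descending a single tree edge only ever recombines a node with one of its own ancestors (so the new decomposition stays inside $\cV(x)$). Everything else, namely the sign analysis of $h$ and the bookkeeping between $\Delta^+_e$, $\Delta_e$ and $\pm e$, is routine.
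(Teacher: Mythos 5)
Your proof is correct and follows essentially the same route as the paper's, which splits the statement into Corollaries \ref{corol:PositiveDifferences1} and \ref{corol:PositiveDifferences2}: your sign characterization of $h$ is the content of Lemma \ref{lem:hDiff}, and your top-down induction showing that $\cV(x)$ is closed under passage to parents in each Stern-Brocot component is exactly Lemma \ref{lem:ReversedHierarchy} (proved there by induction on $\|e\|^2$), with the axis vectors $(\pm 1,0)$, $(0,\pm 1)$ handled via $V_8$ superbases in both arguments.
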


Oberman \cite{Oberman:2011wy} numerically addressed variational problems posed on the cone of convex functions by imposing the positivity of second order differences, $\Delta_e u(x)$ for all points $x \in X$ and all vectors $e$ within some given stencil $V$. It is also known, see Appendix A of \cite{Mirebeau:Vn5Iu9VK}, that any discrete map $u : X \to \R$ satisfying $\Delta_e u(x)\geq 0$ whenever $x, x\pm e \in X$, needs to coincide $u_{|X'} = U_{|X'}$ with a global convex function $U : \Omega \to \R$ on the subsampled grid $X' := X \cap (2 \Z^2)$ of points with even coordinates.

\begin{remark}
Adaptivity in PDE discretizations often refers to the context where a sequence $u_n$ of discrete maps is generated along an iterative procedure, as well as a sequence $\cD_n$ of operators, and $\cD_{n+1}$ depends on $u_n$. Our understanding in this paper is different: there is no underlying iteration, but a single operator $\cD_\cVu$ which is evaluated in a subtle and cheap way as $\cDu_\cV$.
\end{remark}

\section{Proof of consistency and minimality}
\label{sec:Local}

We establish the results announced in \S \ref{sec:LocalIntro}, and related to the MA-LBR consistency and optimal locality. Theorem \ref{th:Consistency} (Consistency) and Remark \ref{rem:Geom} are proved in \S \ref{sec:Minimality}, and Theorem \ref{th:Minimality} (Minimality) in \S \ref{sec:Minimality}.

\subsection{Consistency}
\label{sec:Consistency}

Our first result, Proposition \ref{prop:hSuperbase} preceded with a technical lemma, shows that the MA-LBR operator \eqref{eq:SB} systematically overestimates the hessian determinant of quadratic functions. For any $M\in S_2^+$, defining $u_M$ as in \eqref{eqdef:UM}, one has $\cD^\SB_V u_M \geq \det(M)$ on $X$. Equality holds iff $V$ contains an $M$-obtuse superbase, which establishes the announced Theorem \ref{th:Consistency} (Consistency). We denote
\begin{align}
\label{eqdef:K}
K &:= \{(a,b,c) \in \R_+^3; \, a \leq b+c, \, b\leq c+a, \, c\leq a+b\}.\\
\label{eqdef:h1}
h_1(a,b,c) &:= \frac 1 2 (ab+bc+ca)-\frac 1 4 (a^2+b^2+c^2).
\end{align}

\begin{lemma}
\label{lem:hh1}
Let $(a,b,c)\in \R_+^3$. Then $h(a,b,c) \geq h_1(a,b,c)$, with equality iff $(a,b,c) \in K$.
\end{lemma}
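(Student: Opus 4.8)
The plan is to reduce the entire statement to a single algebraic identity and then dispatch the two directions of the ``iff'' by symmetry. The only non-trivial branch in the definition \eqref{eqdef:h} of $h$ occurs when one of the three triangle-type inequalities fails; since both $h$ and $h_1$ are symmetric under permutations of $(a,b,c)$, I may assume without loss of generality that the failing one is $a \geq b+c$, in which case $h(a,b,c) = bc$ by definition.

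First I would record the key identity
\begin{equation*}
bc - h_1(a,b,c) = \tfrac14 (a-b-c)^2,
\end{equation*}
which follows by expanding $h_1$ from \eqref{eqdef:h1} and completing the square in $a$: namely $bc - \frac12(ab+bc+ca) + \frac14(a^2+b^2+c^2) = \frac14\big(a^2 - 2a(b+c) + (b+c)^2\big)$. This identity immediately yields $h(a,b,c) = bc \geq h_1(a,b,c)$ whenever $a \geq b+c$, and likewise under the permutations of $a,b,c$ covered by \eqref{eqdef:h}. Since on the complementary region $h = h_1$ by definition, the inequality $h \geq h_1$ holds throughout $\R_+^3$.

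For the characterization of equality I would argue as follows. If $(a,b,c) \in K$, then either all three inequalities defining $K$ are strict, so that $h = h_1$ directly from \eqref{eqdef:h}, or one of them is an equality, say $a = b+c$; the identity above then gives $bc = h_1(a,b,c)$, so again $h = h_1$ (this simultaneously confirms that the two branches of \eqref{eqdef:h} agree on the boundary $a=b+c$, i.e.\ that $h$ is well defined and continuous there). Conversely, if $(a,b,c) \notin K$, then one inequality fails strictly, say $a > b+c$, and the identity gives $h(a,b,c) - h_1(a,b,c) = \frac14(a-b-c)^2 > 0$, so the inequality is strict.

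I do not anticipate a genuine obstacle: the whole argument rests on the completion of the square, and the only points needing care are (i) invoking the permutation symmetry to fix a single failing inequality, and (ii) verifying that the two branches of \eqref{eqdef:h} are consistent on the boundary $a=b+c$ — both of which the displayed identity settles at once.
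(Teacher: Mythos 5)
Your proof is correct and follows essentially the same route as the paper's: reduce to the case $a \geq b+c$ by permutation symmetry and apply the completed-square identity $bc - h_1(a,b,c) = \frac14(a-b-c)^2$. The only difference is that you explicitly verify the two branches of \eqref{eqdef:h} agree on $\partial K$, a point the paper glosses over here (and checks later, in Lemma \ref{lem:hDiff}).
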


\begin{proof}
If $(a,b,c) \in K$ then $h(a,b,c)=h_1(a,b,c)$ by definition \eqref{eqdef:h}. Otherwise, we may assume without loss of generality that $a > b+c$, so that $h(a,b,c) = b c$ and $h(a,b,c) - h_1(a,b,c) = \frac 1 4 (a-b-c)^2 > 0$.
\end{proof}

\begin{proposition}
\label{prop:hSuperbase}
Let $M \in S_2^+$, let $(e_0, e_1, e_2)$ be a superbase of $\Z^2$, and let $\delta_i := \<e_i, M e_i\>$ for $0 \leq i \leq 2$. Then $h(\delta_0, \delta_1, \delta_2) \geq \det(M)$. Equality holds iff $(\delta_0, \delta_1, \delta_2) \in K$, equivalently iff $(e_0, e_1, e_2)$ is $M$-obtuse.
\end{proposition}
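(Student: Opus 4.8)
The plan is to prove the inequality $h(\delta_0,\delta_1,\delta_2) \geq \det(M)$ by a direct computation, exploiting the superbase relation $e_0+e_1+e_2 = 0$, and then to characterize the equality case via Lemma \ref{lem:hh1}. The key algebraic observation is that, because $(e_1,e_2)$ is a basis of $\Z^2$ with $|\det(e_1,e_2)|=1$, the three quantities $\delta_i = \<e_i, Me_i\>$ together with the cross terms $\<e_i,Me_j\>$ are tightly linked: expanding $0 = \<e_0+e_1+e_2, M(e_0+e_1+e_2)\>$ gives $\delta_0+\delta_1+\delta_2 = -2(\<e_0,Me_1\>+\<e_1,Me_2\>+\<e_2,Me_0\>)$. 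This lets me eliminate the off-diagonal terms in favor of the $\delta_i$.

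First I would express $\det(M)$ in terms of the superbase. Since $|\det(e_1,e_2)|=1$, a change-of-variable identity gives $\det(M) = \<e_1,Me_1\>\<e_2,Me_2\> - \<e_1,Me_2\>^2 = \delta_1\delta_2 - \<e_1,Me_2\>^2$. So everything reduces to controlling the single off-diagonal term $m_{12} := \<e_1,Me_2\>$. Next I would carry out the analogous reduction on the MA-LBR formula: assuming first that $(\delta_0,\delta_1,\delta_2)\in K$, so that $h = h_1$, I would show by direct expansion that
\begin{equation*}
h_1(\delta_0,\delta_1,\delta_2) = \delta_1\delta_2 - m_{12}^2 = \det(M),
\end{equation*}
using $\delta_0 = \<e_0,Me_0\> = \<e_1+e_2, M(e_1+e_2)\> = \delta_1+\delta_2+2m_{12}$ to substitute for $\delta_0$ inside $h_1$ and watching the quadratic terms collapse. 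This is the computational heart of the argument, and it simultaneously shows that whenever the superbase is $M$-obtuse we get exact equality.

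The $M$-obtuseness condition $\<e_i,Me_j\>\leq 0$ for $i<j$ is what connects to membership in $K$: I would verify that $(\delta_0,\delta_1,\delta_2)\in K$ is \emph{equivalent} to $(e_0,e_1,e_2)$ being $M$-obtuse. Indeed, each triangle inequality such as $\delta_0 \leq \delta_1+\delta_2$ rewrites, after substituting $\delta_0 = \delta_1+\delta_2+2m_{12}$, as $m_{12}\leq 0$, i.e. $\<e_1,Me_2\>\leq 0$; the other two inequalities give the remaining obtuseness conditions symmetrically. Combined with the computation above, this handles the equality case entirely. For the strict inequality when $(\delta_0,\delta_1,\delta_2)\notin K$, I would invoke Lemma \ref{lem:hh1} to write $h(\delta_0,\delta_1,\delta_2) > h_1(\delta_0,\delta_1,\delta_2)$, and then show separately that $h_1(\delta_0,\delta_1,\delta_2) \geq \det(M)$ still holds off $K$ — here the same identity $h_1 = \delta_1\delta_2 - m_{12}^2 = \det(M)$ actually gives equality for $h_1$ regardless of whether we are in $K$, so that $h \geq h_1 = \det(M)$ with equality in the first inequality exactly when $(\delta_0,\delta_1,\delta_2)\in K$.

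The main obstacle I anticipate is purely bookkeeping: making the substitution $\delta_0 = \delta_1+\delta_2+2m_{12}$ cleanly inside $h_1$ and confirming that the resulting polynomial in $\delta_1,\delta_2,m_{12}$ collapses \emph{exactly} to $\delta_1\delta_2 - m_{12}^2$, with no residual terms, since a sign error there would break both the inequality and the equality characterization. The geometric input — that $|\det(e_1,e_2)|=1$ makes $\det(M)$ equal to the $2\times 2$ Gram-type determinant $\delta_1\delta_2 - m_{12}^2$ — is the one non-routine structural fact, and once it is in place the rest is algebra guided by the superbase constraint.
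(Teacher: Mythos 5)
Your proposal is correct, and it follows the same skeleton as the paper's proof: reduce everything to Lemma \ref{lem:hh1}, prove that $h_1(\delta_0,\delta_1,\delta_2)=\det(M)$ holds identically for \emph{every} superbase (obtuse or not), and identify membership of $(\delta_0,\delta_1,\delta_2)$ in $K$ with $M$-obtuseness through the relation $\delta_i-\delta_j-\delta_k = 2\<e_j,Me_k\>$. The only divergence is in how the central identity $h_1(\delta)=\det(M)$ is verified. The paper checks it for the canonical superbase $(-1,-1),(1,0),(0,1)$, where $h_1$ visibly equals $M_{11}M_{22}-M_{12}^2$, and then transports the computation to an arbitrary superbase via the unimodular change of variables $M\mapsto A^\trans M A$, using $\det(A^\trans M A)=\det(M)$. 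You instead set $m_{12}:=\<e_1,Me_2\>$, eliminate $\delta_0=\delta_1+\delta_2+2m_{12}$ inside $h_1$, and expand; this does collapse with no residual terms, since $2(ab+bc+ca)-(a^2+b^2+c^2)=4(bc-m^2)$ whenever $a=b+c+2m$, giving $h_1(\delta)=\delta_1\delta_2-m_{12}^2$, which equals $\det(M)$ by the Gram-determinant identity --- again because $|\det(e_1,e_2)|=1$. The two verifications are equivalent in substance, both hinging on unimodularity of the basis: yours is a single self-contained polynomial expansion, while the paper's invariance argument avoids any expansion at the cost of a case reduction. Either route is complete and sound.
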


\begin{proof}
Given a permutation $\{i,j,k\}$ of $\{0,1,2\}$ we compute 
\begin{equation*}
\delta_i - \delta_j -\delta_k = \<e_j+e_k, M (e_j+e_k)\> - \<e_j, M e_j\> - \<e_k, M e_k\> = 2\<e_j, M e_k\>.
\end{equation*}
Hence $(\delta_0, \delta_1, \delta_2) \in K$ iff the superbase $(e_0, e_1, e_2)$ is $M$-obtuse. We prove in the following that $h_1(\delta_0, \delta_1, \delta_2) = \det(M)$, which in view of Lemma \ref{lem:hh1} concludes the proof.

Special case of the superbase $f_0 := (-1,-1)$, $f_1:=(1,0)$, $f_2:=(0,1)$, with $\mu_i := \<f_i, M f_i\>$. We get $\mu_0 = M_{11}+2M_{12}+M_{22}$, $\mu_1=M_{11}$, $\mu_2 = M_{22}$. Inserting this into the expression \eqref{eqdef:h1}, yields as announced $h_1(\mu_0, \mu_1, \mu_2) = M_{11} M_{22} - M_{12}^2 = \det(M)$.

General case. Let $A$ be a matrix such that $A f_1= e_1$ and $A f_2 = e_2$, so that by linearity $A f_0 = e_0$. Note that $|\det(A)|=|\det(e_1,e_2)/\det(f_1,f_2)|=1$. We obtain $\delta_i = \<f_i, A^\trans M A f_i\>$, for all $0\leq i \leq 2$, so that by the special case $h_1(\delta_0,\delta_1,\delta_2) = \det(A^\trans M A) = \det(M)$.
\end{proof}

The rest of this subsection is devoted to the proof of Remark \ref{rem:Geom}. 
Let $(e_0,e_1,e_2)$ be a fixed superbase of $\mZ^2$. For each $\delta = (\delta_0,\delta_1,\delta_2) \in \mR_+^3$ we introduce a polygon $H(\delta)$, defined by linear inequalities, and some of its edges $E_i(\delta)$, $1 \leq i \leq 3$
\begin{align*}
H(\delta) &:= \{l \in \mR^2; \forall \, 1 \leq i \leq 3, \ |\<l, e_i\>| \leq \delta_i\}.\\
E_i(\delta) &:= \{l \in H(\delta); \, \<l, e_i\> = \delta_i\}.
\end{align*}
The area of $H(\delta)$ is computed in Corollary \ref{corol:SubGradientArea}, and this polygon (properly scaled and translated) is identified with a subgradient set in Proposition \ref{prop:SubgradientSet}, concluding the proof of Remark \ref{rem:Geom}. The proof unfortunately gives little geometric insight, hence it could be skipped at first reading.
Given $A \subset \mR^n$, $x \in \mR^n$ and $\alpha \in \mR$ we use the notation $x+\alpha A := \{x+\alpha a; a\in A\}$.
\begin{lemma}
\label{lem:EdgeLengths}
Let $\delta \in \mR_+^3$. Then $E_0(\delta)$ is a segment of length (i) $(\delta_1+\delta_2-\delta_0) \|e_0\|$ if $\delta \in K$, (ii) $2  \delta_2 \|e_0\|$ if $\delta_1 \geq \delta_0+\delta_2$, (iii) $2 \delta_1 \|e_0\|$ if $\delta_2 \geq \delta_0+\delta_1$, or (iv) $0$ if $\delta_0 > \delta_1+\delta_2$ (a case where $E_0(\delta)$ is in fact empty).
\end{lemma}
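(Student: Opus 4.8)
The plan is to analyze the polygon $H(\delta) = \{l \in \mathbb{R}^2; |\langle l, e_i\rangle| \leq \delta_i, 1 \leq i \leq 3\}$ by focusing on the edge $E_0(\delta)$, which lies on the line $\langle l, e_0\rangle = \delta_0$. The key structural fact I would exploit is that $(e_0, e_1, e_2)$ is a superbase, so $e_0 + e_1 + e_2 = 0$ and $(e_1, e_2)$ is a basis of $\mathbb{Z}^2$ with $|\det(e_1, e_2)| = 1$. First I would parametrize the line $\{\langle l, e_0\rangle = \delta_0\}$: since $e_0^\perp$ spans its direction, write $l = l_0 + t\, e_0^\perp$ for a fixed base point $l_0$ on the line and $t \in \mathbb{R}$. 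The segment $E_0(\delta)$ is then cut out by the two remaining constraints $|\langle l, e_1\rangle| \leq \delta_1$ and $|\langle l, e_2\rangle| \leq \delta_2$, each of which becomes a pair of affine inequalities in the single scalar $t$.

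The main computation is to track the four constraint lines $\langle l, e_1\rangle = \pm\delta_1$ and $\langle l, e_2\rangle = \pm\delta_2$ as they intersect the line $\langle l, e_0\rangle = \delta_0$. I would compute $\langle l_0 + t e_0^\perp, e_i\rangle$ for $i = 1, 2$; the coefficient of $t$ is $\langle e_0^\perp, e_i\rangle = \det(e_0, e_i)$, which equals $\pm 1$ for a superbase (using $e_0 = -(e_1+e_2)$, one gets $\det(e_0, e_1) = \det(e_2, e_1) = -1$ and $\det(e_0, e_2) = \det(e_1, e_2) = 1$, up to the sign of the chosen orientation). Thus each constraint confines $t$ to an interval of the real line, and $E_0(\delta)$ corresponds to the intersection of these two intervals, translated so its endpoints are expressed through $\delta_0, \delta_1, \delta_2$. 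Converting the resulting $t$-length to an actual Euclidean length introduces the factor $\|e_0^\perp\| = \|e_0\|$, which explains the $\|e_0\|$ appearing in every case of the statement.

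The case analysis (i)--(iv) then falls out from comparing the endpoints of the two $t$-intervals. Writing the admissible $t$-set as $[\alpha_1, \beta_1] \cap [\alpha_2, \beta_2]$ with endpoints linear in the $\delta_i$, the intersection is nonempty iff a triangle-inequality-type condition holds; the generic overlap length works out to $\delta_1 + \delta_2 - \delta_0$ (case (i), the regime $\delta \in K$), while if one $\delta$ dominates — say $\delta_1 \geq \delta_0 + \delta_2$ — the $E_1$ constraint becomes inactive on the relevant line and the segment length is governed solely by the $E_2$ constraint, yielding $2\delta_2$ (case (ii)), and symmetrically $2\delta_1$ in case (iii). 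When $\delta_0 > \delta_1 + \delta_2$ the two intervals are disjoint, so $E_0(\delta)$ is empty and the length is $0$ (case (iv)).

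The step I expect to be the main obstacle is keeping the bookkeeping of signs and orientations consistent: the precise values $\det(e_0, e_i) = \pm 1$ depend on the ordering and orientation of the superbase, and one must verify that the symmetric constraints $\pm\delta_i$ absorb these signs so that the final length formulas are orientation-independent. I would handle this cleanly by reducing, via an area-preserving linear change of coordinates, to the canonical superbase $e_0 = (-1,-1)$, $e_1 = (1,0)$, $e_2 = (0,1)$ used already in Proposition \ref{prop:hSuperbase}; since such a map sends $\mathbb{Z}^2$ to $\mathbb{Z}^2$ and preserves Euclidean lengths only up to the factor already accounted for by $\|e_0\|$, some care is needed, so it may be cleaner to argue directly with the parametrization above rather than normalizing the superbase. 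Once the signs are pinned down, each case is an elementary one-dimensional interval-intersection computation.
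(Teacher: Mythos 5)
Your proposal is correct and is essentially the paper's own argument: the paper likewise parametrizes the line $\<l,e_0\>=\delta_0$ along $e_0^\perp$, uses $\<e_0^\perp,e_i\>=\det(e_0,e_i)=\pm 1$ together with the superbase relation $e_1+e_2=-e_0$ to turn the two remaining constraints into symmetric intervals of half-lengths $\delta_1,\delta_2$ whose centers are offset by $\delta_0$, and then reads off the four cases from the interval intersection, with the factor $\|e_0\|$ converting parameter length to Euclidean length. The only blemish is your sign slip $\det(e_0,e_1)=\det(e_2,e_1)$ (it should be $-\det(e_2,e_1)$), but as you note the symmetric constraints $\pm\delta_i$ make the result orientation-independent, so this does not affect the argument.
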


\begin{proof}
Let $x,y \in \mR^2$ and let $l := x e_0+y e_0^\perp$. One has $l \in E_0(\delta)$ iff 
\begin{equation*}
\<x e_0 + y e_0^\perp, e_0\> = \delta_0, \text{ and } |\<x e_0 + y e_0^\perp, e_i\>| \leq \delta_i, \ i \in \{1,2\}.
\end{equation*}
The equality is equivalent to $x = \delta_0/\|e_0\|^2$. Recall that $\<e_0^\perp,e_1\>= \det(e_0,e_1)=1$, and likewise $\<e_0^\perp,e_2\>= \det(e_0,e_2)=-1$. Hence the  inequalities respectively hold iff $y$ belongs to the segment 
\begin{equation*}
S_1 := -\frac{\<e_0,e_1\>\delta_0}{\|e_0\|^2} + [-\delta_1,\delta_1], \qquad S_2 := \frac{\<e_0,e_2\>\delta_0}{\|e_0\|^2} + [-\delta_2,\delta_2].
\end{equation*}
Translating these two segments by $\<e_0,e_1\>\delta_0/\|e_0\|^2$ yields $F_1 = [-\delta_1,\delta_1]$, and $F_2 = [-\delta_2-\delta_0, \delta_2-\delta_0]$. Finally the length of $E_\delta$ is 
\begin{equation}
\|e_0\| \times \length(F_1 \cap F_2) = \|e_0\| \left(\min\{\delta_1,\delta_2-\delta_0\} + \min\{\delta_1, \delta_2+\delta_0\}\right)_+,
\end{equation}
which coincides with the announced result.
\end{proof}


\begin{corollary}
\label{corol:SubGradientArea}
For any $\delta = (\delta_0, \delta_1, \delta_2)\in \mR^+_3$, one has 
$\Area(H(\delta)) = 4h(\delta)$. 
\end{corollary}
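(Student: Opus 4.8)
The goal is to compute the area of the hexagon (or degenerate polygon) $H(\delta)$ defined by the three slab constraints $|\<l,e_i\>|\leq\delta_i$, $1\leq i\leq 3$, and match it against $4h(\delta)$. Lemma \ref{lem:EdgeLengths} already hands me the length of the edge $E_0(\delta)$, and by the symmetry of the superbase the analogous formulas hold for $E_1(\delta)$ and $E_2(\delta)$ (apply the lemma after relabelling the superbase, using $e_0+e_1+e_2=0$). So the raw ingredients are in place; the remaining work is geometric bookkeeping: assembling the edge-length data into an area.

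The cleanest approach is to split on whether $\delta\in K$ or not, since $h(\delta)$ is defined piecewise exactly along that boundary. First I would treat the \emph{generic case} $\delta\in K$. Here $H(\delta)$ is a genuine (centrally symmetric) hexagon with all three pairs of edges present, the edge $E_0$ having length $(\delta_1+\delta_2-\delta_0)\|e_0\|$ and similarly for the permutations. I would compute the area by the standard trick for a polygon defined as an intersection of slabs: decompose $H(\delta)$ into triangles from the origin to each edge. Since the origin is the center and each edge $E_i$ lies on the supporting line $\<l,e_i\>=\delta_i$ at (signed) distance $\delta_i/\|e_i\|$ from the origin, the triangle over edge $E_i$ has area $\tfrac12\cdot\mathrm{length}(E_i)\cdot \delta_i/\|e_i\|$. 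Summing the two parallel copies of each, the $\|e_i\|$ factors cancel neatly, and I expect
\[
\Area(H(\delta)) = \sum_{i=0}^{2} \delta_i\,(\delta_j+\delta_k-\delta_i),
\]
where $\{i,j,k\}=\{0,1,2\}$, and a short expansion of the right-hand side recovers exactly $2(\delta_0\delta_1+\delta_1\delta_2+\delta_2\delta_0)-(\delta_0^2+\delta_1^2+\delta_2^2) = 4h_1(\delta) = 4h(\delta)$. This is where care is needed: getting the signs and the cancellation of the $\|e_i\|$ factors right, and verifying that the origin-to-edge triangle decomposition genuinely tiles $H(\delta)$ without overlap in the centrally symmetric hexagon.

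For the \emph{degenerate case}, say $\delta_0>\delta_1+\delta_2$ (whence $\delta\notin K$ and $h(\delta)=\delta_1\delta_2$), Lemma \ref{lem:EdgeLengths}(iv) shows $E_0(\delta)=\emptyset$, so the $e_0$-slab is inactive and $H(\delta)$ is just the parallelogram $\{|\<l,e_1\>|\leq\delta_1,\ |\<l,e_2\>|\leq\delta_2\}$. Its area is $(2\delta_1)(2\delta_2)/|\det(e_1,e_2)| = 4\delta_1\delta_2 = 4h(\delta)$, using $|\det(e_1,e_2)|=1$ since $(e_1,e_2)$ is a basis. This matches, and by the permutation symmetry of both $H$ and $h$ the other two degenerate cases follow identically. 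The main obstacle is really just the first case: ensuring the triangulation-from-center argument is watertight and that the algebra collapses to $4h_1$; once that identity is in hand and continuity across the boundary $\partial K$ is observed (both expressions agree there, consistent with Lemma \ref{lem:hh1}), the corollary follows.
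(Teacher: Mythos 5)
Your proposal is correct and follows essentially the same route as the paper: decompose $H(\delta)$ into the (possibly empty) triangles $\Hull(E_i(\delta)\cup\{0\})$ and their opposites, each of area $\frac12 \cdot \length(E_i(\delta)) \cdot \delta_i/\|e_i\|$, then use Lemma \ref{lem:EdgeLengths} and the algebraic identity $\sum_i \delta_i(\delta_j+\delta_k-\delta_i) = 4h_1(\delta)$. Your handling of the case $\delta\notin K$ via the parallelogram area $(2\delta_1)(2\delta_2)/|\det(e_1,e_2)|$ is a harmless variant of the paper's uniform triangle bookkeeping, and both give $4\delta_1\delta_2=4h(\delta)$.
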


\begin{proof}
The triangle $\Hull(E_i(\delta) \cup \{0\})$ has area $\frac 1 2 \times \frac {\delta_i} {\|e_i\|} \times \length(E_i(\delta))$: half the height from the vertex at the origin, times the length of the opposite side. These three (possibly empty) triangles, with their opposites, partition $H(\delta)$. From this point the result follows from Lemma \ref{lem:EdgeLengths} and an easy calculation.
\end{proof}

\begin{proposition}
\label{prop:SubgradientSet}
Let $M \in S_2^+$, and let $\delta_i := \<e_i, M e_i\>$, $1 \leq i \leq 3$. Let $x \in \mZ^2$, and let $U$ be the maximal convex map bounded above by $u_M$ at the points $x$ and $x \pm e_i$, $1 \leq i \leq 3$. Then $\partial U(x) = M x + \frac 1 2 H(\delta)$.
\end{proposition}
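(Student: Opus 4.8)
The plan is to bypass any global, piecewise-linear description of the map $U$ and instead read off the subgradient $\partial U(x)$ directly from the extremal property defining $U$, namely that $U$ is the pointwise-largest convex function satisfying $U \le u_M$ at the seven sample points $x$ and $x\pm e_i$. Two elementary facts about this extremal construction will do all the work. First, the tangent plane $t(y) := u_M(x) + \<Mx, y-x\>$ of the convex quadratic $u_M$ at $x$ lies below $u_M$ everywhere, hence in particular satisfies the sample constraints, so it is an admissible competitor and $t \le U$; combined with the constraint $U(x)\le u_M(x)$ this forces $U(x) = u_M(x)$. Second, for any affine function $\ell$ one has $\ell \le U$ on all of $\mR^2$ \emph{if and only if} $\ell$ respects the sample constraints $\ell(p)\le u_M(p)$ for $p\in\{x, x\pm e_i\}$: the forward direction follows by evaluating at the sample points and using $U\le u_M$ there, and the converse holds because any such $\ell$ is itself an admissible convex competitor, hence $\le U$ by maximality.

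With these two facts in hand I would translate the subgradient condition into the sample constraints. By definition $p\in\partial U(x)$ means $U(y)\ge U(x) + \<p, y-x\>$ for all $y$, i.e.\ the affine map $\ell_p(y) := u_M(x) + \<p, y-x\>$, which already satisfies $\ell_p(x) = u_M(x) = U(x)$, lies below $U$ everywhere. By the second fact above this is equivalent to $\ell_p(x\pm e_i)\le u_M(x\pm e_i)$ for every $i$, the constraint at $x$ being automatic. Expanding $u_M(x\pm e_i) = u_M(x)\pm\<Mx, e_i\> + \tfrac12\delta_i$ via \eqref{eqdef:UM} together with the symmetry of $M$, and $\ell_p(x\pm e_i) = u_M(x)\pm\<p, e_i\>$, the six inequalities collapse to $|\<p-Mx, e_i\>|\le\tfrac12\delta_i$ for $1\le i\le 3$. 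Writing $q := p-Mx$, this says exactly that $2q\in H(\delta)$, that is $p\in Mx + \tfrac12 H(\delta)$, which is the claimed identity.

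The only genuinely delicate point is the second fact: that membership $\ell\le U$ can be tested against the finitely many sample points alone. This is where the \emph{maximality} of $U$ is essential, and one must confirm that the supremum of all admissible convex competitors is again an admissible competitor (convexity and the finitely many upper bounds are both preserved under pointwise suprema), so that $U$ really is this supremum and every constrained affine function lies below it. Once this is secured, no explicit triangulation of $U$, and not even the superbase relation among the $e_i$, is needed for this proposition: the subgradient at the central point $x$ is dictated entirely by the pairwise constraints along the directions $\pm e_i$. I would close by noting that $x$ lies in the interior of the convex hull of the sample points, since the $e_i$ contain a basis of $\mZ^2$, so $\partial U(x)$ is the bounded polygon $Mx + \tfrac12 H(\delta)$.
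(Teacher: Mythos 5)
Your proof is correct and follows essentially the same route as the paper's: both translate $g \in \partial U(x)$ into the finitely many affine constraints at the sample points $x \pm e_i$ and expand them to recover the inequalities $|\<g - Mx, e_i\>| \leq \tfrac12 \delta_i$ defining $Mx + \tfrac12 H(\delta)$. If anything, you are more explicit than the paper, which leaves the maximality argument (your ``second fact'') and the identity $U(x) = u_M(x)$ implicit in its chain of equivalences.
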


\begin{proof}
For any $g \in \mR^2$, the following are equivalent:
\begin{itemize}
\item $g \in \partial U(x)$ 
\item $U(x) + \<g, p-x\> \leq U(p)$, for all points $p$ of the hexagon of vertices $x \pm e_i$, $1 \leq i \leq 3$.
\item $u_M(x) + \<g,p-x\> \leq u_M(p)$, for all $p = x+ \ve e_i$, $\ve \in \{-1,1\}$, $1 \leq i \leq 3$. 
\end{itemize}
In order to further simplify this expression, we write $g = M x+l/2$, $p = x+e$, where $e = \pm e_i$, $1 \leq i \leq 3$, and insert the expression \eqref{eqdef:UM} of $u_M$. The following are then equivalent:
\begin{align*}
u_M(x) + \<g,p-x\> &\leq u_M(p), \\
\<x, M x\> + 2 \<Mx + l/2, (x+e)-x\> &\leq \<x+e, M (x+e)\>,\\
\<l, e\> &\leq \<e, M e\>.
\end{align*}
We recognize the inequalities defining $H(\delta)$, and the announced result follows.
\end{proof}

\begin{proof}[Proof of remark \ref{rem:Geom}]
Proposition \ref{prop:SubgradientSet} and Corollary \ref{corol:SubGradientArea} imply as announced that $\Area( \partial U(x) ) = \frac 1 4 \Area(H(\delta)) = h(\delta)$. 
By Proposition \ref{prop:hSuperbase} one has $h(\delta)=\det M$ iff $\delta \in K$, which by Lemma \ref{lem:EdgeLengths} means that $H(\delta)$ is an hexagon: each edge $E_i(\delta)$ has a positive length (we exclude here for simplicity the limit case $\delta \in \partial K$).

The map $U$ is polygonal on a triangulation with vertices $x$, $x\pm e_i$, $1 \leq i \leq 3$, which is symmetric with respect to $x$. Only four such triangulations exist, as illustrated on Figure \ref{fig:Subgradient}, and only the first one leads to an hexagonal subgradient $\partial U(x)$, since the subgradient has one vertex for each triangle containing $x$. This concludes the proof.
\end{proof}

%
%
%

\subsection{Minimality}
\label{sec:Minimality}

We prove Theorem \ref{th:Minimality} (Minimality), on the optimal locality of the MA-LBR. For that purpose we introduce some definitions, and establish in Proposition \ref{prop:ObtuseNorm} a minimality property of obtuse superbases.

\begin{definition}
\label{def:TrigConsecutive}
We denote by $\Cone(f,g) := \{\alpha f + \beta g;\, \alpha, \beta \in \R_+\}$, the closed convex cone spanned by two elements $f,g \in \R^2$. 
We say that $f,g$ are \emph{trigonometrically consecutive} elements of a set $V \subset \R^2 \sm\{0\}$ iff they are not collinear and no element of $V$ lies in the interior of $\Cone(f,g)$.
\label{def:Cone}
\end{definition}

\begin{definition}
A matrix $M \in S_2^+$ is said \emph{generic} iff there exists no $M$-orthogonal basis of $\Z^2$. (i.e. $(f,g) \in (\Z^2)^2$ such that $|\det(f,g)|=1$ and $\<f, M g\>=0$.)
\end{definition}

\begin{proposition}
\label{prop:ObtuseNorm}
Let $M\in S_2^+$, and let $(e_0,e_1,e_2)$ be an $M$-obtuse superbase. 
Then for each $e \in \Z^2 \sm \{\pm e_0, \pm e_1, \pm e_2\}$ with $\gcd(e) = 1$ one has $\|e\|_M \geq \max\{\|e_0\|_M, \|e_1\|_M, \|e_2\|_M\}$. The inequality is strict if $M$ is generic.
\end{proposition}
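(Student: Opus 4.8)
The plan is to prove that an $M$-obtuse superbase minimizes the $M$-norm among all primitive vectors. Let $(e_0,e_1,e_2)$ be $M$-obtuse, and take any primitive $e \in \Z^2$ not among $\pm e_0, \pm e_1, \pm e_2$. The key structural fact I would exploit is that since $(e_1, e_2)$ (say) is a basis of $\Z^2$, I can write $e = p\, e_1 + q\, e_2$ with integer coordinates $p,q$, and because $e_0 = -(e_1+e_2)$ I have three equivalent representations of $e$ in the three bases $(e_1,e_2)$, $(e_2,e_0)$, $(e_0,e_1)$. First I would compute $\|e\|_M^2 = p^2 \delta_1 + q^2 \delta_2 + 2pq\,\langle e_1, M e_2\rangle$, where $\delta_i = \|e_i\|_M^2$, and note the crucial sign information: the $M$-obtuseness gives $\langle e_1, M e_2\rangle \le 0$, and likewise for the other pairs.

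The heart of the argument is a case analysis on the signs of the coordinates $p,q$. If $p$ and $q$ have opposite signs (or one vanishes), then $2pq\,\langle e_1, M e_2\rangle \ge 0$, so $\|e\|_M^2 \ge p^2 \delta_1 + q^2 \delta_2 \ge \max\{\delta_1,\delta_2\}$ as soon as $e$ is primitive and distinct from $\pm e_1, \pm e_2$ (which forces $|p| \ge 1$ and $|q| \ge 1$, or one of them to exceed $1$). The remaining case, $p,q$ both positive (the both-negative case is symmetric since $\|{-e}\|_M = \|e\|_M$), is where I would switch to a different superbase pair. The trick is that $e = p e_1 + q e_2$ with $p,q>0$ can be rewritten using $e_0$: for instance if $p \le q$ then $e = p(e_1+e_2) + (q-p)e_2 = -p\,e_0 + (q-p)e_2$, which has a nonpositive coordinate on $e_0$ and a nonnegative one on $e_2$, landing us back in the favorable opposite-sign regime relative to the pair $(e_0, e_2)$ whose $M$-inner product $\langle e_0, M e_2\rangle \le 0$ is again controlled by obtuseness. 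This rewriting is the step I expect to require the most care: I must verify that the new coordinates are genuinely of opposite sign and that the resulting bound $\|e\|_M^2 \ge \max\{\delta_0, \delta_2\}$ survives, so that combining all cases yields $\|e\|_M^2 \ge \max_i \delta_i$.

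For the strictness claim under genericity, I would track when equality can hold in the above inequalities. Equality in $p^2\delta_1 + q^2\delta_2 + 2pq\langle e_1,Me_2\rangle = \max_i\delta_i$ with the sign conventions above forces either a cross term $\langle e_i, M e_j\rangle = 0$ to be used, or degenerate coordinate patterns that would make $e$ coincide with one of the $\pm e_i$ (excluded by hypothesis). Since $M$ generic means no pair of the superbase is $M$-orthogonal, i.e.\ all $\langle e_i, M e_j\rangle < 0$ strictly for $i \ne j$, every time a nonzero cross term appears it contributes strictly, and I would argue that at least one strictly positive contribution always survives, giving the strict inequality.

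The main obstacle will be organizing the case analysis cleanly: the symmetry among $(e_0,e_1,e_2)$ means there are several equivalent expansions of $e$, and I must choose the right pair of superbase vectors in each regime so that the relevant cross term has the favorable sign. I would therefore set up the argument to always pick, among the three representations of $e$, the one in which the two coordinates have opposite signs (such a representation always exists because $e_0+e_1+e_2=0$ forces not all three coordinate patterns to be sign-aligned), and then the obtuseness of the corresponding pair delivers the bound uniformly.
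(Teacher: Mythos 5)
Your proposal follows essentially the same route as the paper's proof: your ``representation of $e$ with opposite-sign coordinates relative to an obtuse pair'' is exactly the paper's choice of two trigonometrically consecutive elements $f,g$ of the symmetrized set $\{\pm e_0,\pm e_1,\pm e_2\}$ whose cone contains $e$. In both arguments, primitivity of $e$ forces the two integer coefficients to be at least $1$ in absolute value, obtuseness makes the cross term favorable, and genericity turns the key inequality strict; so the mechanism and the strictness argument coincide.

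There is, however, one concrete loose end in how you close the argument. Each of your cases, as stated, only delivers the maximum of the \emph{two} $\delta$'s belonging to the pair you expand in: the opposite-sign case for $(e_1,e_2)$ gives $\|e\|_M^2 \geq \max\{\delta_1,\delta_2\}$, and the rewritten case gives $\max\{\delta_0,\delta_2\}$. Since any given $e$ falls into exactly one case, ``combining all cases'' does not produce $\max\{\delta_0,\delta_1,\delta_2\}$: if $e$ lies in the first case and $\delta_0$ is the largest of the three, your stated bound says nothing about $\delta_0$. The fix is one line, and it is precisely the step the paper makes explicit: keep the stronger bound $\|e\|_M^2 \geq \delta_1 + \delta_2$ (available because $|p|,|q|\geq 1$ and the cross term is nonnegative), and observe that obtuseness also gives
\begin{equation*}
\delta_0 = \|e_1+e_2\|_M^2 = \delta_1 + \delta_2 + 2\langle e_1, M e_2\rangle \leq \delta_1 + \delta_2,
\end{equation*}
i.e.\ the sum of any two $\delta$'s dominates the third; the paper phrases this as $\|f\|_M^2 + \|g\|_M^2 \geq \max\{\|f\|_M^2,\|g\|_M^2,\|f-g\|_M^2\}$ when $\langle f, M g\rangle \geq 0$. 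With that observation inserted in each case (and noting that your ``one coordinate vanishes'' sub-case is vacuous, since primitivity then forces $e = \pm e_i$, which is excluded), your proof is complete and matches the paper's.
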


\begin{proof}
Consider the set $S := \{e_0,-e_2, e_1, -e_0, e_2,-e_1\}$, where for convenience elements are ordered trigonometrically, and some $e \in \Z^2\sm S$ with $\gcd(e)=1$. Let $f,g$ be trigonometrically consecutive elements of $S$ such that $e \in \Cone(f,g)$. Since $(f,g)$ is a basis of $\Z^2$, there exists $\alpha, \beta \in \mZ$ such that $e = \alpha f+ \beta g$. Since $e \in \Cone(f,g)$, we have $\alpha, \beta \geq 0$. Since $\gcd(e)=1$, one has $\gcd(a,b)=1$. Since $e \notin S$, one has $\alpha, \beta \geq 1$. By construction of $S$ one has $\<f,M g\> \geq 0$, hence $\|e\|_M^2 \geq \|f\|_M^2+\|g\|_M^2 \geq \max\{\|f\|_M^2, \|g\|_M^2, \|f-g\|_M^2\} =  \max\{\|e_0\|_M, \|e_1\|_M, \|e_2\|_M\}^2$ as announced. If $M$ is generic then $\<f, M g\> \neq 0$, thus $\<f, M g\> > 0$, hence inequalities are strict. 
\end{proof}

We next study trigonometrically consecutive elements $f,g$ of the stencil $V$ of an operator $\cD$ which consistency set contains a given matrix $M$. 
Corollary \ref{corol:ConsecutiveScal}, preceded with a technical lemma, identifies the sign of the scalar product $\<f,M g\>$. 
\begin{lemma}
\label{lem:Angle}
Let $M\in S_2^+$, and let $f,g\in \R^2$ be non collinear and such that $\<f,M g\> <0$. Then there exists $N \in S_2$ such that (i) $\det(M+\delta N) > \det(M)$ for any sufficiently small $\delta>0$, and (ii) $\<e, N e\> \leq 0$ for all $e \in \Cone(f,-g)$. 
\end{lemma}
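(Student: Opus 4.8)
The plan is to construct $N$ explicitly from the geometry of the pair $(f,g)$. Since $f,g$ are non-collinear, they form a basis of $\R^2$, so I can work in the dual basis. The defining feature I want is a perturbation direction $N$ that pushes $M$ to increase its determinant while remaining non-positive on the cone $\Cone(f,-g)$. The natural candidate is a rank-one (or rank-two) symmetric matrix tied to the directions $f$ and $g$.

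\medskip

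First I would set up coordinates. Let $(f^*, g^*)$ be the dual basis to $(f,g)$, i.e. the covectors satisfying $\<f^*, f\> = \<g^*, g\> = 1$ and $\<f^*, g\> = \<g^*, f\> = 0$. The cone $\Cone(f,-g)$ consists of vectors $e = \alpha f - \beta g$ with $\alpha, \beta \geq 0$, equivalently those $e$ with $\<f^*, e\> \geq 0$ and $\<g^*, e\> \leq 0$. A quadratic form that is non-positive on this cone, and whose sign is controlled by these two linear functionals, is naturally built as $N := -(f^* \otimes g^* + g^* \otimes f^*)$, so that $\<e, Ne\> = -2 \<f^*, e\>\<g^*, e\>$. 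On $\Cone(f,-g)$ the two factors $\<f^*,e\>$ and $\<g^*,e\>$ have opposite signs, making their product $\leq 0$ and hence $\<e,Ne\> \leq 0$, giving property (ii).

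\medskip

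For property (i), I would use the first-order expansion of the determinant. For symmetric $2\times 2$ matrices one has $\det(M + \delta N) = \det(M) + \delta\, \partial_M\!\det(M)[N] + O(\delta^2)$, where the derivative of the determinant is the cofactor (adjugate) pairing, $\partial_M \det(M)[N] = \Tr(\mathrm{adj}(M)\, N)$. So it suffices to check that this linear term is strictly positive for my chosen $N$. Here the hypothesis $\<f, Mg\> < 0$ must enter decisively. I expect that evaluating $\Tr(\mathrm{adj}(M) N)$ against $N = -(f^*\otimes g^* + g^* \otimes f^*)$ reduces, after rewriting $\mathrm{adj}(M)$ in terms of the basis $(f,g)$, to a positive multiple of $-\<f, Mg\>$, which is positive precisely by assumption. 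The clean way to see this is to note that $\<f^*, M^{-1} g^*\>$-type pairings, or equivalently the entries of $M$ in the $(f,g)$ basis, carry the sign of $\<f,Mg\>$ through the cofactor structure.

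\medskip

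The main obstacle I anticipate is bookkeeping the change of basis correctly: the determinant and adjugate are basis-independent objects, but the functionals $f^*, g^*$ and the scalar product $\<f, Mg\>$ live in the original Euclidean structure, so I must be careful to express $\mathrm{adj}(M)$ and the derivative pairing consistently. Concretely, I would write everything in the (generally non-orthonormal) basis $(f,g)$, where $M$ has Gram-matrix entries $\<f,Mf\>, \<f,Mg\>, \<g,Mg\>$, compute the determinant perturbation directly in these coordinates (absorbing the constant Jacobian $\det(f,g)^2$ which does not affect signs), and confirm that the sign of the linear term is exactly that of $-\<f,Mg\> > 0$. Once the sign computation is pinned down, (i) follows for all sufficiently small $\delta > 0$ by continuity, and (ii) holds for all $\delta$ by construction, completing the lemma.
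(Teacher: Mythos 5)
Your construction is the right one up to a sign, and the sign is not a cosmetic slip: it reverses both conclusions. With your $N := -(f^* \otimes g^* + g^* \otimes f^*)$ one has $\<e, N e\> = -2 \<f^*, e\> \<g^*, e\>$. On $\Cone(f,-g)$ write $e = \alpha f - \beta g$ with $\alpha, \beta \geq 0$, so that $\<f^*, e\> = \alpha \geq 0$ and $\<g^*, e\> = -\beta \leq 0$. The product $\<f^*,e\>\<g^*,e\> = -\alpha\beta$ is indeed $\leq 0$, as you say --- but then $\<e, N e\> = -2(-\alpha\beta) = 2\alpha\beta \geq 0$, the opposite of what (ii) demands; the leading minus sign in your definition undoes exactly the negativity you just established. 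The same flip breaks (i): writing $P$ for the matrix with columns $f,g$, a direct computation gives $\<g^*, M^{-1} f^*\> = [(P^\trans M P)^{-1}]_{21} = -\<f, Mg\> / (\det(f,g)^2 \det M) > 0$, hence for your $N$ one gets $\operatorname{tr}(\operatorname{adj}(M)\, N) = -2 \det(M) \<g^*, M^{-1} f^*\> = 2\<f,Mg\>/\det(f,g)^2 < 0$, so $\det(M + \delta N) < \det(M)$ for small $\delta > 0$. Your ``expectation'' that the cofactor pairing comes out as a positive multiple of $-\<f,Mg\>$ is precisely what fails for the sign you chose, and it is also the one step you left unverified.

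The repair is immediate: take $N := f^* \otimes g^* + g^* \otimes f^*$ (no minus sign). Then $\<e,Ne\> = 2\<f^*,e\>\<g^*,e\> = -2\alpha\beta \leq 0$ on $\Cone(f,-g)$, giving (ii), and the adjugate computation above now yields $\operatorname{tr}(\operatorname{adj}(M)\, N) = -2\<f,Mg\>/\det(f,g)^2 > 0$, giving (i) for all sufficiently small $\delta>0$. With this correction your proof is complete and, in substance, coincides with the paper's: since $f^* = -g^\perp/\det(f,g)$ and $g^* = f^\perp/\det(f,g)$, the corrected $N$ satisfies $\<x, N x\> = 2 \det(f,x)\det(x,g)/\det(f,g)^2$, i.e.\ it is a positive multiple of the paper's choice $\<x, N x\> = \det(f,x)\det(x,g)$. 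The only genuine difference of route is in how (i) is verified: the paper reduces to $M = \Id$ and $\|f\|=\|g\|=1$ via the substitution $M = A^\trans A$ and then diagonalizes $N$ (eigenvectors $f \pm g$, with $\<f,g\><0$ making the positive eigenvalue dominate), whereas your adjugate/trace argument handles general $M$ directly and avoids the reduction --- a slightly cleaner path, once the sign is right.
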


\begin{proof}
Case $M = \Id$ and $\|f\|=\|g\|=1$. We define $N \in S_2$ by the (non-definite) quadratic form $\<x,N x\> = \det(f,x) \det(x,g)$, $x \in X$.
It has eigenvectors $f+g$ and $f-g$, by a symmetry argument, with respective eigenvalues $ \lambda_0 := \det(f,g)^2/\|f+g\|^2$, and $-\lambda_1$ where $\lambda_1:=\det(f,g)^2/\|f-g\|^2$. Since $\<f,g\> < 0$ we have $\|f+g\| < \|f-g\|$, hence $\lambda_0 > \lambda_1$ and therefore $\det(\Id+ \delta N) = 1+\delta (\lambda_0-\lambda_1)  - \delta^2 \lambda_0\lambda_1 > 1$ for small positive $\delta$ as announced. 

General case. Write $M = A^\trans A$, for some $2 \times 2$ invertible matrix $A$, and take $N = A^\trans N' A$ where $N'$ is associated to $\Id$, $A f /\|A f\|$, $A g/\|A g\|$.
\end{proof}

\begin{corollary}
\label{corol:ConsecutiveScal}
Let $\cD$ be a DE2 operator with stencil $V$, and which consistency set contains the neighborhood of a matrix $M\in S_2^+$. 
If $f,g$ are trigonometrically consecutive elements of $V$, then $\<f,M g\> \geq 0$.
\end{corollary}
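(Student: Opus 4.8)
The plan is to argue by contradiction: I assume $\<f,Mg\> < 0$ and then build a perturbation of $M$ that strictly increases the determinant while not increasing any of the second order differences $\Delta_e$, $e \in V$, contradicting degenerate ellipticity. Since $f,g$ are trigonometrically consecutive they are in particular non-collinear, so Lemma \ref{lem:Angle} applies and furnishes a matrix $N \in S_2$ with $\det(M+\delta N) > \det(M)$ for all sufficiently small $\delta > 0$, together with the sign control $\<e,Ne\> \leq 0$ for $e \in \Cone(f,-g)$.

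The key step is to promote this sign control from the single cone $\Cone(f,-g)$ to the whole stencil $V$. Inspecting the quadratic form built in Lemma \ref{lem:Angle}, one sees that for $e = \alpha f + \beta g$ the value $\<e,Ne\>$ has the sign of $\alpha\beta$; equivalently, $\<e,Ne\> \leq 0$ holds exactly for $e \in \Cone(f,-g)\cup\Cone(-f,g)$, the two cones complementary to the interiors of $\Cone(f,g)$ and $\Cone(-f,-g)$ (the four cones $\Cone(\pm f,\pm g)$ tile $\R^2$ since $f,g$ form a basis of $\R^2$). Now trigonometric consecutiveness of $f,g$ in $V$ says that no element of $V$ lies in the interior of $\Cone(f,g)$; and because a stencil is symmetric, $V=-V$, no element of $V$ lies in the interior of $\Cone(-f,-g)$ either. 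Consequently every $e \in V$ satisfies $\<e,Ne\> \leq 0$.

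To close the argument I would set $M_\delta := M + \delta N$ with $\delta>0$ small enough that $M_\delta \in S_2^+$ and $M_\delta$ still belongs to the consistency set of $\cD$, which is legitimate since that set contains a neighborhood of $M$. For every $x \in X$ and every $e \in V$ we then have $\Delta_e u_{M_\delta}(x) = \<e, M_\delta e\> = \Delta_e u_M(x) + \delta\<e,Ne\> \leq \Delta_e u_M(x)$, so the monotonicity of $\cD u(x)$ in the differences $\Delta_e u(x)$ (degenerate ellipticity) yields $\cD u_{M_\delta}(x) \leq \cD u_M(x)$ on $X$. But consistency at both $M_\delta$ and $M$ forces $\cD u_{M_\delta}(x) = \det(M_\delta) > \det(M) = \cD u_M(x)$, a contradiction. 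Hence $\<f,Mg\> \geq 0$.

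The main obstacle is precisely the geometric upgrade in the second paragraph: one must verify that consecutiveness of $f,g$ in $V$ together with the symmetry $V = -V$ genuinely confines all stencil vectors to the two cones on which $N$ is negative semidefinite. Once that is in place, the remainder is a routine comparison between the monotonicity and the consistency of $\cD$.
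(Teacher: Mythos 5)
Your proposal is correct and follows essentially the same route as the paper: contradiction via Lemma \ref{lem:Angle}, confinement of the symmetric stencil $V$ to $\Cone(f,-g)\cup\Cone(-f,g)$ where $\<e,Ne\>\leq 0$, then the monotonicity/consistency clash $\det(M_\delta)=\cD(u_{M_\delta})\leq\cD(u_M)=\det(M)$. The only cosmetic difference is that you justify the sign of $\<e,Ne\>$ on all four cones by inspecting the quadratic form ($\<e,Ne\>$ has the sign of $\alpha\beta$ for $e=\alpha f+\beta g$), whereas the paper gets the cone $\Cone(-f,g)$ for free from the evenness of quadratic forms; both are valid.
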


\begin{proof}
Assume for contradiction that $\<f,M g\> < 0$. Let $N \in S_2$ be given by Lemma \ref{lem:Angle}, and let $M_\delta := M + \delta N$ for some small $\delta \geq 0$. An element $e \in V$ cannot belong to the interior of $\Cone(f,g)$ by definition, and neither to the interior of $\Cone(-f,-g)$ by symmetry of $V$. Hence it belongs to $\Cone(f,-g)$ or $\Cone(-f,g)$, which implies  $\<e, N e \> \leq 0$. We have obtained that $\<e, M_\delta e\> \leq \<e, M e\>$ for all $e \in V$, so that  by degenerate ellipticity $\det(M_\delta) = \cD(u_{M_\delta}) \leq \cD(u_M) = \det(M)$. 
This contradicts Lemma \ref{lem:Angle}, which concludes the proof.
\end{proof}

Our following step, Corollary \ref{corol:ConsecutiveDet} preceded with a technical lemma, shows that without loss of generality one can assume that consecutive elements of a stencil $V$ form bases of $\Z^2$.

\begin{lemma}
\label{lem:TriDet}
Let $f,g \in \mZ^2$, and let $T$ be the triangle of vertices $0,f,g$. If $|\det(f,g)|>1$ then $T$ contains a point $e$ distinct from its vertices, and such that $\gcd(e)=1$.
\end{lemma}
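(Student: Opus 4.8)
The plan is to first exhibit \emph{some} lattice point of $T$ other than the three vertices, and then to reduce it radially to a primitive one, exploiting that $0$ is itself a vertex of $T$.

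First I would record that $|\det(f,g)| \geq 2$ means $T = [0,f,g]$ has area $\frac12|\det(f,g)| \geq 1$, and deduce from this that $T$ must contain a lattice point $p$ distinct from $0,f,g$. The cleanest justification is the standard characterization of unimodular triangles: by Pick's theorem a lattice triangle whose only lattice points are its vertices has $I=0$ interior and $B=3$ boundary lattice points, hence area $\tfrac12$, which would contradict area $\geq 1$. A self-contained alternative avoiding Pick is an index argument: the sublattice $\mZ f + \mZ g$ has index $|\det(f,g)|\geq 2$ in $\mZ^2$, so the half-open parallelogram $\{sf+tg;\ 0\le s,t<1\}$ contains a nonzero lattice point $q=sf+tg$; if $s+t\le 1$ then $q\in T$, otherwise $f+g-q=(1-s)f+(1-t)g\in T$, and in both cases the barycentric coordinates lie strictly inside the ranges that would force a vertex, so the produced point is distinct from $0,f,g$.

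Next, starting from such a $p\in T\setminus\{0,f,g\}$, I would set $d:=\gcd(p)\ge 1$ and $e:=p/d$, which is primitive ($\gcd(e)=1$). Since $e$ lies on the segment $[0,p]$, and $T$ is convex with $0,p\in T$, we get $e\in T$. To finish I would verify $e\notin\{0,f,g\}$: it is nonzero as a primitive vector, and if $e$ equalled $f$ (say) then $p=df$; expanding $p$ in the linearly independent pair $(f,g)$ gives barycentric coordinates $(d,0)$, and $p\in T$ forces their sum $d\le 1$, whence $d=1$ and $p=f$, contradicting $p\neq f$. The same argument rules out $e=g$. This yields the desired $e\in T\setminus\{0,f,g\}$ with $\gcd(e)=1$.

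The main obstacle is not the existence of an interior or boundary lattice point, which is classical, but securing \emph{primitivity and vertex-avoidance simultaneously}. The radial reduction is what makes this work: because the origin is a vertex of $T$, the entire segment from $0$ to $p$ remains in $T$, so dividing out $\gcd(p)$ keeps us inside $T$; the hypothesis $|\det(f,g)|>1$ then enters precisely in the short coordinate check that forbids $p$ from being a proper multiple $df$ or $dg$ of an edge vector.
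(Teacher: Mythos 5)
Your proof is correct and follows essentially the same route as the paper's: a non-surjectivity/index argument producing a lattice point in the fundamental parallelogram of $\mathbb{Z} f + \mathbb{Z} g$, reflection through the parallelogram's center to land in the triangle while avoiding the vertices, and finally division by the gcd, which stays in $T$ because $0$ is a vertex so the segment $[0,p]$ lies in $T$. Your only additions are the optional Pick's-theorem justification for existence and the explicit verification that $p/\gcd(p)$ is not a vertex, a detail the paper leaves implicit.
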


\begin{proof}
Since $|\det(f,g)|>1$ the map $(\alpha, \beta) \in \Z^2 \mapsto \alpha f + \beta g \in \Z^2$ is not surjective. Hence there exists $(\alpha, \beta) \in \mQ^2$, at least one of them non-integer, such that $\alpha f + \beta g \in \Z^2$. Up to replacing $(\alpha, \beta)$ with $(\alpha - m, \beta - n)$, $(m,n) \in \Z^2$, we may assume that $\alpha, \beta \in [0,1]$. Up to replacing $(\alpha, \beta)$ with $(1-\alpha, 1-\beta)$, we may assume that $\alpha+ \beta \leq 1$.
The point $e := \alpha f + \beta g \in \Z^2$ belongs to $T$ and is distinct from its vertices. In the case where $\gcd(e) > 1$, we can replace it with $e/\gcd(e)$.
\end{proof}

\begin{corollary}
\label{corol:ConsecutiveDet}
Let $\cD$ be a DE2 operator with stencil $V$, and with a consistency set of non-empty interior. Then there exists a DE2 operator $\cD'$ with stencil $V'$, such that (i) $\cD$ and $\cD'$ have the same consistency set, (ii) $\Hull(V') \subset \Hull(V)$, and (iii) any two trigonometrically consecutive elements $f,g \in V'$ satisfy $|\det(f,g)| = 1$.
\end{corollary}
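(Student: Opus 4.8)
The plan is to \emph{complete} the stencil $V$ into the set of \emph{all} primitive lattice points it encloses,
\[
V' := \{e \in \Z^2;\ \gcd(e)=1,\ e \in \Hull(V)\},
\]
and to define $\cD'$ on this larger stencil by re-expressing each second order difference used by $\cD$ in terms of a primitive one. First I would check that $V'$ is a genuine stencil: it is finite (the primitive points of a bounded set), avoids $0$, and is symmetric because $\Hull(V)$ is symmetric and $\gcd(-e)=\gcd(e)$. Writing $\cD u(x) = F_x\big((\Delta_e u(x))_{e\in V}\big)$ with each $F_x$ non-decreasing and locally Lipschitz (this is exactly the DE2 hypothesis), and setting $\hat e := e/\gcd(e)$ for $e \in V$, I would define $\cD'$ by
\[
\cD' u(x) := F_x\big((\gcd(e)^2\, \Delta_{\hat e} u(x))_{e \in V}\big).
\]

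The purpose of the factor $\gcd(e)^2$ is that on a quadratic $u_M$ one has $\Delta_e u_M(x) = \<e, Me\> = \gcd(e)^2 \<\hat e, M\hat e\> = \gcd(e)^2 \Delta_{\hat e} u_M(x)$, so that $\cD'(u_M) = \cD(u_M)$ identically; this gives (i), that $\cD$ and $\cD'$ share the same consistency set. Since each $\hat e$ lies on the segment $[0,e]\subset \Hull(V)$ and is primitive, one has $\hat e \in V'$, so $\cD' u(x)$ is a function of the differences $(\Delta_{e'}u(x))_{e'\in V'}$ alone; being the composition of $F_x$ with the non-decreasing linear rescalings $t \mapsto \gcd(e)^2 t$, it is non-decreasing and locally Lipschitz in each of them, hence $\cD'$ is DE2 with stencil $V'$. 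Property (ii) is immediate, since $V' \subset \Hull(V)$ forces $\Hull(V') \subset \Hull(V)$.

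The heart of the argument is property (iii), which is where Lemma \ref{lem:TriDet} enters. Let $f,g$ be trigonometrically consecutive elements of $V'$; they are non-collinear, so $|\det(f,g)| \geq 1$, and up to swapping them I may assume $\det(f,g) \geq 1$. Arguing by contradiction, suppose $\det(f,g) > 1$. Lemma \ref{lem:TriDet} then produces a primitive point $e$ in the triangle $[0,f,g]$, distinct from its vertices. Since $0,f,g \in \Hull(V)$ the whole triangle lies in $\Hull(V)$, whence $e \in V'$. It only remains to locate $e$: because $f$ and $g$ are primitive, the open segments $[0,f]$ and $[0,g]$ contain no lattice point, so $e$ lies either in the open triangle or on the open segment $[f,g]$; in both cases $e$ belongs to the \emph{interior} of $\Cone(f,g)$, contradicting the consecutivity of $f,g$. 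Therefore $|\det(f,g)| = 1$, which is (iii).

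The step I expect to require the most care is the \textbf{primitivization} encoded in the definition of $\cD'$. A naive completion that merely appends the missing primitive points to $V$ and ignores them fails property (iii) as soon as $V$ contains a non-primitive vector $e$, for such an $e$ would remain consecutive to a neighbour with $|\det| = \gcd(e) > 1$. The rescaling $\Delta_e \leadsto \gcd(e)^2 \Delta_{\hat e}$ is precisely what lets us discard the non-primitive directions while preserving both degenerate ellipticity and the action on quadratics, and hence the consistency set; everything else is a direct consequence of Lemma \ref{lem:TriDet}, with no termination or iteration argument needed.
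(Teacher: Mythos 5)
Your proof is correct and follows essentially the same route as the paper: the same primitivized stencil $V' = \{e \in \Hull(V)\cap\Z^2;\ \gcd(e)=1\}$, the same construction of $\cD'$ by replacing each $\Delta_e$ with $\gcd(e)^2\Delta_{\hat e}$ (which the paper dismisses as ``straightforward''), and the same appeal to Lemma \ref{lem:TriDet} to obtain property (iii). You additionally spell out two details the paper leaves implicit, namely the DE2 verification for $\cD'$ and the check that the primitive point produced by Lemma \ref{lem:TriDet} must lie in the interior of $\Cone(f,g)$ (since primitivity of $f,g$ excludes the open edges $[0,f]$, $[0,g]$), which is exactly what makes the contradiction with consecutivity work.
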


\begin{proof}
Let $V' := \{ e \in \Hull(V) \cap \Z^2; \, \gcd(e) = 1\}$. For each $e \in V$ one has $e' := e/\gcd(e) \in \Hull(\{-e,e\})$, hence $e' \in V'$ since $V$ is symmetric w.r.t the origin. Note that $\Delta_e u_M = \<e,M e\> = \gcd(e)^2 \Delta_{e'} u_M$, for any $M \in S_2^+$, using \eqref{eqdef:UM}. Constructing $\cD'$ in terms of $\cD$ is from this point straightforward.

Let $f,g \in V'$ be trigonometrically consecutive. If $|\det(f,g)|>1$ then Lemma \ref{lem:TriDet} provides a point $e \in [0,f,g] \subset \Hull(V)$ with $\gcd(e) =1$, hence $e \in V'$; this contradicts our assumption on $f,g\in V'$.
The case $\det(f,g)= 0$ is excluded by Definition \ref{def:TrigConsecutive}, hence $|\det(f,g)|=1$ which concludes the proof.
\end{proof}

Finally, we identify a condition under which a stencil $V$ contains an $M$-obtuse superbase, and we conclude the proof of the announced Theorem \ref{th:Minimality}.

\begin{lemma}
\label{lem:AcuteStencilSuperbase}
Let $M\in S_2^+$, and let $V$ be a stencil which contains some non-collinear elements, and such that any two trigonometrically consecutive $f,g\in V$ satisfy $\<f, M g \> > 0$ and $|\det(f,g)|=1$. Then $V$ contains an $M$-obtuse superbase.
\end{lemma}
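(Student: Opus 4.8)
The plan is to list the elements of $V$ by angle and run an induction that peels off one antipodal pair of directions at a time, stopping as soon as an $M$-obtuse superbase appears as three consecutive directions.

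First I would record the rigidity forced by the hypotheses. Since trigonometrically consecutive $f,g\in V$ satisfy $|\det(f,g)|=1$, no element of $V$ can be non-primitive and no two can be collinear (a repeated direction, or a factor $\gcd>1$, would make some consecutive determinant a non-unit). Writing $v_1,\dots,v_n$ for the elements of $V$ in increasing argument (indices cyclic), the symmetry $-V=V$ gives $n=2m$ and $v_{i+m}=-v_i$. The case $n=2$ is excluded since $V$ has non-collinear elements, and $n=4$ is impossible because then $v_3=-v_1$ forces $s_2:=\<v_2,Mv_3\>=-\<v_1,Mv_2\><0$; hence $n\ge 6$. No angular gap can reach $\pi$ (else $-v_i$ would sit in it), so $\det(v_i,v_{i+1})=+1$ and, by hypothesis, $s_i:=\<v_i,Mv_{i+1}\>>0$ for all $i$. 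Moreover the $m\ge 3$ positive gaps filling any half-turn from $v_i$ to $-v_i$ sum to $\pi$, so any two consecutive gaps sum to $<\pi$; thus $v_i$ lies strictly inside $\Cone(v_{i-1},v_{i+1})$, and expressing $v_i$ in the basis $(v_{i-1},v_{i+1})$ while using $\det(v_{i-1},v_i)=\det(v_i,v_{i+1})=1$ yields the integer relation $v_{i-1}+v_{i+1}=c_i\,v_i$ with $c_i\ge 1$.

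The crux is that a \emph{longest} direction has minimal coefficient. Choose $j$ maximizing $\|v_j\|_M$. As $v_{j-1},v_{j+1}$ are not parallel, the strict triangle inequality for $\|\cdot\|_M$ gives
\[ c_j\|v_j\|_M=\|v_{j-1}+v_{j+1}\|_M<\|v_{j-1}\|_M+\|v_{j+1}\|_M\le 2\|v_j\|_M, \]
so $c_j<2$, whence $c_j=1$ and $v_{j-1}+v_{j+1}=v_j$. Then $(e_0,e_1,e_2):=(-v_j,\,v_{j-1},\,v_{j+1})$ is a superbase of $\mZ^2$ inside $V$ (it sums to zero and $\det(v_{j-1},v_{j+1})=c_j=1$). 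Two of its scalar products are automatically negative, $\<e_0,Me_1\>=-s_{j-1}<0$ and $\<e_0,Me_2\>=-s_j<0$, so it is $M$-obtuse exactly when $\<v_{j-1},Mv_{j+1}\>\le 0$. I would now induct on $n$. If $\<v_{j-1},Mv_{j+1}\>\le 0$ we are done. Otherwise $\<v_{j-1},Mv_{j+1}\>>0$, which can occur only for $n\ge 8$: in the minimal case $n=6$ one has $v_{j+2}=-v_{j-1}$, so $\<v_{j-1},Mv_{j+1}\>=-\<v_{j+1},Mv_{j+2}\>=-s_{j+1}<0$. For $n\ge 8$ I remove the antipodal pair $\{v_j,-v_j\}$; the resulting $V'$ is still symmetric with $n-2\ge 6$ directions, its only new consecutive pairs $(v_{j-1},v_{j+1})$ and its antipode satisfy $\det(v_{j-1},v_{j+1})=c_j=1$ and $\<v_{j-1},Mv_{j+1}\>>0$, and all other consecutive pairs are inherited from $V$; so $V'$ meets every hypothesis and the induction hypothesis supplies an $M$-obtuse superbase, which lies in $V$.

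The step I expect to demand the most care is the inequality $c_j=1$ at a longest direction, together with the bookkeeping that removal alters only the single angular gap vacated by $\pm v_j$, so that $V'$ genuinely inherits primitivity, unit consecutive determinants, and positive consecutive products. The base computation $n=6\Rightarrow\<v_{j-1},Mv_{j+1}\>=-s_{j+1}<0$ is the termination certificate guaranteeing the induction never descends below six directions, and is worth isolating.
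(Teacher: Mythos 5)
Your proof is correct, but it takes a genuinely different route from the paper's. The paper argues top-down: it takes as given the existence of an $M$-obtuse superbase (Selling's algorithm, Proposition \ref{prop:convalg2}) and the norm-minimality of its elements (Proposition \ref{prop:ObtuseNorm}, which is proved precisely for this purpose), and then shows that each element $e$ of that superbase is forced to lie in $V$: one places $e$ in the cone of two trigonometrically consecutive $f,g\in V$, writes $e=\alpha f+\beta g$ with $\alpha,\beta\geq 0$ integers and $\gcd(\alpha,\beta)=1$, and uses $\langle f,Mg\rangle>0$ to get $\|e\|_M^2>\|f\|_M^2+\|g\|_M^2>\max\{\|f\|_M^2,\|g\|_M^2,\|f-g\|_M^2\}$, contradicting norm-minimality unless $\alpha\beta=0$, i.e. $e\in\{f,g\}$. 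Your argument is bottom-up and self-contained: it never invokes Selling's existence result nor Proposition \ref{prop:ObtuseNorm}, but constructs an obtuse superbase inside $V$ directly, via the angular ordering, the consecutive-vector relation $v_{i-1}+v_{i+1}=c_i v_i$ (valid because consecutive determinants are $+1$ and two consecutive gaps sum to less than $\pi$), the key observation that $c_j=1$ at an $M$-longest direction, and the peeling induction whose base case $n=6$ makes obtuseness automatic. What each buys: the paper's proof is only a few lines because the heavy lifting is done once in reusable propositions needed elsewhere in the minimality section; yours is longer but elementary and constructive, and in effect re-proves the existence of an $M$-obtuse superbase within any stencil satisfying the hypotheses, by a descent discarding the $M$-longest antipodal pair at each step --- a pleasant reverse-direction counterpart of Selling's algorithm. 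The delicate points you flag (primitivity and unit determinants forced by the hypothesis, strict triangle inequality giving $c_j<2$, the bookkeeping that removal of $\pm v_j$ creates exactly two new consecutive pairs satisfying the hypotheses, and the $n=6$ termination certificate) all check out.
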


\begin{proof}
Let $e$ be an element of an $M$-obtuse superbase, and let $f,g \in V$ be trigonometrically consecutive and such that $e \in \Cone(f,g)$. Note that $f,g$ exist because $V$ contains some non-collinear elements and is symmetric w.r.t the origin.
Since $|\det(f,g)|=1$, one has $e = \alpha f+ \beta g$ for some $\alpha, \beta \in \Z$. Since $e \in \Cone(f,g)$, we have $\alpha, \beta \geq 0$.
Since $\gcd(e)=1$, one has $\gcd(\alpha, \beta)=1$.
Assuming for contradiction that $\alpha, \beta \geq 1$, we obtain $\|e\|_M^2 > \|f\|^2_M+\|g\|^2_M > \max \{\|f\|_M^2, \|g\|_M^2, \|f-g\|_M^2\}$ since $\<f, M g\> >0$. This contradicts Proposition \ref{prop:ObtuseNorm}, therefore $\alpha\beta = 0$. But then $(\alpha, \beta)$ equals $(1,0)$ or $(0,1)$, since $\gcd(\alpha, \beta)=1$. Thus $e \in \{f,g\} \subset V$ which concludes the proof.
\end{proof}

\begin{proof}[Proof of Theorem \ref{th:Minimality}]
Let $\cD$ be a DE2 operator with stencil $V$, and which consistency set contains the neighborhood of a generic matrix $M$. Let $\cD'$ and $V'$ be as described in Corollary \ref{corol:ConsecutiveDet}. Let $f,g \in V'$ be trigonometrically consecutive; note that $|\det(f,g)|=1$. 

Case of a generic matrix $M \in S_2^+$.
Corollary \ref{corol:ConsecutiveScal} states that $\<f, M g \> \geq 0$, hence $\<f, M g \> > 0$ since $M$ is generic. The consistency assumption implies that $V$ contains non-collinear elements, hence so does $V'$. Invoking Lemma \ref{lem:AcuteStencilSuperbase} we find that $V' \subset \Hull(V)$ contains an $M$-obtuse superbase, as announced.

Case of a non-generic $M \in S_2^+$. Let $(M_n)_{n \geq 0}$, $M_n \in S_2^+$, be a sequence of generic matrices converging to $M$. By the previous point, $\Hull(V)$ contains an $M_n$-obtuse superbase $(e^n_0, e^n_1, e^n_2)$ for all sufficiently large $n$. By Proposition \ref{prop:convalg2} the elements of these superbases are bounded independently of $n$. Since superbases are discrete objects, infinitely many among this sequence are equal to some fixed $(e_0,e_1,e_2) \in (\mZ^2)^3$, also contained in $\Hull(V)$ and which by continuity is an $M$-obtuse superbase.
\end{proof}

\section{Proofs on hierarchical stencil refinement}
\label{sec:Hierarchy}

We establish the results announced in \S \ref{sec:HierarchyIntro}. Propositions \ref{prop:VExtended} (Stencil extension) and \ref{prop:PositiveDifferences} (Discrete convexity) are proved in \S \ref{sec:VExtended}. Algorithm \ref{algo:D2p} is rephrased in \S \ref{sec:Algorithm} as a depth first search within the Stern-Brocot tree. Theorem \ref{th:Hierarchy} (Adaptive pruning equals extensive sweeping) is established in \S \ref{sec:PropagationOnTrees} in the quadratic case, and in \S \ref{sec:HierarchyConclusion} in the general case.

\subsection{Properties of stencils}
\label{sec:VExtended}

We establish several properties of stencils announced in \S \ref{sec:HierarchyIntro}, starting with Proposition \ref{prop:VExtended}: any stencils $\cV$ can be extended by union with the sets $\cV_\Omega$. This requires two technical lemmas.
\begin{lemma}
\label{lem:DecompEdge}
Let $e \in \mT \sm V_8$, $e = f \oplus g$. The graph $\mT$ has exactly one edge arriving at $e$, which is either $f \to e$ or $g \to e$.
\end{lemma}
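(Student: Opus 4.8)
The plan is to reduce everything to a characterization of the incoming edges of $e$ in terms of the unique decomposition $e = f\oplus g$, and then to a short piece of integer arithmetic. First I would determine, from Definition \ref{def:GraphT}, exactly which edges can arrive at $e$. An edge $v\to e$ is by definition either of the form $v\to \phi\oplus v$ or $v\to v\oplus\psi$, where $v=\phi\oplus\psi$. By Proposition \ref{prop:SumChildren} these targets are the vectors $\phi+v$ and $v+\psi$, whose decompositions are $\phi\oplus v$ and $v\oplus\psi$ respectively. Since the decomposition of the \emph{vector} $e$ is unique (Proposition \ref{prop:Decomp}), matching $\phi\oplus v=f\oplus g$ forces $v=g$ and $\phi=f$, while matching $v\oplus\psi=f\oplus g$ forces $v=f$ and $\psi=g$. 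Hence the only candidate incoming edges are $g\to e$ (which exists iff $g=f\oplus\psi$ for some $\psi$, call this (I)) and $f\to e$ (which exists iff $f=\phi\oplus g$ for some $\phi$, call this (II)); in particular no vertex other than $f$ or $g$ can be a parent of $e$.

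Next I would rephrase (I) and (II) as inner-product inequalities, using the bijection of Proposition \ref{prop:Decomp}. Set $g':=g-f$ and $f':=f-g$. One checks $\det(f,g')=\det(f,g)=1$ and $\det(f',g)=\det(f,g)=1$, so $(f,g')$ and $(f',g)$ are bases; moreover $\<f,g'\>=\<f,g\>-\|f\|^2$ and $\<f',g\>=\<f,g\>-\|g\|^2$. Since by Proposition \ref{prop:Decomp} a pair is a direct acute basis iff it is the decomposition of its sum, condition (I) holds iff $(f,g')$ is direct acute, i.e.\ iff $\<f,g\>\geq \|f\|^2$; likewise (II) holds iff $\<f,g\>\geq\|g\|^2$. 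The degenerate cases where $f$ or $g$ lies on a coordinate axis are automatically covered: such a vector admits no decomposition, and the corresponding inequality indeed fails.

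It then remains to show that, under the hypothesis $e\notin V_8$, \emph{exactly} one of $\<f,g\>\geq\|f\|^2$, $\<f,g\>\geq\|g\|^2$ holds. For uniqueness, if both held then $\<f,g\>^2\geq\|f\|^2\|g\|^2$, contradicting the elementary identity $\|f\|^2\|g\|^2-\<f,g\>^2=\det(f,g)^2=1$. For existence I argue by contradiction: suppose $\<f,g\><\|f\|^2$ and $\<f,g\><\|g\|^2$. Writing the integers $s:=\<f,g\>\geq 0$ and, after possibly swapping $f$ and $g$, $p:=\|f\|^2\leq q:=\|g\|^2$, the same identity reads $pq-s^2=1$ with $0\leq s<p\leq q$. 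Then $p^2\leq pq=s^2+1\leq (p-1)^2+1=p^2-2p+2$, forcing $p\leq 1$, hence $p=1$, $s=0$, $q=1$. Thus $f,g$ are orthogonal unit integer vectors, so $e=f+g\in V_8$, contradicting the hypothesis. Therefore exactly one incoming edge exists, and it is $f\to e$ or $g\to e$, as claimed.

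I expect the third paragraph, the integer dichotomy showing that a near-orthogonal direct acute basis of $\Z^2$ must be the standard one (whence $e\in V_8$), to be the only genuinely nontrivial step. The edge bookkeeping of the first two paragraphs is routine once the uniqueness in Proposition \ref{prop:Decomp} is invoked to pin down the parents.
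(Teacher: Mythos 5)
Your proof is correct, and it is self-contained where the paper's is not: the paper's own proof of this lemma is two lines, obtaining existence and uniqueness of the incoming edge from the global structure result (Proposition \ref{prop:V8T}) and then identifying the parent as $f$ or $g$ by uniqueness of the decomposition --- exactly your first paragraph. The substance of your second and third paragraphs is what the paper places in the appendix behind Proposition \ref{prop:V8T}, but organized differently. There, uniqueness of the parent comes from \emph{adding} the two acuteness conditions $\langle f,g-f\rangle\geq 0$ and $\langle f-g,g\rangle\geq 0$ to get $-\|f-g\|^2\geq 0$ (Lemma \ref{lem:AtMostOne}), whereas you \emph{multiply} your two inequalities and contradict $\|f\|^2\|g\|^2-\langle f,g\rangle^2=1$; and existence comes from a norm-comparison trichotomy (Lemma \ref{lem:SubSum}: if $\|f\|>\|g\|$ then $f=(f-g)\oplus g$, with $\|f\|=\|g\|$ forcing $\|e\|^2=2$), whereas you run the integer estimate $p^2\leq pq=s^2+1\leq(p-1)^2+1$ forcing $f,g$ to be orthogonal unit vectors, hence $e\in V_8$. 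The two dichotomies are arithmetically equivalent --- your condition $\langle f,g\rangle\geq\|g\|^2$ holds iff $\|f\|>\|g\|$, by the same identity \eqref{eq:ComplexModuleMultiplication} that drives both arguments --- so the core mechanism is shared; what your route buys is independence from the appendix (you invoke only Propositions \ref{prop:Decomp} and \ref{prop:SumChildren} plus that identity), while the paper's route buys brevity, since Proposition \ref{prop:V8T} is needed elsewhere anyway and its proof does double duty.
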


\begin{proof}
The existence of a unique edge arriving at $e$ follows from the description of $\mT$, Proposition \ref{prop:V8T}.
Let $e' = f'\oplus g' \in \mT$ be such that $e' \to e$. By definition of this graph structure $e$ equals $f'\oplus e'$ or $e'\oplus g'$. By uniqueness of the decomposition $e' \in \{f,g\}$, which concludes the proof.
\end{proof}

\begin{lemma}
\label{lem:AllSensibleDirections}
Let $\cV$ be a family of stencils, and let $x\in X$. Then any $e \in \mT$ such that $x\pm e \in \Omega$, belongs to $\cVu(x) := \cV(x) \cup \cVin(x)$.
\end{lemma}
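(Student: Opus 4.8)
The plan is to unwind the definitions and reduce the statement to a direct case distinction, since this lemma is essentially a repackaging of the (Reachability) axiom in Definition \ref{def:Stencils} together with the definition $\cVu(x) := \cV(x) \cup \cVin(x)$. First I would fix $x \in X$ and an arbitrary $e \in \mT$ with $x \pm e \in \Omega$, and split on whether or not $e$ already lies in $\cVin(x)$.

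In the first case, $e \in \cVin(x) \subseteq \cVu(x)$ and we are done immediately. In the second case, $e \notin \cVin(x)$; since every element of $\cVin(x) = \cV_\Omega(x)$ is of the form $f \oplus g$ and hence has $\gcd = 1$, we have $\cVin(x) \subseteq \mT$, so $e \notin \cVin(x)$ genuinely means $e \in \mT \sm \cVin(x)$. Combined with the hypothesis $x \pm e \in \Omega$, property (Reachability) of the family $\cV$ yields $e \in \cV(x) \subseteq \cVu(x)$. Either way $e \in \cVu(x)$, which is the claim.

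The only subtlety to keep in mind — and the point where one must read the definitions carefully rather than a genuine obstacle — concerns the non-decomposable vectors of $\mT$. The four axis directions $(\pm 1, 0), (0, \pm 1)$ cannot be written as $f \oplus g$ (they violate the $ab \neq 0$ requirement of Proposition \ref{prop:Decomp}), so they never belong to $\cVin(x)$; but for such $e$ the argument above places them in the second case, and (Reachability) applies verbatim (alternatively one notes they lie in $V_8 \subseteq \cV(x)$ unconditionally). Thus no vector of $\mT$ with $x \pm e \in \Omega$ escapes the case split, and the proof is complete with no further computation.

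\begin{proof}
Fix $x \in X$ and let $e \in \mT$ satisfy $x \pm e \in \Omega$. If $e \in \cVin(x)$, then $e \in \cV(x) \cup \cVin(x) = \cVu(x)$. Otherwise $e \notin \cVin(x)$; since $\cVin(x) \subseteq \mT$, this means $e \in \mT \sm \cVin(x)$, and as $x \pm e \in \Omega$, property (Reachability) of Definition \ref{def:Stencils} gives $e \in \cV(x) \subseteq \cVu(x)$. In both cases $e \in \cVu(x)$, as announced.
\end{proof}
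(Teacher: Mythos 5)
Your proof is correct and coincides with the paper's own argument, which is the same one-line case split: if $e \notin \cVin(x)$ then (Reachability) puts $e$ in $\cV(x)$, and otherwise $e \in \cVin(x) \subset \cVu(x)$ trivially. The extra remark about the axis vectors is harmless but unnecessary, since (Reachability) covers them without special treatment.
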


\begin{proof}
If $e \notin \cVin(x)$, then $e \in \cV(x)$ by (Reachability).
\end{proof}

\begin{proof}[Proof of Proposition \ref{prop:VExtended}]
We consider a family $\cV$ of stencils, and show that $\cVu$ also is one. The inclusion $\cV(x) \subset \cVu(x)$ implies (Reachability), as well as $V_8 \subset \cVu(x)$. The inclusions $\cV(x) \subset \mT$ and $\cVin(x) \subset \mT$ imply that $\cVu(x) \subset \mT$.  Only (Hierarchy) is thus left to prove.

Consider an edge $e' \mapsto e$ of $\mT$, with $e \in \cVu (x)$. Our objective is to show that $e' \in \cVu(x)$. If $e \in \cV(x)$, then this follows from (Hierarchy) for $\cV$. Otherwise $e \in \cVin(x)$, hence it admits a decomposition $e = f \oplus g$, and $e' \in \{f,g\}$ by Lemma \ref{lem:DecompEdge}. Then $x\pm e' \in \Omega$ by definition \eqref{eqdef:Vin} of $\cVin(x)$, and therefore $e' \in \cVu(x)$ by Lemma \ref{lem:AllSensibleDirections}.
\end{proof}

Our next proposition shows as announced in \S \ref{sec:HierarchyIntro} that condition (Reachability), required for families of stencils, is vacuous for all points $x \in X$ far from the boundary $\partial \Omega$. Corollary \ref{corol:ReachabilityVacuousBox} shows in addition that it is entirely vacuous if $\Omega$ is a box domain. For that purpose we need a technical lemma.

\begin{lemma}
\label{lem:ParentsTriangle}
Let $e = f \oplus g$, and let us assume that $e$ has positive coordinates. Then $f,g$ belong to the triangle $[(1,0),(0,1),e]$.
\end{lemma}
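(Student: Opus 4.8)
The plan is to reduce this geometric statement to a few sign conditions on the coordinates of $f$ and $g$, and to extract those conditions from the three defining relations of the decomposition $e=f\oplus g$: namely $f+g=e$, $\det(f,g)=1$, and $\<f,g\>\geq 0$ (Proposition~\ref{prop:Decomp}). Write $e=(a,b)$ with $a,b\geq 1$ (the positivity hypothesis) and $f=(f_1,f_2)$, $g=(g_1,g_2)$ with integer entries. The first and essential step is to show that $f$ and $g$ lie in the closed first quadrant, and more precisely that $f_1\geq 1$, $f_2\geq 0$, $g_1\geq 0$, $g_2\geq 1$.

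This step is the one I expect to be the main obstacle, because it is exactly here that integrality must be combined with $\det(f,g)=1$ \emph{and} acuteness; the purely angular intuition (that $f,g$ lie within a $\pi/2$ wedge around the direction of $e$) does not by itself force non-negativity of the cross coordinates. I would argue by contradiction. Suppose $f_2\leq -1$; then $g_2=b-f_2\geq 2$, so acuteness $f_1g_1+f_2g_2\geq 0$ forces $f_1g_1\geq -f_2g_2\geq 2$, whence $f_1$ and $g_1$ share a sign. If both are positive then $f_1g_2\geq 2$ and $-f_2g_1\geq 1$ give $\det(f,g)=f_1g_2-f_2g_1\geq 3$, contradicting $\det(f,g)=1$; if both are negative then $a=f_1+g_1\leq -2$ contradicts $a\geq 1$. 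Hence $f_2\geq 0$. Applying the same conclusion to the decomposition $Re=(Rg)\oplus(Rf)$, where $R(x,y)=(y,x)$ preserves the sum and the acuteness and restores $\det=1$ after the swap, yields $g_1\geq 0$. With $f_2,g_1\geq 0$ in hand, $f_1g_2=1+f_2g_1\geq 1$ shows $f_1,g_2$ share a sign; in the both-negative branch the sum relations $a,b\geq1$ force $g_1\geq 2$ and $f_2\geq 2$, so $f_1g_1+f_2g_2\leq -4<0$ again contradicts acuteness. Thus $f_1\geq 1$, and by the same symmetry $g_2\geq 1$.

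It then remains to turn these coordinate bounds into membership in the triangle $T:=[(1,0),(0,1),e]$, which I would do with the standard criterion that a point lies in a triangle iff the three signed areas it cuts off share the sign of the total signed area. The total area is proportional to $\det((0,1)-(1,0),\,e-(1,0))=1-a-b<0$. For $f$ the three sub-areas simplify, after substituting $e-f=g$ and $\det(f,g)=1$, to $-g_1-1$, $\,1-g_2$ and $1-f_1-f_2$ (for instance $\det((0,1)-f,\,e-f)=\det((0,1),g)-\det(f,g)=-g_1-1$). Each is $\leq 0$ by the bounds just proved ($g_1\geq 0$, $g_2\geq 1$, $f_1+f_2\geq 1$), matching the sign of $1-a-b$, so $f\in T$. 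The computation for $g$ is identical and produces the sub-areas $1-f_1$, $-1-f_2$ and $1-g_1-g_2$, all $\leq 0$ since $f_1\geq 1$, $f_2\geq 0$ and $g_1+g_2\geq 1$; hence $g\in T$ as well.

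In summary, the only delicate part is the quadrant/positivity step, which is short but genuinely uses all three relations at once; the triangle-membership step is routine determinant bookkeeping once the sign of each sub-area is read off from the coordinate bounds.
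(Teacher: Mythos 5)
Your proof is correct, but it follows a genuinely different route from the paper's. The paper proves a stronger statement (Lemma 3.2 of the cited preprint, reproduced in its appendix): for \emph{any} direct acute basis $(f,g)$ whose open cone contains $e$, the two parents of $e$ \emph{and} the mediant $f+g$ all lie in the triangle $[e,f,g]$; the stated Lemma \ref{lem:ParentsTriangle} is then just the special case $(f,g)=((1,0),(0,1))$. Its argument is a decreasing induction on the integer $\langle f,g\rangle$, descending the Stern--Brocot tree by mediant subdivision ($e$ lies in the open cone of $(f,f+g)$ or of $(f+g,g)$, each with strictly larger scalar product) until one reaches $e=f+g$. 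You instead attack the special case head-on: a sign analysis of the coordinates of $f$ and $g$ extracted jointly from the three relations $f+g=e$, $\det(f,g)=1$, $\langle f,g\rangle\geq 0$, followed by signed-area (barycentric) bookkeeping. Your quadrant step is sound, your symmetry trick works (the swap $R(x,y)=(y,x)$ reverses the sign of the determinant, so $(Rg,Rf)$ is again a direct acute basis summing to $Re$, and the conclusion of your first step transfers), and the three sub-area computations for $f$ and for $g$ check out against the bounds $f_1\geq 1$, $f_2\geq 0$, $g_1\geq 0$, $g_2\geq 1$. As for what each approach buys: yours is elementary, induction-free, and sized exactly to the statement actually used in Proposition \ref{prop:ReachabilityVacuous}; the paper's induction is what is needed for the general cone statement, which cannot be reduced to your special case by a unimodular change of basis (such maps do not preserve acuteness), and that generality is what ties the lemma to the tree structure exploited elsewhere in the paper.
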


\begin{proof}
The proof appears in \cite{Mirebeau:Vn5Iu9VK}, but is reproduced Appendix \ref{sec:TwoLemmasFromMir} for completeness.
\end{proof}

\begin{proposition}
\label{prop:ReachabilityVacuous}
Let $x \in X$ be such that $x\pm (1,0), x\pm (0,1) \in \Omega$. Then condition (Reachability) is vacuous for $x$, in the following sense: any $e\in \mT \sm \cVin(x)$ such that $x \pm e \in \Omega$ must be of the form $(\pm 1, 0)$ or $(0, \pm 1)$, hence automatically $e \in V_8 \subset \cV(x)$.
\end{proposition}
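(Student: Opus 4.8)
The plan is to prove the contrapositive: I take $e \in \mT$ with $x \pm e \in \Omega$ that is \emph{not} of the form $(\pm 1, 0)$ or $(0, \pm 1)$, and show that $e \in \cVin(x)$. Since $e \in \mT$ means $\gcd(e) = 1$, and $e$ is not an axis vector, both coordinates of $e$ are nonzero; hence Proposition \ref{prop:Decomp} (Definition \ref{def:OPlus}) provides a decomposition $e = f \oplus g$. By the definition \eqref{eqdef:Vin} of $\cVin(x)$ it then suffices to establish $x \pm f, x \pm g \in \Omega$, since $x \pm e \in \Omega$ is assumed.

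The geometric heart of the argument is Lemma \ref{lem:ParentsTriangle}, which locates the two parents $f,g$ inside the triangle $[(1,0),(0,1),e]$ whenever $e$ has positive coordinates. Combined with the convexity of $\Omega$, this immediately yields the claim in the positive quadrant: writing $f$ as a convex combination $\lambda_1 (1,0) + \lambda_2 (0,1) + \lambda_3 e$ with $\lambda_i \geq 0$, $\sum_i \lambda_i = 1$ (and similarly for $g$), I get $x + f = \lambda_1 (x+(1,0)) + \lambda_2 (x + (0,1)) + \lambda_3 (x+e)$, a convex combination of points of $\Omega$, hence in $\Omega$; flipping all signs gives $x - f$ as a convex combination of $x-(1,0), x-(0,1), x-e$, so $x - f \in \Omega$ as well, using the hypothesis $x \pm (1,0), x \pm (0,1) \in \Omega$ together with $x \pm e \in \Omega$. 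The same applies to $g$.

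The one subtlety — and the step I expect to require the most care — is that Lemma \ref{lem:ParentsTriangle} only covers $e$ in the positive quadrant, whereas the signs of the (nonzero) coordinates of $e$ are arbitrary. I would remove this restriction by a symmetry argument: the $90^\circ$ rotations $R$ preserve $\Z^2$, have $\det R = 1$, and are orthogonal, so they send direct acute bases to direct acute bases and therefore commute with $\oplus$, i.e.\ $e = f \oplus g$ implies $R e = (R f) \oplus (R g)$. Choosing $R$ so that $R^{-1} e$ has positive coordinates, I apply the lemma to $R^{-1} e = (R^{-1} f) \oplus (R^{-1} g)$ and rotate back, obtaining $f, g \in [R(1,0), R(0,1), e]$. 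The two rotated vertices satisfy $R(1,0), R(0,1) \in \{\pm(1,0), \pm(0,1)\}$, so the hypothesis still guarantees $x \pm R(1,0), x \pm R(0,1) \in \Omega$, and the convex-combination argument of the previous paragraph applies verbatim.

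Putting these together, $e = f \oplus g$ satisfies $x \pm e, x \pm f, x \pm g \in \Omega$, so $e \in \cVin(x)$ by \eqref{eqdef:Vin}, contradicting $e \in \mT \sm \cVin(x)$. Hence any such $e$ must be an axis vector $(\pm 1, 0)$ or $(0, \pm 1)$, which is an isolated vertex of $\mT$ belonging to $V_8$ by Proposition \ref{prop:V8T}; consequently $e \in V_8 \subset \cV(x)$ by Definition \ref{def:Stencils}, as claimed.
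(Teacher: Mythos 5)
Your proof is correct and follows essentially the same route as the paper's: the contrapositive, the decomposition $e = f \oplus g$ from Proposition \ref{prop:Decomp}, Lemma \ref{lem:ParentsTriangle} to place $f,g$ in the triangle, and convexity of $\Omega$ to conclude $e \in \cVin(x)$. The only difference is that you carefully justify the reduction to the positive quadrant via the rotation-commutes-with-$\oplus$ argument, a step the paper simply absorbs into a ``without loss of generality.''
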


\begin{proof}
Consider $x \in X$ and $e \in \mT$, distinct from $(\pm 1,0)$ and $(0, \pm 1)$ and such that $x \pm e \in \Omega$.
Without loss of generality, we assume that both coordinates of $e$ positive, and that $x=0$. By Proposition \ref{prop:Decomp} we may introduce the decomposition $e = f \oplus g$. By Lemma \ref{lem:ParentsTriangle}, $f,g \in [(1,0),(0,1),e] \subset \Omega$. Likewise $-f,-g \in [(-1,0), (0,-1),-e] \subset \Omega$. Thus $e \in \cVin(x)$, which concludes the proof.
\end{proof}

\begin{corollary}
\label{corol:ReachabilityVacuousBox}
Assume a box domain $\Omega = ]a_1^-,a_1^+[\times ]a_2^-,a_2^+[$. Then condition (Reachability) is vacuous for all $x\in X$, in same sense as in Proposition \ref{prop:ReachabilityVacuous}.
\end{corollary}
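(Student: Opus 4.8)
The plan is to reduce, for each $x \in X$, to the situation already settled in Proposition \ref{prop:ReachabilityVacuous}, exploiting that the product structure of a box propagates membership in $\Omega$ one coordinate at a time. Concretely, I fix $x \in X$ and a vector $e = (a,b) \in \mT$ with $x \pm e \in \Omega$ that is not one of $(\pm 1, 0), (0, \pm 1)$; since $\gcd(e)=1$ this forces $ab \neq 0$, and the goal is to show $e \in \cVin(x)$, so that such an $e$ can never be a genuine instance of (Reachability).

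First I would reduce to the case $a,b>0$. The right-angle rotation $\rho(y_1,y_2) := (-y_2, y_1)$ preserves both $\det(\cdot,\cdot)$ and the scalar product $\<\cdot,\cdot\>$, hence it preserves the notion of a direct acute basis and therefore the decomposition $\oplus$ of Proposition \ref{prop:Decomp}: one has $e = f\oplus g$ iff $\rho e = \rho f \oplus \rho g$. Moreover $\rho$ maps the box $\Omega = ]a_1^-,a_1^+[\times]a_2^-,a_2^+[$ to another box. Applying a suitable power of $\rho$ thus brings $e$ into the open first quadrant while keeping the domain a box and leaving the defining conditions of $\cVin(x)$ in \eqref{eqdef:Vin} intact, so no generality is lost in assuming $a,b>0$ (note that an axis reflection would \emph{not} serve here, as it reverses $\det$ and hence breaks $\oplus$).

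Next I would write $e = f\oplus g$, legitimate since $ab\neq 0$, and invoke Lemma \ref{lem:ParentsTriangle} to place $f,g$ in the triangle $[(1,0),(0,1),e]$. Writing a point of this triangle as a convex combination $\lambda_1(1,0)+\lambda_2(0,1)+\lambda_3 e$ of its vertices, and using $a,b\geq 1$, its first coordinate lies in $[0,a]$ and its second in $[0,b]$; hence each coordinate of $f$ and of $g$ is squeezed between $0$ and the corresponding coordinate of $e$. The box structure then finishes the argument coordinatewise: from $x\in\Omega$ and $x\pm e\in\Omega$ we have $a_1^- < x_1 - a$ and $x_1 + a < a_1^+$, so $0\leq f_1\leq a$ gives $a_1^- < x_1 - a \leq x_1 - f_1 \leq x_1 + f_1 \leq x_1 + a < a_1^+$, and symmetrically for the second coordinate and for $g$. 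Thus $x\pm f, x\pm g\in\Omega$, which is exactly $e\in\cVin(x)$ by \eqref{eqdef:Vin}, contradicting the standing assumption and proving the claim.

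The only genuinely delicate point I anticipate is the sign reduction: one must use a symmetry preserving the $\oplus$ operation, which singles out the right-angle rotation and excludes the more obvious coordinate reflections. Everything after that is the same coordinatewise propagation that makes box domains so well behaved, and which already underlies Proposition \ref{prop:ReachabilityVacuous}.
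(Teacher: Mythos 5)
Your proof is correct, but it takes a genuinely different route from the paper's. The paper's proof of Corollary \ref{corol:ReachabilityVacuousBox} is a two-line reduction that never touches the decomposition $e = f \oplus g$: since $\gcd(e)=1$ and $e$ is not axis-aligned, both $|e_1|,|e_2| \geq 1$, so the box inequalities $a_i^- < x_i - |e_i|$ and $x_i + |e_i| < a_i^+$ immediately give $x \pm (1,0),\, x \pm (0,1) \in \Omega$; this is exactly the \emph{hypothesis} of Proposition \ref{prop:ReachabilityVacuous}, which is then cited as a black box. You instead re-run the \emph{proof} of that proposition inside the box: quadrant reduction, Proposition \ref{prop:Decomp}, Lemma \ref{lem:ParentsTriangle}, and then a coordinatewise squeeze $0 \leq f_i, g_i \leq |e_i|$ in place of the convexity argument $[(1,0),(0,1),e] \subset \Omega$ that the paper uses. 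What your version buys: it is self-contained modulo Lemma \ref{lem:ParentsTriangle}, it uses only the product structure of the box rather than its convexity, and it makes rigorous the quadrant reduction that the paper's proof of Proposition \ref{prop:ReachabilityVacuous} dispatches with an unjustified ``without loss of generality'' (your observation that the right-angle rotation preserves $\det$ and $\<\cdot,\cdot\>$, hence $\oplus$, is exactly the missing justification). What it costs: length, and duplication of work already encapsulated in the proposition — the crisp point you missed is that for a box the proposition's hypothesis can be verified directly, making all of the $\oplus$ machinery unnecessary a second time. One small quibble: your parenthetical that reflections ``would not serve'' is overstated; a reflection $\sigma$ sends a direct acute basis $(f,g)$ to the direct acute basis $(\sigma g, \sigma f)$, so it preserves the unordered pair $\{f,g\}$ in the decomposition, which is all that membership in $\cVin(x)$ requires. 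This does not affect the validity of your argument, since the rotation you chose works.
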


\begin{proof}
Let $x = (x_1,x_2) \in X$, and let $e =(e_1,e_2) \in \mT \sm \cVin(x)$, distinct from $(\pm 1,0)$ and $(0, \pm 1)$ and such that $x\pm e \in \Omega$. Since $\gcd(e) = 1$, both $e_1$ and $e_2$ are non-zero integers. Since $x\pm e \in \Omega$, we have $a_i^- < x_i - |e_i|$, $a_i^+ > x_i + |e_i|$, for each $i \in \{1,2\}$. Hence $x\pm (1,0), x\pm (0,1) \in \Omega$. Applying Proposition \ref{prop:ReachabilityVacuous} we conclude the proof.
\end{proof}

We conclude this section with the proof of Proposition \ref{prop:PositiveDifferences}, in Corollaries \ref{corol:PositiveDifferences1} and \ref{corol:PositiveDifferences2}, which ties the positivity of the MA-LBR operator with a local discretization of convexity.

\begin{lemma}
\label{lem:hDiff}
The map $h$ is non-decreasing in all its variables on $\R_+^3$. For all $a,b,c \in \R_+$ we have $\min \{ab, bc, ca\} \geq h(a,b,c) \geq \frac 3 4 \min\{a,b,c\}^2$. 
\end{lemma}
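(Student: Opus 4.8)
The statement splits into three claims — monotonicity, the upper bound, and the lower bound — which I would treat separately, exploiting throughout the symmetry of $h$ in its three arguments and the partition of $\R_+^3$ into the region $K$ (where $h=h_1$) and the three ``degenerate'' cones $R_a := \{a\ge b+c\}$, $R_b$, $R_c$ (where $h$ equals the product of the two smaller variables).

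For monotonicity it suffices, by symmetry, to show that $a \mapsto h(a,b,c)$ is non-decreasing for fixed $b,c$. First I would record that $h$ is continuous: $h_1$ is a polynomial, the products $bc,ca,ab$ are continuous, and they agree with $h_1$ along the common boundaries (e.g.\ $h_1(b+c,b,c)=bc$), which is precisely the equality case of Lemma \ref{lem:hh1}. Then I would compute the partial derivative in $a$ on each region: $\frac{\partial h}{\partial a}=0$ on $R_a$, $=c\ge0$ on $R_b$, $=b\ge0$ on $R_c$, and $\frac{\partial h}{\partial a}=\frac12(b+c-a)\ge0$ on $K$, the last sign being guaranteed exactly by the defining constraint $a\le b+c$ of $K$. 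A continuous function that is piecewise $C^1$ with non-negative derivative is non-decreasing, which closes this part.

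The upper bound rests on the algebraic identity $4\big(ab-h_1(a,b,c)\big)=(a+b-c)^2\ge0$, together with its two cyclic permutations; these give $h_1\le\min\{ab,bc,ca\}$ on all of $\R_+^3$. On $K$ we have $h=h_1$ and are done. Off $K$, say on $R_a$, we have $h=bc$, and since $a\ge b+c$ forces $a\ge b$ and $a\ge c$ we get $bc\le ca$ and $bc\le ab$, so again $h=bc=\min\{ab,bc,ca\}$.

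For the lower bound set $m=\min\{a,b,c\}$ and assume without loss of generality $m=a$, so that $b=a+s$, $c=a+t$ with $s,t\ge0$. If $a\ge b+c$ then $a=b=c=0$ and the claim is trivial; if $b\ge a+c$ (or symmetrically $c\ge a+b$) then $h=ca\ge a^2\ge\tfrac34 a^2$ since $c\ge a$. The remaining, and decisive, case is $(a,b,c)\in K$, where a short computation gives $4h_1-3a^2=2a(s+t)-(s-t)^2$. Here membership in $K$ supplies $|s-t|\le a$, and combined with the elementary $|s-t|\le s+t$ this yields $(s-t)^2\le a(s+t)$, whence $4h_1-3a^2\ge a(s+t)\ge0$. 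I expect this last case to be the main obstacle: it is the only point where both the triangle constraint defining $K$ and the nonnegativity of $s,t$ must be used simultaneously, and the reduction to $2a(s+t)\ge(s-t)^2$ is what makes the constant $\tfrac34$ come out exactly.
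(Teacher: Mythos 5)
Your proof is correct, and its first part (monotonicity via continuity across the interfaces plus piecewise non-negative partial derivatives) coincides with the paper's argument. Where you diverge is in how the two bounds are obtained. The paper derives \emph{both} bounds as immediate corollaries of the monotonicity just proved: for the upper bound, if $(a,b,c)\in K$ then $h(a,b,c)\leq h(b+c,b,c)=bc$ by pushing $a$ up to $b+c$ (and off $K$ one has $h=bc$ by definition), cyclically giving $h\leq\min\{ab,bc,ca\}$; for the lower bound, pushing all three variables down to $\delta:=\min\{a,b,c\}$ gives $h(a,b,c)\geq h(\delta,\delta,\delta)=\tfrac34\delta^2$ in one line. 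You instead re-derive both bounds algebraically: the identity $4\bigl(ab-h_1\bigr)=(a+b-c)^2$ (which is just the permuted form of the computation already inside Lemma \ref{lem:hh1}) handles the upper bound, and the substitution $b=a+s$, $c=a+t$ with the exact deficit $4h_1-3a^2=2a(s+t)-(s-t)^2$, controlled via $|s-t|\leq a$ from membership in $K$, handles the lower bound. Both routes are valid; the paper's is shorter and makes clear that the bounds are structurally consequences of monotonicity, whereas yours is self-contained case analysis and yields slightly more information (an explicit expression for the gap $h_1-\tfrac34\min\{a,b,c\}^2$), at the cost of not exploiting the first claim you had already established.
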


\begin{proof}
First point. One easily checks that the piecewise definitions \eqref{eqdef:h} of $h(a,b,c)$ agree on the interface $\partial K$ \eqref{eqdef:K}, i.e.\ when $a=b+c$ or $b=a+c$ or $c=a+b$; hence $h$ is continuous. We then compute $\nabla h(a,b,c) = (b+c-a,\, a+c-b,\, a+b-c)/2$ for all $(a,b,c)\in K$, and $\nabla h(a,b,c) = (0,c,b)$ when $a\geq b+c$ (resp.\ likewise permuting the roles of $a,b,c$). Hence the components of $\nabla h$ are non-negative everywhere, and therefore $h$ is non-decreasing in all its variables.

Second point: If $a \geq b+c$ then $h(a,b,c) = b c$ by definition \eqref{eqdef:h}, and otherwise $h(a,b,c) \leq h(b+c,b,c) = b c$ by the first point. Hence $h(a,b,c) \leq b c$ for all $a,b,c \in \mR^+$. Likewise $h(a,b,c) \leq bc$ and $h(a,b,c) \leq ca$, thus $h(a,b,c) \leq \min\{ab,bc,ca\}$ as announced. Finally, denoting $\delta := \min \{a,b,c\}>0$ we obtain $h(a,b,c) \geq h(\delta, \delta, \delta) = \frac 3 4 \delta^2$.
\end{proof}

\begin{corollary}
\label{corol:PositiveDifferences1}
Let $u \in \FX$, let $\cV$ be a family of stencils, and let $x \in X$. If $\Delta_e u(x)>0$ for all $e \in \cV(x)$ then $\cD_\cV u (x) > 0$.
\end{corollary}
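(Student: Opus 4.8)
The plan is to reduce the claim to the definition of $\cD_\cV u(x)$ as a minimum over superbases and exploit the lower bound on $h$ already proved in Lemma~\ref{lem:hDiff}. Recall that
\[
\cD_\cV u(x) = \min_{\substack{(e,f,g)\in \cV(x)^3\\ \text{superbase}}} h\bigl(\Delta_e u(x), \Delta_f u(x), \Delta_g u(x)\bigr),
\]
so it suffices to show that each individual term in this minimum is strictly positive whenever all second order differences $\Delta_{e'}u(x)$, $e'\in\cV(x)$, are positive.

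\textbf{Key steps.} First I would fix an arbitrary superbase $(e,f,g)\in \cV(x)^3$ realizing (or participating in) the minimum. By hypothesis $e,f,g\in \cV(x)$, so the three quantities $a:=\Delta_e u(x)$, $b:=\Delta_f u(x)$, $c:=\Delta_g u(x)$ are each strictly positive; in particular $\delta:=\min\{a,b,c\}>0$. Then I would invoke the second point of Lemma~\ref{lem:hDiff}, which gives $h(a,b,c)\geq \tfrac34\min\{a,b,c\}^2 = \tfrac34\delta^2>0$. Since every term in the minimum defining $\cD_\cV u(x)$ is thus bounded below by a strictly positive quantity, the minimum itself is strictly positive, giving $\cD_\cV u(x)>0$.

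\textbf{A subtlety to check.} The argument above presumes that the minimum is taken over a non-empty family of superbases, so that $\cD_\cV u(x)$ is a genuine minimum of positive reals rather than $+\infty$ (or undefined) by an empty-minimum convention. This is guaranteed by the structural constraint $V_8\subset \cV(x)$ in Definition~\ref{def:Stencils}: for instance $\bigl((-1,-1),(1,0),(0,1)\bigr)$ is a superbase all of whose elements lie in $V_8\subset\cV(x)$, so at least one admissible superbase exists. I would state this explicitly to rule out the degenerate case.

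\textbf{Main obstacle.} I expect no serious obstacle here: the statement is the easy (contrapositive-friendly) direction of the equivalence in Proposition~\ref{prop:PositiveDifferences}, and all the analytic content is already packaged in the lower bound of Lemma~\ref{lem:hDiff}. The only care needed is bookkeeping---confirming that membership of a superbase in $\cV(x)^3$ forces its three vectors into $\cV(x)$ (immediate from the definition of $\cD_\cV$), and confirming the minimum ranges over a non-empty set. The genuinely harder converse implication (positivity of $\cD_\cV u(x)$ forcing positivity of \emph{every} $\Delta_e u(x)$, $e\in\cV(x)$) is presumably handled separately in Corollary~\ref{corol:PositiveDifferences2}, and does not concern us here.
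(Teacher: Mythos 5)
Your proof is correct and follows essentially the same route as the paper: both reduce the claim to the definition of $\cD_\cV u(x)$ as a finite minimum of terms $\huefg$ with $e,f,g \in \cV(x)$, and both conclude positivity of each term via the lower bound $h(a,b,c) \geq \tfrac 3 4 \min\{a,b,c\}^2$ from Lemma \ref{lem:hDiff}. Your extra check that the minimum ranges over a non-empty family (via $V_8 \subset \cV(x)$) is a sound precaution that the paper leaves implicit.
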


\begin{proof}
The operator value $\cD_\cV u(x)$ is the minimum \eqref{eqdef:DVux} of a finite collection of terms of the form $\huefg$, where $e,f,g \in \cV(x)$, and which by Lemma \ref{lem:hDiff} are positive.
\end{proof}

\begin{lemma}
\label{lem:ReversedHierarchy}
Let $\cV$ be a family of stencils, and let $x \in X$. 
If $e \in \cV(x)$ and $e=f\oplus g$, then $f,g \in \cV(x)$.
\end{lemma}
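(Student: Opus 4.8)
The plan is to prove the statement by strong induction on the depth of $e$ in the tree $\mT$, that is, on the number of edges separating $e$ from the root of its connected component, the elements of $V_8$ (the roots and isolated points of $\mT$, by Proposition \ref{prop:V8T}) being assigned depth zero. Observe first that any vector admitting a decomposition $e = f \oplus g$ has both coordinates non-zero (Proposition \ref{prop:Decomp}), hence is never one of the isolated points $(\pm 1, 0), (0, \pm 1)$; it is either a root $(\pm 1, \pm 1)$ or an interior vertex of one of the four binary trees. The base case $e \in V_8$ is immediate: the only relevant vectors are the roots, whose unique decomposition has both factors among the isolated points, e.g.\ $(1,1) = (1,0) \oplus (0,1)$, and these belong to $V_8 \subset \cV(x)$.

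For the inductive step, with $e \in \mT \sm V_8$, I would first establish that $\cV(x)$ is closed under passing to the tree-parent: if $w \in \cV(x) \sm V_8$ and $w' \to w$ is the unique edge of $\mT$ arriving at $w$ (Lemma \ref{lem:DecompEdge}), then $w' \in \cV(x)$. Assuming the contrary, the connected component of $w$ inside the subgraph $\cV(x)$ can only reach, starting from $w$, the two child-edges of $w$ (the parent edge being absent) and, recursively, the descendants of those children; this component therefore consists of $w$ together with descendants of $w$ only, none of which lies in $V_8$, since the unique $V_8$ vertex of the tree is its root, a strict ancestor of $w$. This contradicts the (Hierarchy) property of Definition \ref{def:Stencils}. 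Applying this closure to $e$ shows that its tree-parent $p$, which by Lemma \ref{lem:DecompEdge} is one of $f, g$, belongs to $\cV(x)$.

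It then remains to locate the other factor $q$, where $\{p, q\} = \{f, g\}$. By Definition \ref{def:GraphT}, the edge $p \to e$ means that $e$ is a child of $p$; writing the decomposition $p = p_1 \oplus p_2$, Proposition \ref{prop:SumChildren} identifies the two children of $p$ as $p_1 \oplus p$ and $p \oplus p_2$, so that the equality $e = p + q$ forces $q \in \{p_1, p_2\}$. Since $p \in \cV(x)$ has depth strictly smaller than $e$, the induction hypothesis applied to the decomposition $p = p_1 \oplus p_2$ yields $p_1, p_2 \in \cV(x)$, whence $q \in \cV(x)$ and finally $\{f, g\} = \{p, q\} \subset \cV(x)$.

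The delicate point is the closure-under-parent step: one cannot invoke (Hierarchy) naively, but must verify that deleting the parent truly severs $e$ from every element of $V_8$, which hinges on the precise description of $\mT$ in Proposition \ref{prop:V8T} (a single $V_8$ root per tree, all other vertices being its proper descendants). The observation that $q$ is one of the two Stern--Brocot parents of $p$, and not merely its tree-parent, is exactly what forces an inductive rather than a one-step argument; note also that only (Hierarchy), and not (Reachability), is needed here.
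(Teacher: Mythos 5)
Your proof is correct and follows essentially the same route as the paper's: an induction (yours on tree depth, the paper's on $\|e\|^2$, which are interchangeable since both decrease when passing to the parent), with the parent of $e$ placed in $\cV(x)$ via (Hierarchy), and the induction hypothesis applied to the parent's decomposition to recover the remaining factor. The only difference is one of detail: the paper asserts ``by (Hierarchy) one has $e' \in V$'' without elaboration, whereas you spell out the closure-under-parent argument (severing the parent edge would leave a component of $e$ consisting only of descendants, hence disjoint from $V_8$), which is a welcome clarification of a step the paper leaves implicit.
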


\begin{proof}
Denote $V := \cV(x)$. We proceed by induction on the integer $\|e\|^2$. If $\|e\|^2 = 1$, then it admits no decomposition of the form $f\oplus g$. If $\|e\|^2 = 2$, then $e \in V_8$, and therefore $f,g \in V_8 \subset V$.

If $\|e\|^2>2$ then by Proposition \ref{prop:V8T} the graph $\mT$ has an edge $e' \to e$.
We write $e' = f' \oplus g'$. By (Hierarchy) one has $e' \in V$, and by induction $f',g' \in V$. By definition of $\mT$ the vector $e$ is either $f'\oplus e'$, or $e'\oplus g'$. Hence $\{f,g\} \subset \{e',f',g'\} \subset V$ which concludes the proof.
\end{proof}

\begin{corollary}
\label{corol:PositiveDifferences2}
Let $u \in \FX$, let $\cV$ be a family of stencils, and let $x \in X$.
Then 
\begin{equation}
\label{eq:DDecomp}
\cD_\cV u(x) = 
\min\{ \huefg;\, e \in \cV(x),\, e=f \oplus g \}. 
\end{equation}
Also, if $\cD_\cV u (x) > 0$ then $\Delta_e u(x)>0$ for all $e \in \cV(x)$.
\end{corollary}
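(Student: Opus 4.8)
The plan is to establish the decomposition identity \eqref{eq:DDecomp} first, as a pure reindexing of the defining minimum \eqref{eqdef:DVux}, and then to read off the positivity statement from \eqref{eq:DDecomp} together with the bounds on $h$ in Lemma \ref{lem:hDiff}. Throughout I would exploit two symmetries: the function $h$ is symmetric in its three arguments (both branches of \eqref{eqdef:h} are), and $\Delta^+_{-v} u(x) = \Delta^+_v u(x)$ because $\Delta_{-v} = \Delta_v$. Consequently a value $\huefg$ depends only on the unordered triple of lines $\{\pm e, \pm f, \pm g\}$, not on the ordering or the signs of the vectors, which is what makes the passage between superbases and $\oplus$-decompositions lossless.

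To prove \eqref{eq:DDecomp} I would show that the two sides range over the same set of values. For ``$\cD_\cV u(x) \le$ RHS'', I take any decomposition $e = f \oplus g$ with $e \in \cV(x)$; Lemma \ref{lem:ReversedHierarchy} gives $f,g \in \cV(x)$, and since $\cV(x)$ is symmetric also $-f,-g \in \cV(x)$. As $e-f-g = 0$ and $\det(-f,-g) = \det(f,g) = 1$, the triple $(e,-f,-g)$ is a superbase contained in $\cV(x)^3$, and its value in \eqref{eqdef:DVux} equals $\huefg$. For the reverse inclusion I take a superbase $(e_0,e_1,e_2) \in \cV(x)^3$ and invoke Lemma \ref{lem:SuperbaseDecomp}: up to a permutation it equals $(e,-f,-g)$ with $e = f \oplus g$, so the distinguished vector $e$ is exactly one of $e_0,e_1,e_2$, hence $e \in \cV(x)$, and this superbase contributes the RHS term indexed by $e = f \oplus g$. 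The two inclusions give \eqref{eq:DDecomp}.

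For the positivity claim, suppose $\cD_\cV u(x) > 0$. By \eqref{eq:DDecomp} every term $\huefg$ with $e \in \cV(x)$, $e = f \oplus g$, is strictly positive. Lemma \ref{lem:hDiff} yields $h(a,b,c) \le \min\{ab,bc,ca\}$, so $h > 0$ forces $ab, bc, ca > 0$; since the arguments $\Delta^+$ are non-negative, this means $a,b,c > 0$, whence $\Delta_e u(x), \Delta_f u(x), \Delta_g u(x) > 0$. Any $e \in \cV(x)$ with both coordinates nonzero admits such a decomposition by Proposition \ref{prop:Decomp}, giving $\Delta_e u(x) > 0$ directly. The only vectors of $\cV(x)$ not covered this way are the axis vectors $(\pm 1,0),(0,\pm 1)$; these appear as the factors $f,g$ in the decomposition of the diagonal elements of $V_8 \subset \cV(x)$, e.g.\ $(1,1) = (1,0)\oplus(0,1)$, so applying the same argument to $e = (1,1)$ forces $\Delta_{(1,0)} u(x), \Delta_{(0,1)} u(x) > 0$. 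By sign-invariance of $\Delta$ this exhausts $\cV(x)$.

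I expect the reindexing in \eqref{eq:DDecomp} to be the only genuinely delicate step: one must check that the superbase built from a decomposition based at $e \in \cV(x)$ lands entirely in $\cV(x)^3$ (this is where the closure Lemma \ref{lem:ReversedHierarchy} and the symmetry of $\cV(x)$ are both needed), and conversely that the distinguished vector produced by Lemma \ref{lem:SuperbaseDecomp} belongs to $\cV(x)$. Once these membership bookkeeping points are settled, the remainder is a direct application of the elementary inequality $h \le \min\{ab,bc,ca\}$, and the small case analysis isolating the axis vectors of $V_8$.
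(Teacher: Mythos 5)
Your proposal is correct and takes essentially the same approach as the paper: both directions of \eqref{eq:DDecomp} are obtained by the same reindexing, using Lemma \ref{lem:ReversedHierarchy} plus stencil symmetry for one inequality and Lemma \ref{lem:SuperbaseDecomp} for the other, and the positivity claim is read off from the bound $h(a,b,c) \leq \min\{ab,bc,ca\}$ of Lemma \ref{lem:hDiff}. The only cosmetic difference is how the axis vectors are treated: the paper picks a superbase $(e,f,g)$ with $f,g \in V_8$ and invokes \eqref{eqdef:DVux} directly, while you apply \eqref{eq:DDecomp} to the decomposition $(1,1)=(1,0)\oplus(0,1)$ with $(1,1) \in V_8 \subset \cV(x)$ --- the same argument in a slightly different dress.
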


\begin{proof}
First point. Note the symmetries (i) $\Delta_e u(x) = \Delta_{-e} u(x)$ for any $e \in \mZ^2$, and (ii) $e \in \cV(x)$ iff $-e \in \cV(x)$ by Definition \ref{def:Stencil}. We denote by $D$ (resp.\ $D'$) the left (resp.\ right) hand side of \eqref{eq:DDecomp}. 
If $e \in \cV(x)$ and $e=f\oplus g$, then $(e,-f,-g)$ is a superbase of $\mZ^2$ and $f,g \in \cV(x)$ by Lemma \ref{lem:ReversedHierarchy}; hence $D \leq D'$. Conversely let $(e,f,g) \in \cV(x)^3$ be a superbase of $\mZ^2$. Up to reordering these vectors we may assume that $\|e\| \geq \max \{\|f\|, \|g\|\}$ and $\det(f,g)=1$. Then $-e = f \oplus g$ by Lemma \ref{lem:SuperbaseDecomp}, which implies $D' \leq D$ and establishes \eqref{eq:DDecomp}.

Second point. Let $e \in \cV(x)$. If $e=f \oplus g$ then $0 < \cD_\cV u(x) \leq \huefg$ by \eqref{eq:DDecomp}, and therefore $\Delta_e u(x) >0$ by Lemma \ref{lem:hDiff}. Otherwise, $e$ is among $(\pm 1,0)$ or $(0, \pm 1)$, hence we may choose $f,g \in V_8 \subset \cV(x)$ such that $e,f,g$ is a superbase. Using \eqref{eqdef:DVux} and Lemma \ref{lem:hDiff} we again obtain $\Delta_e u(x) >0$.
\end{proof}

\subsection{Depth-first exploration within the Stern-Brocot tree}
\label{sec:Algorithm}

In this section, we interpret the MA-LBR operator $\cDu_\cV$ defined in Algorithm \ref{algo:D2p} as a depth-first transversal of a subtree of the Stern-Brocot tree. 
The concept of depth-first exploration is introduced in Algorithm \ref{algo:DepthFirst}.

\begin{algorithm}
\caption{Depth-first exploration of a finite ordered tree $T$, with root $e_*$}
\label{algo:DepthFirst}
\begin{tabular}{l}
\textbf{Initialize} a mutable list $L \leftarrow [e_*]$.\\
\textbf{While} L is non empty \textbf{do}\\
\phantom{bla} Remove from $L$ its first element $e$, and denote by $e_1, \cdots, e_n$ its children in the tree $T$.\\
\phantom{bla} Prepend $[e_1,\cdots,e_n]$ to $L$.
\end{tabular}
\end{algorithm}

We introduce in Algorithm \ref{algo:MinTree} a simplified version of the adaptive MA-LBR operator $\cDu_\cV$. It incorporates a dummy variable $L$ used to emulate Algorithm \ref{algo:DepthFirst}, see Proposition \ref{prop:MinTreePlus} below. $G_i$ denotes the $i$th element of the mutable list $G$. 

\begin{algorithm}
\caption{Minimization on a subtree of the Stern-Brocot tree. (The final value of $\mD$.)}
\label{algo:MinTree}
\begin{tabular}{l}
\textbf{Input:} a finite set $V \subset \Z^2$, and a map $\vp : V \to \mR$.\\
\textbf{Initialize} a mutable vertex $f \leftarrow (1,0)$, and a mutable list $G \leftarrow [(0,1)]$. Set also $\mD \leftarrow + \infty.$\\
\textbf{While} $G$ is non-empty \textbf{do}\\
\phantom{bla} Denote by $g:=G_1$ the first element of $G$, and set $e := f+g$. \\
\phantom{bla} Denote $n := {\rm length}(G)$, and introduce the list $L:=[f+g,\ G_1+G_2, \cdots, G_{n-1}+G_n]$.\\
\phantom{bla} \textbf{If} $e \in V$\\
\phantom{blabla} \textbf{then} prepend $e$ to $G$, and set $\mD \leftarrow \min\{\mD, \vp(e)\}$\\
\phantom{blabla} \textbf{else} remove $g$ from $G$ and set $f \leftarrow g$ 
\end{tabular}
\end{algorithm}

\begin{lemma}
\label{lem:LSum}
At each iteration of the {\rm While} loop in Algorithm \ref{algo:MinTree}, one actually has $L = [f \oplus g, G_1 \oplus G_2, \cdots, G_{n-1} \oplus G_n]$. 
\end{lemma}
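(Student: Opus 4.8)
The plan is to prove Lemma~\ref{lem:LSum} by induction on the iterations of the \textbf{While} loop, showing that the invariant
\[
L = [f \oplus g,\ G_1 \oplus G_2,\ \cdots,\ G_{n-1}\oplus G_n]
\]
is maintained. The crux is that $L$ is defined in the algorithm using ordinary vector addition $+$, whereas the claim asserts these sums are in fact the structured sums $\oplus$ of Definition~\ref{def:OPlus}. So the real content is that, at every iteration, each \emph{consecutive pair} $(f, G_1), (G_1, G_2), \ldots, (G_{n-1}, G_n)$ is a direct acute basis of $\Z^2$, since by Proposition~\ref{prop:Decomp} this is exactly the condition under which $a+b$ coincides with $a \oplus b$.

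First I would set up the strengthened invariant: at the start of each loop iteration, writing the current list of "frontier" vectors as $(f, G_1, \ldots, G_n)$, every consecutive pair $(f,G_1)$ and $(G_i, G_{i+1})$ satisfies $\det(\cdot,\cdot)=1$ and $\langle\cdot,\cdot\rangle \geq 0$, i.e.\ is a direct acute basis. This holds at initialization, where $f=(1,0)$ and $G=[(0,1)]$: the single pair $(f,G_1)=((1,0),(0,1))$ has determinant $1$ and scalar product $0$, hence is a direct acute basis, and indeed $(1,0)\oplus(0,1)=(1,1)$. I would then verify the invariant is preserved across both branches of the \textbf{If}. In the \emph{then} branch, $e=f+g=f\oplus G_1$ is prepended to $G$, so the new consecutive pairs involving $e$ are $(f,e)$ and $(e,G_1)$; by Proposition~\ref{prop:SumChildren}, since $f \oplus G_1$ holds, both $(f, f\oplus G_1)=(f,e)$ and $(f\oplus G_1, G_1)=(e,G_1)$ are direct acute bases, and all other consecutive pairs are unchanged. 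In the \emph{else} branch, $g=G_1$ is removed and $f$ is reassigned to $g$; the surviving consecutive pairs are a subset of the previous ones (the pair $(f_{\mathrm{old}}, G_1)$ disappears and $(G_1,G_2)$ becomes the new leading pair $(f_{\mathrm{new}}, G_1^{\mathrm{new}})$), so the invariant persists.

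Granting the invariant, the conclusion is immediate: each entry of $L$ is a sum $a+b$ of a consecutive pair $(a,b)$ that is a direct acute basis, hence by Proposition~\ref{prop:Decomp} equals $a\oplus b$, which is precisely the asserted form of $L$.

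I expect the main obstacle to be purely bookkeeping rather than conceptual: one must track carefully how the mutable list $G$ and the variable $f$ are updated in each branch, and confirm that the set of consecutive pairs transforms exactly as claimed, in particular that removing $g$ and relabeling $f\leftarrow g$ does not create a new consecutive pair that was not already validated. The genuinely mathematical input is light---it is entirely supplied by Proposition~\ref{prop:SumChildren} (which guarantees that the newly created pairs $(f,e)$ and $(e,g)$ remain direct acute bases whenever $e=f\oplus g$) together with the correspondence of Proposition~\ref{prop:Decomp}---so the difficulty lies in stating the loop invariant precisely enough that both branches can be checked cleanly.
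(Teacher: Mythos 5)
Your proof is correct and follows essentially the same route as the paper: an induction over the loop iterations whose base case is the pair $((1,0),(0,1))$, whose \emph{then}-branch is handled by Proposition~\ref{prop:SumChildren}, and whose \emph{else}-branch simply discards one pair. Your reformulation of the invariant (``all consecutive pairs of $(f,G_1,\dots,G_n)$ are direct acute bases'') is, via Definition~\ref{def:OPlus} and Proposition~\ref{prop:Decomp}, exactly the paper's statement that $L=[f\oplus g, G_1\oplus G_2,\cdots,G_{n-1}\oplus G_n]$, so the two arguments coincide.
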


\begin{proof}
In the first iteration $L = [(1,0) \oplus (0,1)]$. 
We proceed by induction on the iteration index. Assume that $L = [f \oplus g, G_1 \oplus G_2, \cdots, G_{n-1} \oplus G_n]$. 
Since $e = f \oplus g$, we have $f+e = f\oplus e$ and $e+g = e\oplus g$ by Proposition \ref{prop:SumChildren}. If $e \in V$, then at the next iteration $L' = [f\oplus e, e\oplus g, G_1\oplus G_2, \cdots, G_{n-1}\oplus G_n]$. On the other hand, if $e\notin V$ then at the next iteration $L' = [G_1 \oplus G_2, \cdots, G_{n-1} \oplus G_n]$.
\end{proof}


In the following any set $V \subset \Z^2$ is \emph{regarded as a graph}, which edges are those of $\mT$ having their endpoints in $V$, see Definition \ref{def:GraphT}. 
In particular the standard Stern-Brocot tree has vertices $\mT^+ := \{(a,b) \in \mT; \, a>0, b>0\}$, and is a complete infinite binary tree of root $(1,1)$. We say that a binary tree is \emph{proper} iff its nodes have either two children (internal nodes) or zero (leaves). 

\begin{definition}
Let $V \subset \mZ^2$ be a finite subtree of $\mT^+$ with root $(1,1)$. 
We denote by $V_*\subset \mZ^2$ the proper binary subtree of $\mT^+$ which set of internal nodes is $V$. Note that $\#(V_*) = 2 \#(V)+1$.
\end{definition}


\begin{proposition}
\label{prop:MinTreePlus}
Let $V \subset \mZ^2$ be a finite subtree of $\mT^+$ with root $(1,1)$.
Algorithm \ref{algo:MinTree} conducts a depth-first transversal of the tree $V_*$, and at termination $\mD = \min \{\vp(e); \, e \in V\}$.
\end{proposition}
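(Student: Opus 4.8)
The plan is to recognize Algorithm \ref{algo:MinTree} as the generic depth-first exploration of Algorithm \ref{algo:DepthFirst}, applied to the tree $V_*$ rooted at $(1,1)$, in disguise. The dictionary between the two is the dummy list $L$: I will show that at the start of every \textbf{While} iteration, the list $L$ appearing in Algorithm \ref{algo:MinTree} coincides with the list $L$ maintained by Algorithm \ref{algo:DepthFirst} run on $V_*$. Granting this, the head $e$ of $L$ (equivalently $e = f+g$) is the node currently processed, and the conclusion follows at once: $\mD$ is updated by $\mD \leftarrow \min\{\mD, \vp(e)\}$ precisely when $e \in V$, i.e. exactly at the internal nodes of $V_*$, which by definition are the elements of $V$; since a depth-first exploration processes each vertex of a finite tree exactly once and $\mD$ starts at $+\infty$, its terminal value is $\min\{\vp(e);\, e\in V\}$.

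It remains to match the two lists. Lemma \ref{lem:LSum} already supplies the invariant $L = [f\oplus g,\, G_1\oplus G_2, \cdots, G_{n-1}\oplus G_n]$, and since $L$ and $G$ then share the same length $n$, the stopping tests ($G$ empty versus $L$ empty) agree. The base case is immediate, as $f=(1,0)$, $G=[(0,1)]$ give $L = [(1,0)\oplus(0,1)] = [(1,1)]$, which is the initial list of Algorithm \ref{algo:DepthFirst} with root $(1,1)$. For the inductive step, write $G = [g_1, \ldots, g_n]$ (so $g = g_1$), whence the head of $L$ is $e = f\oplus g_1$; I compare each branch of the conditional with the single update of Algorithm \ref{algo:DepthFirst} (delete the head, prepend its children). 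In the branch $e\in V$, prepending $e$ to $G$ turns $L$ into $[f\oplus e,\, e\oplus g_1,\, g_1\oplus g_2, \cdots]$, and by Proposition \ref{prop:SumChildren} the two new heads $f\oplus e$ and $e\oplus g_1$ are exactly the children $e\to f\oplus e$, $e\to e\oplus g_1$ of $e$ in $\mT$ (Definition \ref{def:GraphT}); this is the depth-first step at an internal node. In the branch $e\notin V$, removing $g_1$ and setting $f\leftarrow g_1$ turns $L$ into $[g_1\oplus g_2, \cdots, g_{n-1}\oplus g_n]$, i.e. merely deletes the head; this is the depth-first step at a leaf.

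For the dichotomy ``internal node / leaf'' to be legitimate, the head $e$ must always be a genuine vertex of $V_*$, and checking this is the step requiring the most care. I argue by induction: the root $(1,1)$ lies in $V_*$, and new heads are emitted only in the branch $e\in V$, where they are the two $\mT^+$-children of the internal node $e$ and hence belong to $V_*$ by the very definition of the proper binary tree whose internal node set is $V$. Thus every head is a vertex of $V_*$, internal exactly when it lies in $V$, so the two branches do realize the depth-first updates for internal nodes and for leaves. As $V_*$ is finite, the exploration — and with it Algorithm \ref{algo:MinTree} — terminates after $\#(V_*) = 2\#(V)+1$ iterations having visited each vertex once, which finishes the proof. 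The genuinely delicate point is precisely this verification that the iteration never leaves $V_*$: it rests on combining Lemma \ref{lem:LSum}, which certifies $e = f\oplus g$ so that $f,g$ are the true Stern-Brocot parents of $e$, with Proposition \ref{prop:SumChildren} to identify the emitted pair with the actual tree children.
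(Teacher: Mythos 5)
Your proof is correct and follows essentially the same route as the paper's: both identify the internal nodes of $V_*$ with $V$ and its leaves with $V_*\sm V$, and both use the invariant of Lemma \ref{lem:LSum} to match the updates of the dummy list $L$ with the depth-first steps of Algorithm \ref{algo:DepthFirst}, concluding since $\mD$ is updated exactly at the elements of $V$. The only difference is that you spell out explicitly (base case, inductive step, and the check that every head stays in $V_*$) what the paper compresses into ``inspection of the proof of Lemma \ref{lem:LSum}''.
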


\begin{proof}
Let $e = f \oplus g \in V_*$. If $e \in V$, then $e$ has two children in $V_*$, namely $f\oplus e$ and $e \oplus g$. If $e \in V_*\sm V$, then $e$ is a leaf of $V_*$. Inspection of the proof of Lemma \ref{lem:LSum} shows that the list $L$ is updated precisely as expected for a depth-first transversal of $V_*$, see Algorithm \ref{algo:DepthFirst}. Since the operation $\mD \leftarrow \min\{\mD, \vp(e)\}$ is performed only for elements of $V$, it evaluates the minimum of $\vp$ on $V$.
\end{proof}

We finally introduce a  slight generalization of Proposition \ref{prop:MinTreePlus}, so as to more closely fit the context of Algorithm \ref{algo:D2p}, defining $\cDu_\cV u(x)$. 

\begin{corollary}
\label{corol:NonConnected}
Consider a finite finite set $V_0 \subset \Z^2$, and a map $\vp : V_0 \to \R$. 
Applying Algorithm \ref{algo:MinTree} to $(V_0, \vp)$ yields at termination $\mD = \min\{\vp(e); \, e \in V\}$, where $V\subset \mZ^2$ denotes the connected component of $(1,1)$ in $V_0$. 
\end{corollary}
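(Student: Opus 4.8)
The plan is to reduce Corollary \ref{corol:NonConnected} to the already-established Proposition \ref{prop:MinTreePlus} by observing that Algorithm \ref{algo:MinTree}, when run on an arbitrary finite set $V_0$, only ever \emph{sees} the part of $V_0$ that is reachable from the root $(1,1)$ via the edges of $\mT$. The key structural fact is that the membership test ``$e \in V$'' inside the loop is evaluated only at vertices $e$ that are descendants of $(1,1)$ in the Stern-Brocot tree $\mT^+$: indeed, the algorithm starts at $f = (1,0)$, $G = [(0,1)]$, so the first candidate is $e = (1,0)+(0,1) = (1,1)$, and every subsequent candidate $e = f+g$ is obtained by the child operations $f \oplus e$ and $e \oplus g$ of Proposition \ref{prop:SumChildren}, hence lies strictly below $(1,1)$ in $\mT$.

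First I would make precise the connected component $V := V_0 \cap \Desc((1,1))$, where the connected component of $(1,1)$ in the graph $V_0$ (equipped with the induced edges of $\mT$) coincides with those elements of $V_0$ reachable from $(1,1)$ along edges whose intermediate endpoints also lie in $V_0$. The decisive point is that the Refinement/membership test $e \in V_0$ prunes a branch exactly when $e \notin V_0$, so the algorithm never descends past a vertex absent from $V_0$; consequently the set of vertices at which the test returns ``true'' is precisely the connected component $V$ of $(1,1)$ in $V_0$, and this $V$ is automatically a finite subtree of $\mT^+$ rooted at $(1,1)$.

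Next I would invoke Proposition \ref{prop:MinTreePlus}: running Algorithm \ref{algo:MinTree} with input $V_0$ produces exactly the same sequence of loop states as running it with input $V$, because at every iteration the only way $V_0$ influences the control flow is through the test $e \in V_0$, and for every $e$ actually tested (a descendant of $(1,1)$) one has $e \in V_0 \iff e \in V$ by the definition of the connected component — any $e \in V_0$ reached from $(1,1)$ by a chain of ``true'' tests lies in $V$, and conversely. Therefore the two runs perform identical depth-first transversals of $V_*$, visit the same internal nodes, and perform the update $\mD \leftarrow \min\{\mD, \vp(e)\}$ on the same set of vertices, namely $V$. By Proposition \ref{prop:MinTreePlus} the terminating value is $\mD = \min\{\vp(e);\, e \in V\}$, as claimed.

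The main obstacle I anticipate is purely bookkeeping rather than conceptual: I must verify rigorously that the two runs (on $V_0$ and on $V$) stay in lockstep, which amounts to an induction on the iteration index showing that the mutable state $(f, G)$ is identical in both runs at every step. This hinges on the claim that no tested vertex $e$ ever lies in $V_0 \sm V$ — equivalently that a vertex of $V_0$ \emph{not} connected to $(1,1)$ is never reached, because to reach it the algorithm would have had to pass through an absent ancestor and thereby prune the branch. Establishing this cleanly requires the earlier observation (from the proofs of Lemma \ref{lem:LSum} and Proposition \ref{prop:MinTreePlus}) that every candidate $e = f+g$ generated during the loop equals $f \oplus g$ and is a descendant of the root, so that reachability in the algorithm coincides exactly with connectedness to $(1,1)$ within $V_0$.
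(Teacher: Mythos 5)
Your proposal is correct and follows exactly the route the paper intends: the corollary is stated without proof as an immediate consequence of Proposition \ref{prop:MinTreePlus}, and your lockstep argument — every membership test occurs either at $(1,1)$ or at a child of an already accepted vertex, so for all tested $e$ one has $e \in V_0 \iff e \in V$, hence the run on $V_0$ coincides with the run on the rooted subtree $V$, to which Proposition \ref{prop:MinTreePlus} applies — is precisely that intended reduction. The only detail worth adding is the degenerate case $(1,1) \notin V_0$, where $V = \emptyset$, Proposition \ref{prop:MinTreePlus} does not formally apply, and one instead observes directly that the first test fails, so the algorithm halts with $\mD = +\infty = \min_\emptyset \vp$.
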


\begin{corollary}
\label{corol:MinTree}
Let $V_0\subset \Z^2$ be a stencil, and let $\vp : V_0 \to \R$ be an even function. 
Let $V\subset \mZ^2$ denote the connected components of $\pm (1,1)$ and $\pm (1,-1)$ in $V_0$.
Apply Algorithm \ref{algo:MinTree} to $(V_0, \vp)$ with the modified initialization $G \leftarrow [(0,1), (-1,0)]$. Then at termination  $\mD = \min\{ \vp(e); \, e \in V\}$.
\end{corollary}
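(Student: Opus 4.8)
The plan is to reduce Corollary~\ref{corol:MinTree} to the already-established Corollary~\ref{corol:NonConnected} by a symmetry argument, exploiting the fact that $\vp$ is even and that the modified initialization $G \leftarrow [(0,1),(-1,0)]$ simply launches \emph{two} independent depth-first explorations instead of one. First I would recall the structure of $V_8$ given by Proposition~\ref{prop:V8T}: the four points $(\pm 1,0),(0,\pm 1)$ are isolated in $\mT$, while each of $\pm(1,1)$ and $\pm(1,-1)$ is the root of a complete infinite binary tree contained in one open quadrant. Consequently the connected component of $(1,1)$ in the stencil $V_0$ lives entirely in the first (open) quadrant, and that of $(1,-1)$ in the fourth quadrant; these two components are disjoint and, since $V_0$ is symmetric, they are the mirror images (through the origin) of the components of $(-1,-1)$ and $(-1,1)$ respectively.

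Next I would analyze the effect of the initialization $G \leftarrow [(0,1),(-1,0)]$. With $f = (1,0)$ and $G = [(0,1),(-1,0)]$, the first loop iteration considers $e = f + G_1 = (1,1)$, which is precisely the root explored by the standard algorithm of Corollary~\ref{corol:NonConnected}. The key observation is that the two entries of the initial list $G$ correspond, via the running pairs $(f,g)$, to the two roots $(1,0)\oplus(0,1) = (1,1)$ and $(0,1)\oplus(-1,0) = (-1,1)$: by Lemma~\ref{lem:LSum} the auxiliary list always records $L = [f\oplus g, G_1\oplus G_2, \ldots]$, and here the consecutive pair $(0,1),(-1,0)$ produces $(0,1)\oplus(-1,0) = (-1,1)$. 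Thus the algorithm performs a depth-first transversal of the subtree rooted at $(1,1)$ (the component of $(1,1)$), and once that list segment is exhausted it advances $f \leftarrow (0,1)$ and begins, with the pair $(0,1),(-1,0)$, a second depth-first transversal of the subtree rooted at $(-1,1)$, i.e.\ the component of $(-1,1)$. Applying Corollary~\ref{corol:NonConnected} (or its proof) to each segment separately, $\mD$ is updated to the minimum of $\vp$ over the union of these two components.

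Finally I would invoke the evenness of $\vp$ to rewrite this minimum in the form claimed. The component of $(-1,1)$ explored above is, by the symmetry of $V_0$ and Proposition~\ref{prop:V8T}, exactly $-1$ times the component of $(1,-1)$; since $\vp$ is even, $\min\{\vp(e);\, e \text{ in component of } (-1,1)\} = \min\{\vp(e);\, e \text{ in component of } (1,-1)\}$. Likewise the component of $(1,1)$ has the same $\vp$-minimum as that of $(-1,-1)$. Hence the value of $\mD$ at termination equals $\min\{\vp(e);\, e \in V\}$ with $V$ the union of the four components of $\pm(1,1)$ and $\pm(1,-1)$, as announced. The main obstacle I anticipate is the careful bookkeeping in the second step: one must verify that the depth-first transversal genuinely \emph{decouples} into two independent explorations, never mixing the two quadrants, and that the advancement $f \leftarrow g$ correctly hands control from the first root to the second. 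This requires checking that when the sublist generated from the root $(1,1)$ is exhausted, the list $G$ has been reduced back to exactly $[(0,1),(-1,0)]$ with $f$ set to $(1,0)$'s successor, so that the invariant of Lemma~\ref{lem:LSum} cleanly restarts the second transversal; this is precisely the content that makes the reduction to Corollary~\ref{corol:NonConnected} legitimate rather than merely suggestive.
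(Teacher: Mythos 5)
Your proposal is correct and takes essentially the same route as the paper's own proof: the paper likewise splits the run into two independent executions, the first (standard initialization) computing $\min\{\vp(e);\, e \in V \cap \mT^+\}$ via Corollary \ref{corol:NonConnected}, the second (restarting from $f \leftarrow (0,1)$, $G \leftarrow [(-1,0)]$) computing the minimum over the component in the second quadrant, and then concludes by evenness of $\vp$ and symmetry of the stencil. Your extra bookkeeping verifying that the list cleanly hands control from the first root to the second is just a more detailed justification of the decomposition the paper asserts.
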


\begin{proof}
The execution of Algorithm \ref{algo:MinTree} with the modified initialization can be decomposed in two parts. (I) Execution with the standard initialization, which by Corollary \ref{corol:NonConnected} computes $\mD_+ := \min\{ \vp(e); \, e \in V \cap \mT^+\}$. (II) Execution with the modified initialization $f \leftarrow (0,1)$, $G \leftarrow [(-1,0)]$, which similarly computes $\mD_- := \min\{ \vp(e); \, e \in V \cap \mT^-\}$, with $\mT^- := \{(a,b) \in \mT; \, a<0,\, b>0\}$. 
Eventually $\mD = \min \{ \mD_+ , \mD_-\}$, which is the minimum of $\vp$ on $V$ since $\vp$ is even and $V$ is symmetric w.r.t the origin by Definition \ref{def:Stencil}.
\end{proof}

\subsection{Increasing functions on trees, and the case of quadratic functions}
\label{sec:PropagationOnTrees}

The hierarchical MA-LBR operator $\cDu_\cV u(x)$ can be regarded, essentially, see Corollary \ref{corol:MinTree}, as an minimization over a subtree of the Stern-Brocot tree. In this section, we identify assumptions under which this pruning procedure is valid, i.e.\ it only drops useless branches where the minimum would not be found. 

\begin{definition}
\label{def:IncreasingOutOfT}
Let $B$ be a graph, let $A$ be a subset of its vertices, and let $\vp : B \to \R$. We say that $\vp$ is increasing outside of $A$ iff for each edge  $a \to b$ of the graph $B$ with $b \in B \sm A$, one has $\vp(a) \leq \vp(b)$.
\end{definition}

\begin{proposition}
\label{prop:IncreasingMin}
Let $B$ be a finite collection of finite trees, and let $A$ be a subset of $B$ containing the root of each tree. If $\vp : B \to \mR$ is increasing outside of $A$, then $\min_A \vp = \min_B \vp$.
\end{proposition}

\begin{proof}
Let $b$ be a minimizer of $\vp$ on $B$, with minimal (graph) distance from the root of its tree. Assume for contradiction that $b \notin A$. Then $b$ is not the root, hence there exists an edge $a \to b$ in the graph $B$. Then $\vp(a) \leq \vp(b)$ and $a$ is closer to the root, which is a contradiction.
\end{proof}

Given some fixed $u \in \mU$, $x \in X$, we introduce the function $\vp : \mT \to \mR_+$ defined by 
\begin{equation}
\label{eqdef:phi}
\vp(e) := 
\begin{cases}
\huefg & \text{if } e=f\oplus g,\\
+\infty & \text{otherwise, i.e.\ if } e \in \{(\pm 1, 0), (0, \pm 1)\}.
\end{cases}
\end{equation}
We show in the next proposition, under some assumptions, that $\vp$ is increasing in the sense of Definition \ref{def:IncreasingOutOfT} on some subsets of the graph $\mT$.
From this we deduce the equality of the (non-adaptive) MA-LBR operator $\cD_\cV$ \eqref{eqdef:DVux} associated to some small and large stencils.
\begin{definition}
Let $u \in \FX$, let $x \in X$, and let $e = f \oplus g$. We define 
\begin{equation}
\label{eqdef:Heux}
H_e u(x) := \Delta_e u(x) - \Delta_f u(x) - \Delta_g u(x).
\end{equation}
\end{definition}

\begin{proposition}
\label{prop:PhiInc}
Let $u \in \FX$, let $x \in X$, and let $\cU, \cV$ be families of stencils. Assume that $\cV(x) \subset \cU(x) \subset \cVu(x)$, and that: 
\begin{enumerate}[(A)]
\item 
$\Delta_e u(x)>0$ for each $e \in \cVu(x)$.
\item 
$H_e u(x) \geq 0$ for each $e \in \cVu(x) \sm \cU(x)$.
\end{enumerate}
Then $\cD_\cU u(x) = \cD_\cVu u(x)$.
\end{proposition}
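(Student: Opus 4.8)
The plan is to show that $\varphi$ defined in \eqref{eqdef:phi} is increasing (in the sense of Definition \ref{def:IncreasingOutOfT}) outside of $\cU(x)$ on the relevant subtrees of $\mT$, and then to invoke Proposition \ref{prop:IncreasingMin} together with the tree-search interpretation of $\cD_\cV$ furnished by Corollary \ref{corol:PositiveDifferences2}. First I would recall that by Corollary \ref{corol:PositiveDifferences2}, each operator value is the minimum of $\varphi$ over the decomposable elements of the corresponding stencil, namely $\cD_\cU u(x) = \min\{\varphi(e); e \in \cU(x),\, e = f\oplus g\}$ and $\cD_\cVu u(x) = \min\{\varphi(e); e \in \cVu(x),\, e=f\oplus g\}$. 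Since $V_8 \subset \cV(x) \subset \cU(x) \subset \cVu(x)$ and the four components of $\cVu(x)$ rooted at $\pm(1,1), \pm(1,-1)$ are subtrees of $\mT$, the idea is to apply Proposition \ref{prop:IncreasingMin} with $B = \cVu(x)$ (minus the four isolated points, where $\varphi = +\infty$) and $A = \cU(x)$, once I verify that $A$ contains the root of each component and that $\varphi$ is increasing outside $A$.

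The main step is the monotonicity estimate. Consider an edge $e' \to e$ of $\mT$ with $e \in \cVu(x) \sm \cU(x)$; I must show $\varphi(e') \leq \varphi(e)$. By Lemma \ref{lem:DecompEdge} the unique parent $e'$ is one of the two summands, say $e = e' \oplus g$ (the other case being symmetric), so that $e = e' + g$ and $\varphi(e) = h(\Delta_{e'} u(x), \Delta_g u(x), \Delta_e u(x))$ while $\varphi(e')$ is built from the superbase decomposition of $e'$. The key arithmetic input is that $H_e u(x) \geq 0$ holds by assumption (B), since $e \in \cVu(x)\sm\cU(x)$; this says $\Delta_e u(x) \geq \Delta_{e'} u(x) + \Delta_g u(x)$, i.e.\ the triple of second differences for $e$ lies \emph{outside} $K$ on the side $a \geq b+c$. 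In that regime the definition \eqref{eqdef:h} of $h$ collapses to the product $h(\Delta_{e'} u(x), \Delta_g u(x), \Delta_e u(x)) = \Delta_{e'} u(x)\,\Delta_g u(x)$. I would then compare this with $\varphi(e')$: writing the decomposition of the parent $e'$ and using the monotonicity of $h$ (Lemma \ref{lem:hDiff}) together with positivity of all second differences (assumption (A), which guarantees the arguments lie in $\R_+^*$), the value $\varphi(e')$ is bounded above by a product of two of the relevant positive second differences, and the inequality $\varphi(e') \leq \Delta_{e'} u(x)\,\Delta_g u(x) = \varphi(e)$ should follow from $\Delta$ being additive along the splitting $e' = (\text{its two parents})$ and the collapse of $h$ to a product.

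I expect the main obstacle to be precisely this last comparison: matching the product form of $\varphi(e)$ against the possibly non-product form of $\varphi(e')$, and ensuring that the inequality $H_e u(x) \geq 0$ at the child propagates correctly to control the value at the parent. The subtlety is that $\varphi(e')$ may itself be evaluated either in the product branch or the $h_1$ branch of \eqref{eqdef:h}, so I anticipate splitting into cases according to where the triple associated with $e'$ sits relative to $K$, and in each case using Lemma \ref{lem:hDiff}'s bound $h(a,b,c) \leq \min\{ab, bc, ca\}$ to dominate $\varphi(e')$ by the appropriate product. Once monotonicity outside $\cU(x)$ is established, the conclusion is immediate: Proposition \ref{prop:IncreasingMin} applied componentwise yields $\min_{\cU(x)}\varphi = \min_{\cVu(x)}\varphi$, and translating back through Corollary \ref{corol:PositiveDifferences2} gives $\cD_\cU u(x) = \cD_\cVu u(x)$, as claimed. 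The hypothesis $\cV(x) \subset \cU(x)$ (rather than just $V_8 \subset \cU(x)$) is presumably used to guarantee, via (Hierarchy), that $\cU(x)$ is itself a union of subtrees containing the roots, so that $A = \cU(x)$ is an admissible choice in Proposition \ref{prop:IncreasingMin}.
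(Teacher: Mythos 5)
Your overall architecture is exactly the paper's: express both operator values as minima of $\vp$ over decomposable stencil elements (Corollary \ref{corol:PositiveDifferences2}), show that $\vp$ restricted to $\cVu(x)$ is increasing outside of $\cU(x)$, and conclude with Proposition \ref{prop:IncreasingMin}; the collapse of $h$ to a product at the child, forced by (B) together with the positivity from (A), is also the paper's key mechanism, and your handling of the roots via $V_8 \subset \cV(x) \subset \cU(x)$ is fine.

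There is, however, one genuine soft spot: the central inequality $\vp(e') \leq \vp(e)$ is left resting on a justification that is wrong as stated. You write that it ``should follow from $\Delta$ being additive along the splitting $e' = (\text{its two parents})$'', but $\Delta$ is \emph{not} additive along that splitting --- the defect of additivity is precisely $H_{e'} u(x)$, see \eqref{eqdef:Heux}, which need not vanish and is not controlled by (A) or (B) when $e' \in \cU(x)$. The fact that actually closes the argument, and which your write-up never states, is combinatorial rather than analytic: writing the parent's decomposition $e' = f' \oplus g'$, the existence of the edge $e' \to e$ in $\mT$ forces, by Definition \ref{def:GraphT} and uniqueness of decompositions (Lemma \ref{lem:DecompEdge}), that $e$ is either $f' \oplus e'$ or $e' \oplus g'$; hence the ``other'' summand $g$ in your writing $e = e' \oplus g$ satisfies $g \in \{f', g'\}$. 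Only with this identification is $\Delta_{e'}u(x)\,\Delta_g u(x)$ one of the three pairwise products of the parent's triple $\bigl(\Delta_{e'}u(x), \Delta_{f'}u(x), \Delta_{g'}u(x)\bigr)$, so that Lemma \ref{lem:hDiff}'s bound $h(a,b,c) \leq \min\{ab,bc,ca\}$ gives $\vp(e') \leq \Delta_{e'}u(x)\,\Delta_g u(x) = \vp(e)$ in one line --- no case splitting relative to $K$ is needed at all. (The paper sidesteps this entirely by parametrizing from the parent's decomposition $e = f\oplus g$ and writing the child as $f \oplus e$ or $e \oplus g$, which is the cleaner order of quantification.) A further minor omission: to apply (A) to all the second differences involved you need $f', g' \in \cVu(x)$, which is supplied by Lemma \ref{lem:ReversedHierarchy}.
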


\begin{proof}
Fix $u \in \FX$, $x \in X$, and consider $\vp$ defined by \eqref{eqdef:phi}. Denote $V := \cV(x)$, $U := \cU(x)$, $\overline V := \cVu(x)$. We regard $\overline V$ as a subgraph of $\mT$, by keeping all edges with endpoints in this set. 

We claim that the restriction of $\vp$ to $\overline V$ is increasing outside of $U$, in the sense of Definition \ref{def:IncreasingOutOfT}.
Indeed consider an edge $e \to e'$ of $\mT$, where $e' \in \overline V \sm U$. Introducing the decomposition $e = f \oplus g$, we note that $e' \in \{f \oplus e, e \oplus g\}$, and also that $e,f,g \in \cVu(x)$ by Lemma \ref{lem:ReversedHierarchy}. For each $\omega \in \{e,f,g,e'\}$ let $\delta_\omega := \Delta_\omega u(x)$, which is positive by (A).
Assuming without loss of generality that $e' = f \oplus e$ we obtain 
$\delta_{e'} \geq \delta_e + \delta_f$, by (B).
Hence, as announced, using Lemma \ref{lem:hDiff}
\begin{equation}
\vp(e) = h(\delta_e, \delta_f, \delta_g) \leq \delta_e \delta_f  = h(\delta_{e'}, \delta_e, \delta_f)  = \vp(e').
\end{equation}

Thus $\vp : \overline V \to \mR$ is increasing outside of $U$, and therefore $\cD_\cU u (x) = \min\{\vp(e);\, e \in U\} = \min \{ \vp(e);\, e \in \overline V\} = \cD_\cVu u(x)$ by Proposition \ref{prop:IncreasingMin} and Corollary \ref{corol:PositiveDifferences2}. This concludes the proof. 
\end{proof}

We focus in the rest of this section on the case of a quadratic function $u_M$, where $M \in S_2^+$ is fixed. We link the adaptive MA-LBR operator $\cDu_\cV u_M(x)$ with Selling's algorithm page \pageref{algo:Selling}. Since $\Delta_e u_M(x) = \<e,Me\> > 0$ for any $x \in X$, $e \neq 0$, assumption (A) of Proposition \ref{prop:PhiInc} is automatically satisfied. Regarding (B) we observe the simplification: if $e=f \oplus g$ 
\begin{equation}
\label{eq:HScal}
H_e u_M(x) = \<f,M g\>.
\end{equation}
We thus introduce
\begin{equation}
\mT_M := \{e \in \mT; \, e= f \oplus g, \, \<f, M g\> < 0\}.
\end{equation}
We shall use the identity: for any $f,g \in \mR^2$ 
\begin{equation}
\label{eq:ComplexModuleMultiplication}
\det(f,g)^2+\<f,g\>^2 = \|f\|^2\|g\|^2.
\end{equation}

\begin{lemma}
\label{lem:FiniteTree}
If $M$ is diagonal, then $\mT_M = \emptyset$. Otherwise $\mT_M = \{e_0, \cdots, e_n, -e_0, \cdots, -e_n\}$, for  some finite branch $e_0 \to e_1 \to \cdots \to e_n$ of $\mT$, with $e_0 \in \{(1,1), (-1,1)\}$.
\end{lemma}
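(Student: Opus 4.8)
The plan is to extract everything from a single algebraic identity describing how the test quantity $\<f,Mg\>$ propagates from a vertex $e=f\oplus g$ to its two children. By Proposition \ref{prop:SumChildren} these children are $f\oplus e$ and $e\oplus g$, whose own defining products are $\<f,Me\>=\|f\|_M^2+\<f,Mg\>$ and $\<e,Mg\>=\<f,Mg\>+\|g\|_M^2$, and these sum to $\|f+g\|_M^2=\|e\|_M^2>0$ since $M\in S_2^+$. Before using it, I would record the symmetry $e\in\mT_M\iff -e\in\mT_M$: indeed $-e=(-f)\oplus(-g)$ and $\<-f,M(-g)\>=\<f,Mg\>$, and the same computation shows that negation is a graph automorphism of $\mT$ exchanging the first/third and second/fourth quadrant trees of Proposition \ref{prop:V8T}. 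This reduces the entire problem to locating $\mT_M$ inside the two quadrant trees rooted at $(1,1)$ and $(-1,1)$.

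Two consequences of the identity then supply the structure. First, the two child products cannot both be negative (they sum to something positive), so a vertex of $\mT_M$ has \emph{at most one} child in $\mT_M$: the set cannot branch. Second, if $\<f,Mg\>\geq 0$ then both child products are strictly positive (being $\geq\|f\|_M^2$ and $\geq\|g\|_M^2$ respectively); hence a vertex outside $\mT_M$ has all its descendants outside $\mT_M$. Contrapositively, $\mT_M$ is closed under passing to parents, so inside each quadrant tree $\mT_M$ is either empty or a connected path issuing from the root. I would then evaluate the roots directly: $(1,1)=(1,0)\oplus(0,1)$ yields product $M_{12}$, and $(-1,1)=(0,1)\oplus(-1,0)$ yields product $-M_{12}$. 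If $M$ is diagonal both vanish, neither root lies in $\mT_M$, and upward-closedness forces $\mT_M=\emptyset$. If $M_{12}\neq 0$, exactly one of these two roots (and, by the symmetry above, its negative) lies in $\mT_M$, which pins down $e_0\in\{(1,1),(-1,1)\}$.

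The one genuinely non-trivial point is finiteness of the path, and here I would reuse Selling's energy exactly as in Proposition \ref{prop:convalg2}. To $e=f\oplus g\in\mT_M$ associate the superbase $(e,-f,-g)$ and the energy $\|e\|_M^2+\|f\|_M^2+\|g\|_M^2$. The defining inequality $\<f,Mg\><0$ means the pair $(-f,-g)$ is never selected, whereas passing to the unique child in $\mT_M$ — say $f\oplus e$, which requires $\<f,Me\><0$, i.e. $\<e,M(-f)\> > 0$ — is precisely the Selling move producing $(e+f,-f,-e)$ and lowering the energy by $4\<e,M(-f)\> > 0$ (the other subcase is identical). Thus the energy strictly decreases along the branch; since only finitely many superbases have energy below a given bound, the branch $e_0\to\cdots\to e_n$ is finite, and its endpoint, where neither child lies in $\mT_M$, is exactly an $M$-obtuse superbase. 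Assembling this branch, its negatives, and the empty contribution of the remaining two quadrants gives $\mT_M=\{e_0,\dots,e_n,-e_0,\dots,-e_n\}$.

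The main obstacle is therefore the termination argument: the no-branching and upward-closedness properties are immediate corollaries of the quadratic identity, but excluding an infinite descending path needs a strict monovariant, for which Selling's energy from Proposition \ref{prop:convalg2} is the natural and already-available tool. A minor care point is the bookkeeping across the four quadrant trees, and checking that the $\oplus$-decompositions of the two relevant roots indeed produce the products $\pm M_{12}$.
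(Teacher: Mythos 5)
Your proof is correct, and its structural half coincides with the paper's: the symmetry of $\mT_M$ under negation, closure under passing to parents, and the no-branching property all come from the same identities $\<f,Me\>=\|f\|_M^2+\<f,Mg\>$ and $\<f,Me\>+\<e,Mg\>=\|e\|_M^2\geq 0$ that the paper uses (you are in fact more explicit than the paper about evaluating the roots, $(1,1)\mapsto M_{12}$ and $(-1,1)\mapsto -M_{12}$, and about the diagonal case, which the paper leaves implicit). Where you genuinely diverge is the finiteness step, which is the crux. The paper proves a quantitative exclusion: if $\|e\|\geq 1+\kappa(M)$, where $\kappa(M):=\sqrt{\|M\|\,\|M^{-1}\|}$ and $\lambda$ is the smallest eigenvalue of $M$, then $e\notin\mT_M$. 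It first gets $\<f,Mg\>\neq 0$ from \eqref{eq:ComplexModuleMultiplication} applied to $M^{1/2}f$, $M^{1/2}g$, namely $\<f,Mg\>^2=\|f\|_M^2\|g\|_M^2-\det(M)>\lambda^2\bigl[(\|e\|-1)^2-\kappa(M)^2\bigr]\geq 0$, and then pins down the sign by a homotopy argument along $M_t:=(1-t)\Id+tM$, whose condition number stays $\leq\kappa(M)$ and for which $\<f,M_0g\>=\<f,g\>\geq 0$, concluding $\<f,Mg\>>0$. You instead use Selling's energy $\cE(e,-f,-g)$ as a strict monovariant along the branch: passing to the child of $e$ lying in $\mT_M$ is exactly a Selling move, decreasing the energy by $4|\<f,Me\>|>0$, and only finitely many superbases have energy below a given bound. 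Both arguments are sound. The paper's route buys an explicit ball $\{\|e\|<1+\kappa(M)\}$ containing $\mT_M$, hence a bound on the branch length; your route reuses Proposition \ref{prop:convalg2} verbatim, avoids the continuity/homotopy argument, and simultaneously delivers the content of Corollary \ref{corol:AncestorHScal} — the branch is the trajectory of Selling's algorithm, and your terminal superbase $(e_n,-f_n,-g_n)$ is precisely the $M$-obtuse one — which the paper establishes separately immediately afterwards.
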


\begin{proof}
Claim (symmetry): one has $e \in \mT_M$ iff $-e \in \mT_M$. Indeed if $e=f\oplus g$ then $-e = (-f) \oplus (-g)$, and $\< f, M g\> = \<(-f), M (-g)\>$. Claim (tree structure): for any edge $e \to e'$ of $\mT$, one has $e' \in \mT_M \Rightarrow e \in \mT_M$. Indeed write $e = f \oplus g$, so that $e' = f \oplus e$ (resp.\ or $e' = e \oplus g$). Then $\<f, M e\> = \<f, M g\> + \<f,M f\> \geq \<f, M g\>$ (resp.\ likewise $\<e, M g\> \geq \<f, M g\>$) as announced. Claim (single branch): if $e = f \oplus g$, then at most one of $f \oplus e$ and $e \oplus g$ belongs to $\mT_M$. Indeed $\<f, M e\>+\<e, M g\> = \<e, M e\> \geq 0$, hence at most one of these scalar products is negative.

In order to conclude the proof, it suffices to establish the finiteness of $\mT_M$. 
Let $e = f \oplus g$, let $\lambda$ denote the smallest eigenvalue of $M$, and let $\kappa(M) := \sqrt{\|M\| \|M^{-1}\|}$. Finiteness follows from the claim: if $\|e\| \geq 1+ \kappa(M)$, then $e \notin \mT_M$. Indeed
\begin{equation*}
\<f,M g\>^2 = \|f\|_M^2 \|g\|_M^2 - \det(M) \det(f,g)^2 \geq \lambda^2 \|f\|^2 \|g\|^2 - \det(M) > \lambda^2 \left[(\|e\|-1)^2 - \kappa(M)^2\right]
\end{equation*}
We applied \eqref{eq:ComplexModuleMultiplication} to $M^\frac 1 2 f$ and $M^\frac 1 2 g$ for the first identity, and used that $\det(f,g)=1$ for the following inequality. Last inequality used $\|f\|+\|g\| > \|e\|$, $\min \{\|f\|, \|g\|\} \geq 1$, hence $\|f\| \|g\| > \|e\|-1$, and $\det(M)/\lambda^2 = \kappa(M)^2$. 
Since $\|e\| \geq 1+\kappa(M)$ we have shown $\<f, M g \> \neq 0$.
Apply this observation to the family of matrices $M_t := (1-t) \Id +t M$, $t\in [0,1]$, which satisfy $\kappa(M_t) \leq \kappa(M)$. Proposition \ref{prop:Decomp} states that $\<f,M_0 g\> = \<f,g\> \geq 0$, hence $\<f, M_t g\> > 0$ for all $t \in [0,1]$, thus $\<f, M g \> > 0$ and therefore $e \notin \mT_M$ as announced. 
\end{proof}

In the following corollary, a superbase $(e_0,e_1,e_2)$ of $\mZ^2$ is said to be equivalent to the superbases $(\ve e_i, \ve e_j, \ve e_k)$, for any permutation $\{i,j,k\}$ of $\{0,1,2\}$, and any sign $\ve \in \{-1,1\}$.
\begin{corollary}
\label{corol:AncestorHScal}
Let $M \in S_2^+$. If $M$ is diagonal, then Selling's algorithm stops at the first iteration. Otherwise let $e_0 \to \cdots \to e_n$ be as in Lemma \ref{lem:FiniteTree}, write $e_i = f_i \oplus g_i$. Then Selling's algorithm, initialized with the superbase $(e_0, -f_0, -g_0)$, generates in its successive iterations superbases equivalent to $(e_i,-f_i,-g_i)$. It terminates at the $n$-th iteration.
\end{corollary}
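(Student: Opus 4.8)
The plan is to dispatch the diagonal case by inspection and then, for non-diagonal $M$, to run an induction that reads each iteration of Selling's Algorithm \ref{algo:Selling} as one step down the finite branch $e_0 \to \cdots \to e_n$ of $\mT_M$ produced by Lemma \ref{lem:FiniteTree}. For the diagonal case I would simply note that Selling's default initial superbase $(-1,-1),(1,0),(0,1)$ is already $M$-obtuse: the product $\<(1,0),M(0,1)\> = M_{12}$ vanishes, while $\<(-1,-1),M(1,0)\> = -M_{11}$ and $\<(-1,-1),M(0,1)\> = -M_{22}$ are $\le 0$. Hence the continuation test fails immediately and the loop performs no iteration.

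For non-diagonal $M$, the core is a dictionary between the pairwise $M$-products of the superbase $(e_i,-f_i,-g_i)$ and the branch structure. Writing $e_i = f_i \oplus g_i$ and using symmetry of $M$, the three products are $\<e_i,M(-f_i)\> = -\<f_i,M e_i\>$, $\<e_i,M(-g_i)\> = -\<e_i,M g_i\>$, and $\<-f_i,M(-g_i)\> = \<f_i,M g_i\>$; the last is strictly negative since $e_i \in \mT_M$. By the single-branch property established inside Lemma \ref{lem:FiniteTree}, for $i<n$ the edge $e_i \to e_{i+1}$ points to exactly one child, and it is precisely the one whose defining scalar product is negative: either $e_{i+1} = f_i \oplus e_i$ with $\<f_i,M e_i\> < 0$ (so $f_{i+1}=f_i$, $g_{i+1}=e_i$), or $e_{i+1} = e_i \oplus g_i$ with $\<e_i,M g_i\> < 0$ (so $f_{i+1}=e_i$, $g_{i+1}=g_i$). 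In the first case the positive pair is $(e_i,-f_i)$, and Selling's flip replaces it by $(e_i+f_i,\,-f_i,\,-e_i)$; Proposition \ref{prop:SumChildren} identifies $e_i+f_i = f_i \oplus e_i = e_{i+1}$, so the new superbase is exactly $(e_{i+1},-f_{i+1},-g_{i+1})$. The second case is symmetric and yields a permutation of $(e_{i+1},-f_{i+1},-g_{i+1})$. This is the inductive step showing that after $i$ iterations the algorithm holds a superbase equivalent to $(e_i,-f_i,-g_i)$.

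Termination follows from the same dictionary at $i=n$: since $e_n$ is the leaf of the branch, neither child lies in $\mT_M$, so $\<f_n,M e_n\> \ge 0$ and $\<e_n,M g_n\> \ge 0$; together with $\<f_n,M g_n\> < 0$ this makes all three pairwise products of $(e_n,-f_n,-g_n)$ nonpositive, i.e. the superbase is $M$-obtuse and the loop exits, precisely at the $n$-th iteration.

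The main obstacle I anticipate is the bookkeeping of equivalence: Selling's flip formula $(e_i-e_j,e_j,-e_i)$ is written for an ordered, signed representative, whereas the induction only controls the superbase up to permutation and global sign. I would resolve this by observing that the unordered set of pairwise products $\{\<e_i,M e_j\>\}$ — hence both the $M$-obtuseness test and the identity of the unique positive pair — is invariant under permutation and under the sign change $e_i \mapsto \varepsilon e_i$. Consequently the branch-traversal argument does not depend on which representative the algorithm actually stores, and the per-step algebra above is routine once Proposition \ref{prop:SumChildren} is invoked.
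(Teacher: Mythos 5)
Your proof is correct and takes essentially the same route as the paper's: the same induction along the branch of $\mT_M$, using the single-branch property to identify the unique positive pairwise product so that Selling's flip yields a superbase equivalent to $(e_{i+1},-f_{i+1},-g_{i+1})$, and the same termination argument that neither child of $e_n$ lies in $\mT_M$, forcing $(e_n,-f_n,-g_n)$ to be $M$-obtuse. Your explicit handling of the diagonal case and of the permutation/sign bookkeeping merely spells out details the paper leaves implicit.
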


\begin{proof}
Claim: the superbase $(e_n,-f_n,-g_n)$ is $M$-obtuse. Indeed, $\<(-f_n), M (-g_n)\> = \<f_n, M g_n\> < 0$ since $e_n \in \mT_n$. On the other hand $f_n \oplus e_n \notin \mT_M$ and $e_n \oplus g_n \notin \mT_M$, by Lemma \ref{lem:FiniteTree} and the structure of $\mT$, see Definition \ref{def:GraphT}. Hence $\<(-f_n), M e_n\> \leq 0$ and $\<-g_n, M e_n\> \leq 0$.

Proof by induction on the iteration count $i$, $0 \leq i \leq n$. Case $i=0$ holds by the choice of initialization. Induction: consider the superbase $(e_i,-f_i, -g_i)$ of the $i$-th iteration, for some $0 \leq i<n$. Assume that $e_{i+1} = f_i \oplus e_i$ (the case $e_{i+1} = e_i \oplus g_i$ is similar), which means that $\<f_i, M e_i\> < 0$. One has $\<(-f_i), M (-g_i)\> < 0$ since $e_i \in \mT_M$, $\<(-f_i), M e_i\> > 0$, and $\<(-g_i), M e_i\> = - \<e_i, M e_i\> + \<f_i, M e_i\> < 0$. Hence Selling's algorithm constructs for the next iteration the superbase $(e_i - (-f_i), -f_i, -e_i) = (e_i\oplus f_i, -f_i, -e_i) = (e_{i+1},-f_{i+1},-g_{i+1})$ as announced.
\end{proof}

The next proposition establishes our main result Theorem \ref{th:Hierarchy} in the special case of quadratic functions. It also shows that the pruning procedure defining the adaptive operator $\cDu_\cV$ is extremely well behaved, since it only explores (in addition to the basic stencil $\cV(x)$) a single branch of the Stern-Brocot tree, within $\mT_M$ like Selling's algorithm. 

\begin{proposition}
Let $M \in S_2^+$, and let $u := u_M$. Let $x \in X$, let $\cV$ be a family of stencils, and 
\begin{align}
\label{eq:AncestorsRefinement}
\cU(x) &:= \cV(x) \cup \{e \in \cV_\Omega(x); \, H_e u(x) < 0\} \\
\nonumber
&= \cV(x) \cup (\mT_M \cap \cV_\Omega(x)).
\end{align}
Then $\cDu_\cV u(x) = \cD_\cU u(x) = \cD_\cVu u(x)$. 
In addition, when computing $\cDu_\cV u (x)$ through Algorithm \ref{algo:D2p}, the evaluation of \eqref{eqdef:phi} is performed only when $e \in \cU(x)$.
\end{proposition}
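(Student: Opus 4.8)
The plan is to combine the quadratic simplification of assumption (B) with the branch structure of $\mathbb{T}_M$ established in Lemma \ref{lem:FiniteTree}, and then invoke Proposition \ref{prop:PhiInc} to get the equality $\cD_\cU u(x) = \cD_\cVu u(x)$. The second equality $\cDu_\cV u(x) = \cD_\cU u(x)$ will follow from interpreting Algorithm \ref{algo:D2p} as the depth-first tree minimization of \S\ref{sec:Algorithm}, with the (Refinement test) precisely selecting the stencil $\cU(x)$.

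First I would verify the asserted identity $\cU(x) = \cV(x) \cup (\mathbb{T}_M \cap \cV_\Omega(x))$. This is immediate from the definition \eqref{eqdef:Heux} of $H_e u(x)$ and the simplification \eqref{eq:HScal}, namely $H_e u_M(x) = \<f, M g\>$ when $e = f \oplus g$: the condition $H_e u(x) < 0$ is exactly $\<f, M g\> < 0$, which is the membership condition for $\mathbb{T}_M$. Then I would check that $\cU$ is a legitimate family of stencils satisfying the hypotheses of Proposition \ref{prop:PhiInc}. By construction $\cV(x) \subset \cU(x) \subset \cVu(x) = \cV(x) \cup \cV_\Omega(x)$. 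Assumption (A) holds automatically since $\Delta_e u_M(x) = \<e, Me\> > 0$ for every $e \neq 0$, as $M \in S_2^+$. For assumption (B), I must show $H_e u(x) \geq 0$ for every $e \in \cVu(x) \sm \cU(x)$: but if such an $e = f \oplus g$ had $H_e u(x) = \<f, M g\> < 0$, then $e \in \mathbb{T}_M \cap \cV_\Omega(x) \subset \cU(x)$, a contradiction (the points $(\pm 1,0),(0,\pm 1)$ are already in $V_8 \subset \cV(x) \subset \cU(x)$, so they cause no trouble). Hence (B) holds and Proposition \ref{prop:PhiInc} yields $\cD_\cU u(x) = \cD_\cVu u(x)$.

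The remaining, and more delicate, step is to identify the output of Algorithm \ref{algo:D2p} with $\cD_\cU u(x)$, together with the claim that \eqref{eqdef:phi} is evaluated only at $e \in \cU(x)$. Here I would compare the (Refinement test) of Algorithm \ref{algo:D2p} against the branch-selection mechanism of Algorithm \ref{algo:MinTree} as analyzed in Corollary \ref{corol:MinTree}. The test prepends $e$ to the list precisely when $e \in \cV(x)$ \emph{or} [$e \in \cV_\Omega(x)$ and $\Delta_e u(x) < \Delta_f u(x) + \Delta_g u(x)$]; by \eqref{eqdef:Heux} the bracketed condition is $e \in \cV_\Omega(x)$ with $H_e u(x) < 0$, i.e. $e \in \mathbb{T}_M \cap \cV_\Omega(x)$. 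Thus the set of accepted vertices is exactly $\cU(x)$, and Algorithm \ref{algo:D2p} is Algorithm \ref{algo:MinTree} run on the stencil $\cU(x)$ with the even map $\vp$ of \eqref{eqdef:phi} (the initialization $G \leftarrow [(0,1),(-1,0)]$ matches Corollary \ref{corol:MinTree}). Since $\cU(x)$ contains $V_8$ and, by Lemma \ref{lem:FiniteTree}, its portion beyond $\cV(x)$ lies along the single finite branch $\mathbb{T}_M$, the connected components of $\pm(1,1), \pm(1,-1)$ in $\cU(x)$ exhaust the relevant vertices, so Corollary \ref{corol:MinTree} gives $\mathtt{D} = \min\{\vp(e); e \in \cU(x)\} = \cD_\cU u(x)$, using Corollary \ref{corol:PositiveDifferences2} to rewrite $\cD_\cU u(x)$ as this minimum over decompositions.

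**The main obstacle** I anticipate is the bookkeeping needed to confirm that the depth-first exploration of Algorithm \ref{algo:D2p} visits a genuine \emph{subtree} of $\mathbb{T}$ — that is, that $\cU(x)$ restricted to each quadrant is connected and closed under taking parents, so that Corollary \ref{corol:MinTree} applies verbatim. This requires that whenever $e \in \cU(x) \sm \cV(x) = \mathbb{T}_M \cap \cV_\Omega(x)$, its parent edge in $\mathbb{T}$ also lies in $\cU(x)$; the tree-structure claim in Lemma \ref{lem:FiniteTree} (membership in $\mathbb{T}_M$ is inherited by parents) handles the $\mathbb{T}_M$ part, while (Hierarchy) for $\cV$ and Lemma \ref{lem:ReversedHierarchy} handle the $\cV(x)$ part. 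Establishing the final clause — that $\vp$ is evaluated \emph{only} on $\cU(x)$ — amounts to observing that the $\mathtt{D} \leftarrow \min\{\mathtt{D}, \huefg\}$ update in Algorithm \ref{algo:D2p} fires exactly on the \textbf{then} branch, i.e. exactly for the accepted vertices $e \in \cU(x)$, which is immediate once the identification of the acceptance rule with $\cU(x)$ is in place.
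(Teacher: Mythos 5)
Your proposal is correct and follows essentially the same route as the paper: use \eqref{eq:HScal} to identify the (Refinement test) with membership in $\cU(x)$, check that $\cU$ is a family of stencils whose extra part is the branch $\mT_M$ (the paper phrases this as stability of rooted subtrees under unions and intersections, you phrase it as parent-closedness via Lemma \ref{lem:FiniteTree} and Lemma \ref{lem:ReversedHierarchy} — the same fact), then conclude $\cDu_\cV u(x)=\cD_\cU u(x)$ by Corollary \ref{corol:MinTree} and $\cD_\cU u(x)=\cD_\cVu u(x)$ by Proposition \ref{prop:PhiInc}, whose hypotheses (A) and (B) hold automatically for quadratic $u_M$ and by construction of $\cU(x)$, respectively.
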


\begin{proof}
Let $U := \cU(x)$, $V := \cV(x)$, $\overline V := \cVu(x)$, so that $U = V \cup (\mT_M \cap \overline V)$ by \eqref{eq:HScal}. Note that the collection of subtrees of a same tree, with the same root, is stable by unions and intersections.
Hence $U$ satisfies property (Hierarchy) of stencils, by Lemma \ref{lem:FiniteTree} and Proposition \ref{prop:VExtended},

We recognize in \eqref{eq:AncestorsRefinement} the (Refinement test)
appearing in Algorithm \ref{algo:D2p}. Corollary \ref{corol:MinTree} applied to $\cU(x)$ and \eqref{eqdef:phi} states that $\cDu_\cV u(x) = \cD_\cU u(x)$. On the other hand $\cD_\cU u(x) = \cD_\cVu(x)$ by Proposition \ref{prop:PhiInc}, which concludes the proof. 
\end{proof}

\subsection{Equality of the adaptive and the extensive MA-LBR operator}
\label{sec:HierarchyConclusion}

We prove Theorem \ref{th:Hierarchy}, stating under mild assumptions the equality of the adaptive MA-LBR operator $\cDu_\cV$, and the brute-force one $\cD_\cVu$, which extensively sweeps through the extended stencils. 
For that purpose, and similarly to the quadratic case, we use through Proposition \ref{prop:PhiInc} the fact that the minimized function \eqref{eqdef:phi} is increasing on some portion of the Stern-Brocot tree. 

The key of the proof is the next proposition, preceded with a technical lemma, which weakens the assumptions of Proposition \ref{prop:PhiInc}. 
Strikingly, the stencils at each $x\in X$ cannot be dealt with independently. A simultaneous, and global argument is used instead, inspired by \cite{Mirebeau:Vn5Iu9VK}. 

\begin{figure}
\centering
\begin{tabular}{ccccc}
\includegraphics[height=2.5cm]{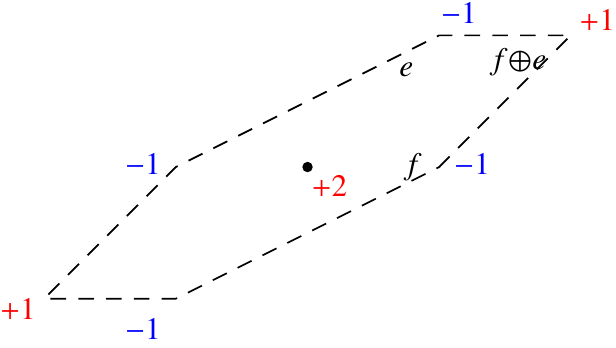} &
{\raise 1cm \hbox{\Large=}} &
\includegraphics[height=2.5cm]{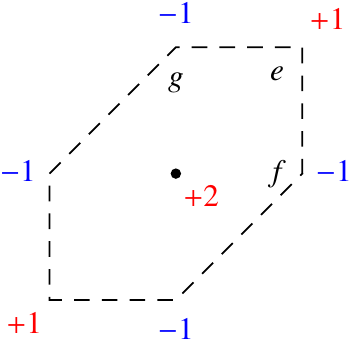} &
{\raise 1cm \hbox{\Large+}} &
\includegraphics[height=2.5cm]{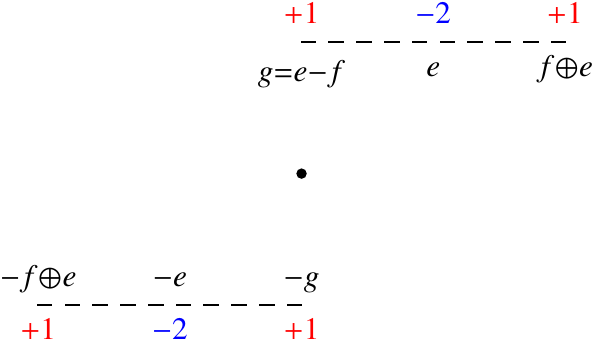} 
\end{tabular}
\caption{Illustration of Lemma \ref{lem:HHDD}}
\label{fig:HHDD}
\end{figure}

\begin{lemma}
\label{lem:HHDD}
Let $u \in \FX$, $x \in X$, and $e = f \oplus g $. If $e, e + f \in \cVin(x)$ then 
$(x+e)\pm f \in \Omega$, $(x-e) \pm f \in \Omega$, and
\begin{equation}
\label{eq:HHDD}
H_{e+f} u(x) = H_e u(x) + \Delta_f u(x+e) + \Delta_f u(x-e).
\end{equation}
Likewise if $e,e+g \in \cVin(x)$, exchanging the roles of $f$ and $g$.
\end{lemma}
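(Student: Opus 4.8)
The plan is to treat the two assertions of the lemma separately: first the four containments $(x\pm e)\pm f\in\Omega$, which follow by merely unwinding the definition \eqref{eqdef:Vin} of $\cVin$, and then the identity \eqref{eq:HHDD}, which is an elementary telescoping computation once every point involved is known to lie in $\Omega$, so that each second difference is given by the interior formula \eqref{eqdef:Delta}.

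First I would pin down the relevant decompositions. Since $e=f\oplus g$ we have $g=e-f$, and by Proposition \ref{prop:SumChildren} the canonical decomposition of $e+f$ is $e+f=f\oplus e$ (not $e\oplus g$). Consequently $e\in\cVin(x)$ yields $x\pm e,\,x\pm f,\,x\pm g\in\Omega$, while $e+f\in\cVin(x)$ yields $x\pm(e+f),\,x\pm f,\,x\pm e\in\Omega$. Reading off the four target points then suffices: $(x+e)+f=x+(e+f)$, $(x+e)-f=x+g$, $(x-e)+f=x-g$, and $(x-e)-f=x-(e+f)$, each of which appears in the lists just produced. This is the step needing the most care, precisely because it relies on selecting the correct canonical decomposition $f\oplus e$ of $e+f$ furnished by Proposition \ref{prop:SumChildren}.

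With all these points in $\Omega$, the differences $\Delta_{e+f}u(x)$, $\Delta_e u(x)$, $\Delta_g u(x)$, $\Delta_f u(x+e)$ and $\Delta_f u(x-e)$ are all interior ones \eqref{eqdef:Delta}. Expanding $H_{e+f}u(x)$ via \eqref{eqdef:Heux} with the decomposition $e+f=f\oplus e$, the claimed identity \eqref{eq:HHDD} reduces to
\[
\Delta_{e+f}u(x) - 2\Delta_e u(x) + \Delta_g u(x) = \Delta_f u(x+e) + \Delta_f u(x-e).
\]
I would then substitute $g=e-f$, so that $x\pm g=x\pm(e-f)$, and expand both sides through \eqref{eqdef:Delta}: the six occurrences of $u(x)$ cancel, and regrouping the remaining six values of $u$ around the shifted centers $x+e$ and $x-e$ reconstitutes exactly $\Delta_f u(x+e)$ and $\Delta_f u(x-e)$. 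The symmetric case $e,e+g\in\cVin(x)$ is handled identically by exchanging $f$ and $g$ and using $e+g=e\oplus g$.

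The main obstacle is purely bookkeeping rather than conceptual: matching the canonical $\oplus$-decompositions so that membership in $\cVin$ delivers precisely the required points, and tracking signs through $g=e-f$ during the expansion. No convexity of $\Omega$ beyond what is already encoded in $\cVin$ is needed, and Figure \ref{fig:HHDD} gives the geometric picture of this cancellation.
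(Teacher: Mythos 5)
Your proposal is correct and follows essentially the same route as the paper: both use Proposition \ref{prop:SumChildren} to identify $e+f=f\oplus e$, unwind the definition \eqref{eqdef:Vin} of $\cVin(x)$ for both $e$ and $e+f$ to place all relevant points in $\Omega$ (so every second difference is the interior one \eqref{eqdef:Delta}), and then verify \eqref{eq:HHDD} by direct expansion and cancellation. Your write-up merely makes explicit the cancellation that the paper delegates to Figure \ref{fig:HHDD}, and correctly flags the one delicate point, namely that membership of $e+f$ in $\cVin(x)$ must be read through its canonical decomposition $f\oplus e$.
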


\begin{proof}
Since $e \in \cVin(x)$ we have $x \pm e, x \pm f, x \pm g \in \Omega$. Since in addition $e + f \in \cVin(x)$ we have $x \pm (e+f) \in \Omega$. Note that $(x+e)-f = x+g$ and $(x-e)+f = x-g$.
Expanding the expressions on both sides of \eqref{eq:HHDD}, using that $e+f  = f \oplus e$ for the left side,
we find that they only involve the values of $u \in \FX$ at points of $X = \Omega \cap \Z^2$, and not on the boundary $\partial \Omega$. (As in \eqref{eqdef:Delta} and not \eqref{eqdef:DeltaBoundary}.) A cancellation occurs, as illustrated on Figure \ref{fig:HHDD}, and the result is proved.
\end{proof}

\begin{proposition}
\label{prop:DifferencesPropagation}
Let $\cU, \cV$ be families of stencils, and let $u \in \FX$. For each $x \in X$, assume that $\cV(x) \subset \cU(x) \subset \cVu(x)$, and that: 
\begin{enumerate}[(a)]
\item 
$\Delta_e u(x)>0$ for each $e \in \cU(x)$.
\item 
$H_e u(x) \geq 0$ for each $e \in \cVu(x) \sm \cU(x)$ for which there exists $e' \in \cU(x)$ such that $e' \to e$.
\end{enumerate}
Then $u, \cU, \cV$ satisfy the assumptions of Proposition \ref{prop:PhiInc}, for each $x \in X$.
\end{proposition}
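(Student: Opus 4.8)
The goal is to upgrade the two localized hypotheses (a)--(b) to the two global hypotheses (A)--(B) of Proposition \ref{prop:PhiInc}: hypothesis (a) controls $\Delta_e u(x)$ only for $e \in \cU(x)$, whereas (A) demands positivity on all of $\cVu(x)$; and (b) controls $H_e u(x)$ only for those $e$ admitting a parent in $\cU(x)$, whereas (B) demands $H_e u(x) \geq 0$ on all of $\cVu(x) \sm \cU(x)$. The plan is to establish both conclusions by a single strong induction on the integer $\|e\|^2$, carried out \emph{simultaneously at every point $x \in X$}. Precisely, for $n \geq 1$ the induction hypothesis asserts that for every $x \in X$ one has $\Delta_e u(x) > 0$ for all $e \in \cVu(x)$ with $\|e\|^2 \leq n$, and $H_e u(x) \geq 0$ for all $e \in \cVu(x) \sm \cU(x)$ with $\|e\|^2 \leq n$. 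At each level I would first settle the $H$-bound and then deduce the $\Delta$-bound from it.

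For the $\Delta$-bound at level $n$: if $e \in \cU(x)$ positivity is hypothesis (a); otherwise $e \in \cVu(x) \sm \cU(x) \subset \cVin(x)$, and writing $e = f \oplus g$, Lemma \ref{lem:ReversedHierarchy} applied to the family $\cVu$ (which is one, by Proposition \ref{prop:VExtended}) gives $f,g \in \cVu(x)$ with $\|f\|^2,\|g\|^2 < n$. The induction hypothesis then yields $\Delta_f u(x),\Delta_g u(x) > 0$, and combining with the freshly proved $H_e u(x) \geq 0$ and the identity $\Delta_e u(x) = H_e u(x) + \Delta_f u(x) + \Delta_g u(x)$ of \eqref{eqdef:Heux} gives $\Delta_e u(x) > 0$. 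The base cases $\|e\|^2 \leq 2$ are immediate, since then $e \in V_8 \subset \cU(x)$ and such $e$ carry no $H$-condition.

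For the $H$-bound at level $n$: fix $e \in \cVu(x) \sm \cU(x)$ with $\|e\|^2 = n$; since $V_8 \subset \cU(x)$ we have $e \notin V_8$, so by Lemma \ref{lem:DecompEdge} it has a unique parent $e'$ in $\mT$, which lies in $\cVu(x)$ by the (Hierarchy) property of this family of stencils (as in the proof of Lemma \ref{lem:ReversedHierarchy}). If $e' \in \cU(x)$, then hypothesis (b) applies directly and $H_e u(x) \geq 0$. The remaining case $e' \in \cVu(x) \sm \cU(x) \subset \cVin(x)$ is the heart of the matter: here $e \in \cVin(x)$ too, so writing $e' = f' \oplus g'$ with, say, $e = f' \oplus e'$, Lemma \ref{lem:HHDD} applies and yields
\begin{equation*}
H_e u(x) = H_{e'} u(x) + \Delta_{f'} u(x+e') + \Delta_{f'} u(x-e').
\end{equation*}
Since $\|e'\|^2 < n$ and $e' \in \cVu(x) \sm \cU(x)$, the induction hypothesis gives $H_{e'} u(x) \geq 0$; and $\|f'\|^2 < \|e'\|^2 < n$, so it only remains to control the two displaced second differences.

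The main obstacle, and the reason the induction must be run globally over all $x$ at once rather than point by point, is exactly these terms $\Delta_{f'} u(x \pm e')$, which live at the \emph{displaced} grid points $x \pm e'$ and not at $x$. Here the extra conclusions of Lemma \ref{lem:HHDD} are decisive: they guarantee $(x \pm e') \pm f' \in \Omega$, whence Lemma \ref{lem:AllSensibleDirections} gives $f' \in \cVu(x+e')$ and $f' \in \cVu(x-e')$; as $\|f'\|^2 < n$, the induction hypothesis applied \emph{at the points $x \pm e'$} yields $\Delta_{f'} u(x+e') > 0$ and $\Delta_{f'} u(x-e') > 0$, so that $H_e u(x) \geq 0$. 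The symmetric sub-case $e = e' \oplus g'$ is handled identically by exchanging $f'$ and $g'$. This closes the $H$-bound, hence the induction. Once complete, conclusions (A) and (B) hold at every $x \in X$, and together with the assumed inclusion $\cV(x) \subset \cU(x) \subset \cVu(x)$ they are exactly the hypotheses of Proposition \ref{prop:PhiInc}.
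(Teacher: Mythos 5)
Your proof is correct and follows essentially the same route as the paper's: a global induction ordered by vector norm, carried out simultaneously at all points of $X$, in which Lemma \ref{lem:HHDD} together with Lemma \ref{lem:AllSensibleDirections} resolves the displaced terms $\Delta_{f'} u(x\pm e')$ by appealing to the induction hypothesis at the shifted points. The only difference is bookkeeping: the paper runs a decreasing induction on the total cardinality $\#(\cU)$, augmenting $\cU$ by one minimal-norm pair $\pm e$ at a time (and re-verifying the stencil axioms for the augmented family), whereas you induct directly on $\|e\|^2$ and prove (A)--(B) outright, which spares that verification but is otherwise the same argument.
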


\begin{proof}
Fix the stencils $\cV$, and proceed by decreasing induction on the cardinality $\#(\cU) := \sum_{x \in X} \#(\cU(x))$. If $\#(\cU) = \#(\cVu)$, then $\cU = \cVu$ and there is nothing to prove.

Assume that $\#(\cU) < \#(\cV)$, and consider a point $x \in X$ and a vector $e \in \cVu(x) \sm \cU(x)$, such that $\|e\|$ is minimal. Let us introduce the sets $\cU'(x) := \cU(x) \cup \{\pm e\}$, and $\cU'(y) := \cU(y)$ for all $y \neq x$, and note that $\#(\cU') = \#(\cU)+2$. We prove in the following that $\cU'$ is a family of stencils satisfying the assumptions (a) and (b). Hence by induction $\cU'$ satisfies (A) and (B), which immediately implies the same properties for $\cU$ and concludes the proof.

Proof that $\cU'$ is a family of stencils. Only (Hierarchy) needs to be checked.
Since $e \in \cVu(x) \sm \cU(x)$ we have $e \in \mT \sm V_8$, hence we may introduce the decomposition $e = f \oplus g$. By Lemma \ref{lem:DecompEdge}, either $f \to e$ or $g \to e$ is an edge of the graph $\mT$. By Lemma \ref{lem:ReversedHierarchy} we have $f,g \in \cVu(x)$. By minimality of $\|e\|$ we have $f,g \in \cU(x)$. 
By (Hierarchy) for $\cU$ the set $V_8$ has an element in the connected component of $f$ and $g$ in $\cU(x)$, hence in the connected component of $e$ in $\cU'(x)$.
This establishes (Hierarchy) for $\cU'$.

Proof that $\cU'$ satisfies (a). It suffices to check this property for the additional elements $\pm e$. Using (a) for $\cU$ we obtain $\Delta_f u(x) > 0$, $\Delta_g u(x)>0$. Using (b) for $\cU$ we get  $H_e u(x) \geq 0$.
Therefore
\begin{equation}
\Delta_e u(x) = H_e u(x) + \Delta_f u(x) + \Delta_g u(x) >0.
\end{equation}

Proof that $\cU'$ satisfies (b). The two edges originating from $e$ in the graph $\mT$ are $e \to f \oplus e$ and $e \to e \oplus g$, see Definition \ref{def:GraphT}. 
Let us assume that $e+f \in \cVu(x) \sm \cU'(x)$, and establish that $H_{e+f} u(x) \geq 0$. Note that $\cVu(x)\sm \cU(x) \subset (\cV(x) \cup \cVin(x)) \sm \cV(x) \subset \cVin(x)$, hence $e,e+f \in \cVin(x)$. 
Applying Lemma \ref{lem:AllSensibleDirections} we obtain $(x+e)\pm f \in \Omega$, hence $f \in \cVu(x+e)$ by Lemma \ref{lem:AllSensibleDirections}, thus $f \in \cU(x+e)$ by minimality of $\|e\|$, and therefore $\Delta_f u(x+e) > 0$ by (a) for the stencils $\cU$. Likewise $\Delta_f u(x-e) > 0$.
Using \eqref{eq:HHDD} yields as announced $H_{e+f} u(x) > 0$. Likewise $H_{e+g} u(x) > 0$ if $e+g \in \cVu(x) \sm \cU'(x)$. This establishes (b) for $\cU'$, and concludes the proof.
\end{proof}


Our last proposition immediately implies the announced Theorem \ref{th:Hierarchy}.

\begin{proposition}
\label{prop:Hierarchy}
Let $\cV$ be a family of stencils, and let $u \in \FX$. If $\cDu_\cV u > 0$  on $X$, then $\cDu_\cV u = \cD_\cVu u$ on $X$. In all cases $\cD_\cVu u \leq \cDu_\cV u$ on $X$.
\end{proposition}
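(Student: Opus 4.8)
The plan is to realize the adaptive operator $\cDu_\cV u(x)$ as the non-adaptive operator $\cD_\cU u(x)$ of a suitable intermediate family of stencils $\cU$ squeezed between $\cV$ and $\cVu$, and then to invoke the propagation machinery of Proposition \ref{prop:DifferencesPropagation} and Proposition \ref{prop:PhiInc} to identify $\cD_\cU u$ with $\cD_\cVu u$.

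First I would fix $x \in X$ and set $\cU_0(x) := \cV(x) \cup \{e \in \cVin(x);\, H_e u(x) < 0\}$, the set of vectors $e$ passing the (Refinement test) of Algorithm \ref{algo:D2p} (recall $H_e u(x) = \Delta_e u(x) - \Delta_f u(x) - \Delta_g u(x)$ when $e = f\oplus g$). Since $\Delta$ and $H$ are even and $\cVin(x)$ is symmetric, $\cU_0(x)$ is a stencil and $\vp$ from \eqref{eqdef:phi} is even, so Algorithm \ref{algo:D2p} is precisely Algorithm \ref{algo:MinTree} run on $(\cU_0(x),\vp)$. By Corollary \ref{corol:MinTree} this gives $\cDu_\cV u(x) = \min\{\vp(e);\, e \in V(x)\}$, where $V(x)$ is the union of the connected components of $\pm(1,1)$, $\pm(1,-1)$ in $\cU_0(x)$. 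I then define $\cU(x) := V(x) \cup \{(\pm1,0),(0,\pm1)\}$ and check that $\cU$ is a family of stencils: $V_8 \subset \cU(x) \subset \cVu(x)$ and $\cU(x) \subset \mT$ follow from $V(x) \subset \cU_0(x) \subset \cVu(x)$; (Hierarchy) holds because each component of $\cU(x)$ is either one of the four added isolated $V_8$ points or a component of $V(x)$ meeting a non-isolated $V_8$ root; and (Reachability) is inherited from $\cV$ once one notes $\cV(x) \subset \cU(x)$ (a non-isolated element of $\cV(x)$ is connected in $\cV(x)$, hence in $\cU_0(x)$, to a non-isolated, i.e.\ root, element of $V_8$, using (Hierarchy) for $\cV$ and the fact that isolated vertices of $\mT$ carry no edges). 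Since $\vp \equiv +\infty$ on the added points, the first point of Corollary \ref{corol:PositiveDifferences2} yields $\cDu_\cV u(x) = \cD_\cU u(x)$.

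The unconditional inequality is then immediate: $V(x) \subset \cVu(x)$, and by the first point of Corollary \ref{corol:PositiveDifferences2} applied to $\cVu$ (a family of stencils by Proposition \ref{prop:VExtended}) one has $\cD_\cVu u(x) = \min\{\vp(e);\, e \in \cVu(x)\}$; minimizing over the smaller set $V(x)$ can only increase the value, so $\cD_\cVu u \le \cDu_\cV u$ on $X$.

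For the equality under the hypothesis $\cDu_\cV u > 0$ on $X$, I would verify assumptions (a) and (b) of Proposition \ref{prop:DifferencesPropagation} for the pair $\cU, \cV$. Assumption (a), namely $\Delta_e u(x)>0$ on $\cU(x)$, follows from $\cD_\cU u(x) = \cDu_\cV u(x) > 0$ and the second point of Corollary \ref{corol:PositiveDifferences2}. The crux is (b): given $e \in \cVu(x)\sm\cU(x)$ with a parent $e' \in \cU(x)$, $e' \to e$, I argue that $e'$ is not one of the isolated $V_8$ points (these have no outgoing edges in $\mT$), so $e' \in V(x)$; writing $e = f\oplus g$ (note $e \in \cVin(x)$ since $\cV(x)\subset\cU(x)$), if $H_e u(x) < 0$ then $e \in \cU_0(x)$, so $e$ lies in the same $\cU_0(x)$-component as $e'$, hence in the root component $V(x) \subset \cU(x)$, contradicting $e \notin \cU(x)$; therefore $H_e u(x) \ge 0$. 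With (a) and (b), Proposition \ref{prop:DifferencesPropagation} supplies assumptions (A), (B) of Proposition \ref{prop:PhiInc}, whence $\cD_\cU u(x) = \cD_\cVu u(x)$ for all $x$, and combining with $\cDu_\cV u = \cD_\cU u$ gives $\cDu_\cV u = \cD_\cVu u$ on $X$. I expect the main obstacle to be the passage to the root component $\cU$: proving it is a genuine family of stencils, and above all checking (b), where the global ``pruning drops only useless branches'' intuition is made precise through the connectedness argument — the step that Proposition \ref{prop:DifferencesPropagation} is tailored to absorb.
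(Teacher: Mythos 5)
Your proposal is correct and follows essentially the same route as the paper's proof: the same intermediate sets $\cU_0(x)$ (the refinement-test set) and $\cU(x)$ (the root components), the identification $\cDu_\cV u = \cD_\cU u$ via Corollary \ref{corol:MinTree}, the inequality from $\cU(x) \subset \cVu(x)$, and the verification of hypotheses (a) and (b) of Proposition \ref{prop:DifferencesPropagation} to conclude through Proposition \ref{prop:PhiInc}. You merely spell out details the paper leaves implicit (that $\cU$ is a genuine family of stencils, and the connectedness argument behind (b)), and these details are correct.
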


\begin{proof}
Let $\cV$ be a family of stencils, and let $u \in \FX$. We introduce, for each $x \in X$ the set 
\begin{equation}
\label{eqdef:U0x}
\cU_0(x) := \cV(x) \cup \{e \in \cVin(x); \, H_e u(x) < 0\}.
\end{equation}
By construction $\cV(x) \subset \cU_0(x) \subset \cVu(x)$. 
We regard $\cU_0(x)$ as a subgraph of $\mT$, keeping all edges which endpoints are both in $\cU_0(x)$.
Denote by $\cU(x)$ the union of connected components intersecting $V_8$ in $\cU_0(x)$. By construction, $\cV(x) \subset \cU(x) \subset \overline \cV(x)$, and $\cU = (\cU(x))_{x \in X}$ is a family of stencils.

We recognize in the definition \eqref{eqdef:U0x} of $\cU_0(x)$ the (Refinement test) 
appearing in the computation of $\cDu_\cV(x)$ Algorithm \ref{algo:D2p}. Corollary \ref{corol:MinTree} applied to $\cU_0(x)$ and the map \eqref{eqdef:phi} thus states that: for any $x \in X$
\begin{equation}
\cDu_\cV u(x) = \cD_\cU u(x).
\end{equation}
Recalling that $\cU(x) \subset \overline \cV(x)$, for all $x \in X$, we obtain $\cD_\cVu u \leq \cD_\cU u = \cDu_\cV u$ on $X$ as announced. 

The stencils $\cU$ satisfy by construction assumption (b) of Proposition \ref{prop:DifferencesPropagation}.
Introducing the assumption that $\cDu_\cV = \cD_\cU$ is positive on $X$, and using 
Proposition \ref{prop:PositiveDifferences}, we find that $\cU$ also satisfies assumption (a) of Proposition \ref{prop:DifferencesPropagation}. Thus $\cD_\cU u = \cD_\cVu u$ on $X$, by Proposition \ref{prop:PhiInc}, which concludes the proof.
\end{proof}

\section{Numerical experiments}
\label{sec:Num}

We compare the introduced MA-LBR (Monge-Ampere using Lattice Basis Reduction), with two alternative solvers of Monge-Ampere equations. The Finite Differences scheme $\cD^\FD$, see \eqref{eqdef:DFD} and \cite{Loeper:2005fn}, is consistent but lacks the convergence guarantees associated to degenerate elliptic schemes. The Wide Stencil scheme $\cD^\FO_\cV$, see \eqref{eq:FO} and \cite{Froese:2013ez}, provides these guarantees, but at the price of a difficult compromise between consistency error and scheme locality, governed by the chosen stencil angular resolution see Figures \ref{fig:Stencils} and \ref{fig:RelativeError}.
Our numerical scheme the MA-LBR aims to combine the qualities of these two methods: consistency and monotony, with a comparable numerical cost. We use the MA-LBR adaptive implementation $\cDu_\cV$ of Algorithm \ref{algo:D2p}, with an $8$ points stencil $\cV(x) = V_8$, except on a layer of $4$ pixels along the domain boundary (where hierarchical refinement is mostly ineffective) where we use the $48$ points stencil of Figure \ref{fig:Stencils} (right).
The filtered scheme introduced in \cite{Froese:2013ez} also attempts to combine the strengths of the Wide Stencil scheme $\cD^\FO_\cV$ and the Finite Differences scheme $\cD^\FD$; this scheme is omitted in our experiments because it depends on several parameters, which make benchmarks and comparisons difficult.

We limit our attention to synthetic test cases, posed on the unit square $\Omega := ]0,1[^2$. A known convex function $U: \overline \Omega \to \R$ is numerically recovered from its hessian determinant $\rho := \det(\nabla^2 U)$, and its boundary values $\sigma := U_{|\partial \Omega}$. 
The tests are (supposedly) ordered by increasing difficulty, starting from a simple quadratic function and ending with a non-differentiable function (on a domain corner). 
\begin{itemize}
\item (Quadratic) $U(x) := \frac 1 2 \<x, M x\>$, where $M = M(\kappa, \theta)$ is as in \eqref{eq:MatParam} with  $\kappa:=10$, $\theta:=\pi/3$.
\item (Smoothed cone) $U(x) := \sqrt{\delta^2+ \|x-x_0\|^2}$, with $\delta := 0.1$ and $x_0:=(1/2,1/2)$. 
\item (Flat, \cite{Froese:2013ez})  $U(x) := (\|x-x_0\|_+ - r_0)^2 + \frac \ve 2 \|x-x_0\|^2$, with $r_0:=0.2$ and $\ve=10^{-6}$. 
\item (Singular, \cite{Froese:2013ez})  $U(x) := -\sqrt{2-\|x\|^2}$.
\end{itemize}
An iterative solver is applied to the discrete system \eqref{eq:DiscreteSys}, starting from a strictly convex seed, see Remark \ref{rem:Initialization}. Although the convergence guarantees of DE schemes only encompass Euler iterative solvers, we used without trouble a damped\footnote{%
Precisely, the iteration at a point $u \in \FX$ takes the form $u'= u+\delta^k v$, where $v$ is Newton's descent direction, $\delta:=0.7$, and $k \geq 0$ is the smallest integer such that: $\cD(u+\delta^k v)$ is positive on $X$ (except for scheme FD), and $\|f-\cD(u+\delta^k v)\|_{L^\infty(X)}$ is a local minimum in $k$. Convergence is numerically observed but not claimed in general.%
}
Newton solver. 
This may come as a surprise to those who regard Newton methods as local and excessively sensitive to initialization. The Monge-Ampere PDE fortunately benefits from a more favorable situation, since a suitably damped Newton method has been shown \cite{Loeper:2005fn} to converge globally - in the continuous setting, with periodic boundary conditions, and a Holder smooth positive right hand side. Discrete MA schemes which preserve the operator ellipticity may heuristically be expected to inherit this good behavior.

\begin{figure}
\centering
\includegraphics[width=5cm]{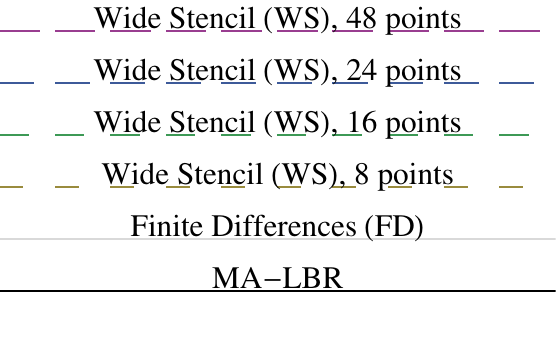}
\includegraphics[width=5cm]{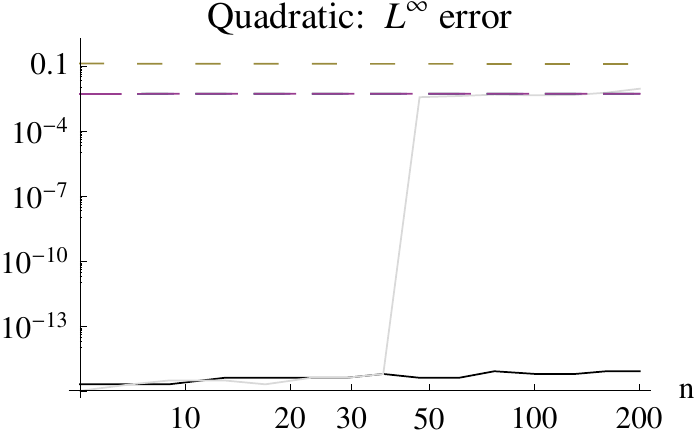}
\includegraphics[width=5cm]{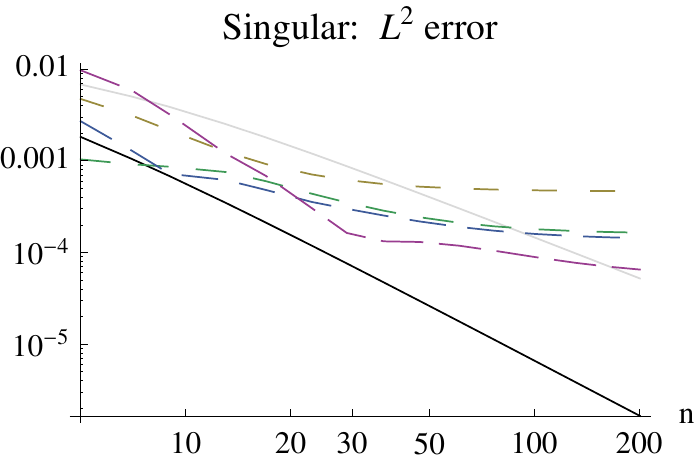}
\caption{Numerical error in the Quadratic and Singular cases. Logarithmic scale on all axes.}
\label{fig:Legend}
\label{fig:Quadratic}
\label{fig:SingularL1}
\end{figure}

Quadratic test case. The MA-LBR recovers this solution exactly, up to floating point errors, thanks to the adaptivity of  Algorithm \ref{algo:D2p} which refines the initial $8$ point stencil until the vector $(2,3)$ is included, and thus also the $M$-obtuse superbase $(2,3),(-1,-1),(-1,-2)$. Scheme FD also recovers the exact solution for a range of resolutions, but afterwards the discrete iterative solver switches to some erroneous alternative solution, see Figure \ref{fig:Quadratic}. Scheme WS produces a substantial $L^\infty$ error, which does not decrease with the grid scale. Indeed, it reflects a consistency error, and not a discretization error. 
Scheme FD could presumably recover the exact solution at all resolutions if its iterative solver was initialized more sensibly, for instance using the output of Scheme WS, or using a filtered combination of the two \cite{Froese:2013ez}.

\begin{figure}
\centering
\begin{tabular}{ccc}
\includegraphics[width=5cm]{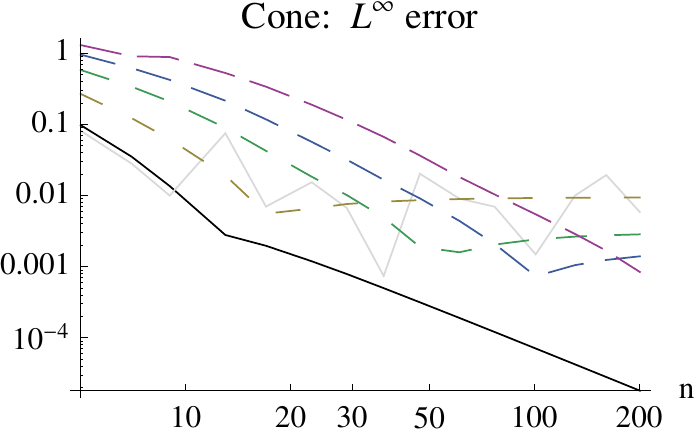} &
\includegraphics[width=5cm]{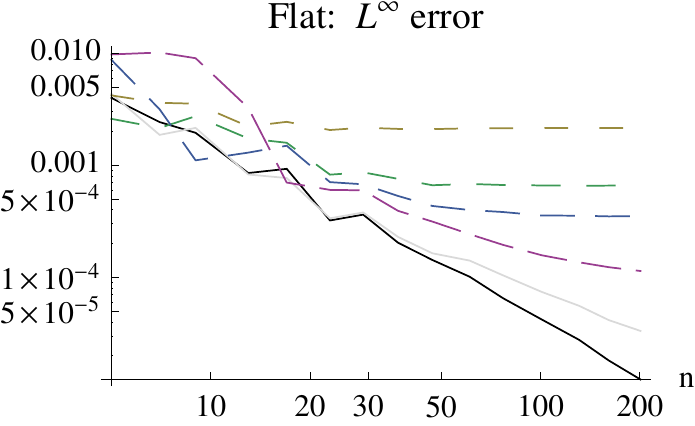} &
\includegraphics[width=5cm]{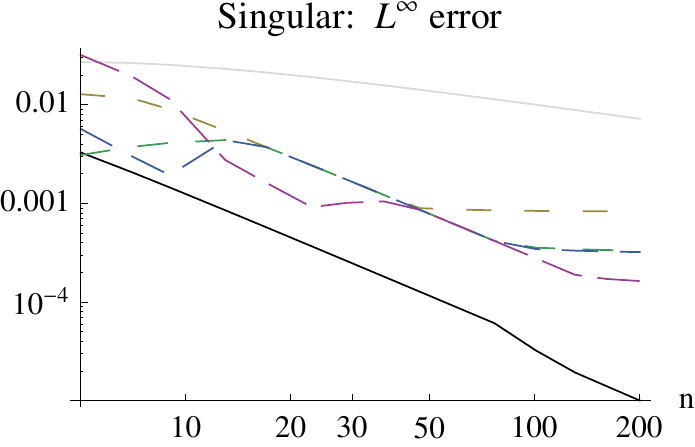} \\
\includegraphics[width=5cm]{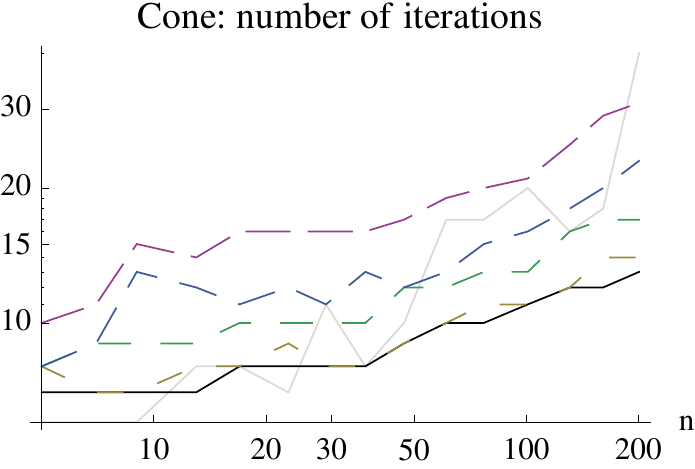} &
\includegraphics[width=5cm]{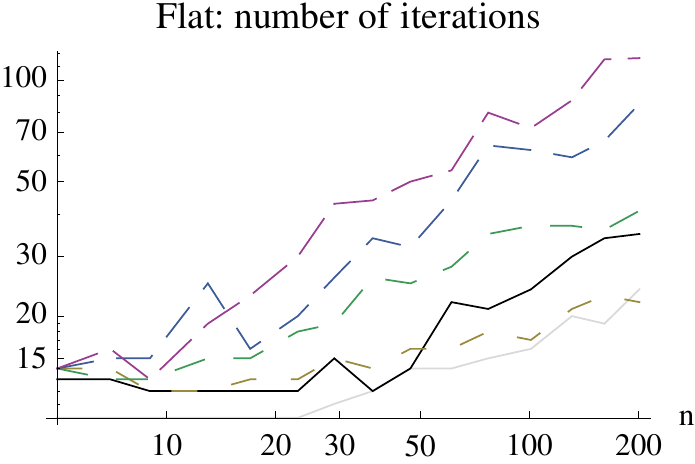} &
\includegraphics[width=5cm]{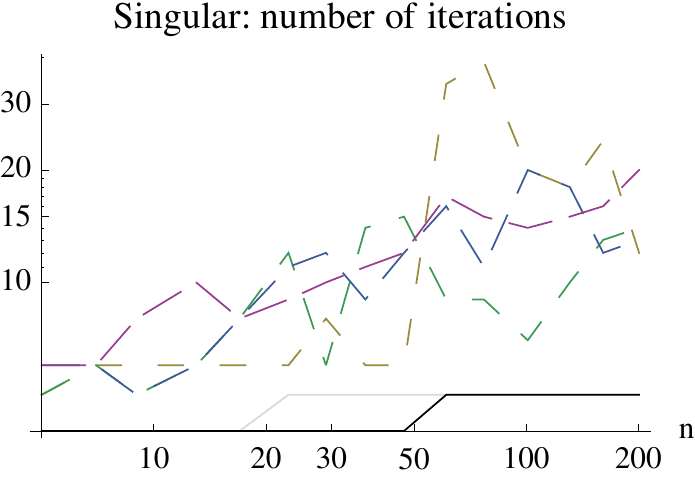} 
\end{tabular}
\caption{Numerical error and convergence speed in various test cases. Legend on Figure \ref{fig:Legend}.}
\end{figure}

Smoothed cone test case. The recovered function is $C^\infty$, yet its hessian is simultaneously (i) almost singular close to the domain boundary, and (ii) strongly peaked in a small region around the center. Scheme FD entirely fails this test. Choosing the best stencil for scheme WS is non-trivial, since point (i) suggests to use a large stencil for better angular resolution, but point (ii) mandates a scheme as local as possible. As a result, the best stencil, in terms of resulting $L^\infty$ error, successively has $8$, $16$, $24$ and $48$ points for grid sizes $n \times n$ with $n$ in the interval $[5,30]$, $[30, 80]$, $[80,160]$, $[160, \infty]$. The MA-LBR avoids the need for such manual parameter optimization, and produces numerical errors often one order of magnitude smaller. It also needs the least damped Newton iterations to reach convergence.

Flat test case. The recovered function is $C^1$, has a Lipschitz gradient, but is not $C^2$. It is also (almost) identically $0$ on a disk, up to a quadratic perturbation introduced to help the Newton solver. The effect of this perturbation on the numerical solution is negligible in comparison with the discretization error. The best stencil for Scheme WS is the largest one, with 48 points, for all resolutions $n \times n$ with $n \geq 17$. Despite the lack of $C^2$ regularity, scheme FD performs well in this test, better in fact than WS. The MA-LBR again outperforms the tested alternatives, and seems to provide a (slightly) improved asymptotic convergence rate in comparison with FD.

Singular test case. The recovered function is non differentiable at the domain corner $(1,1)$, where its gradient is formally $(+\infty, +\infty)$. Scheme FD fails this test, even if helped by initializing the iterative solver with a sampling of the known exact solution \cite{Froese:2011ka}. Regarding scheme WS, the $L^\infty$ error curves and the number of Newton iterations exhibit a puzzling erratic behavior: despite the scheme degenerate ellipticity, nasty things seem to occur close to singular point $(1,1)$. The $L^2$ error curve is smoother, see Figure \ref{fig:SingularL1}, and suggests that the optimal stencil size is successively $16$, $24$, $48$ at resolutions $n \times n$ with $n$ in the respective intervals $[0,8]$,$[8,23]$ and $[23, \infty]$ (note that an even larger stencil would be preferable at resolutions $\geq 100$). The MA-LBR avoids this difficult choice of stencil, and improves numerical error often by an order of magnitude. Our discretization handles well local singularities, and offers second order accuracy in smooth regions. The MA-LBR good balance is confirmed by the fast convergence of the damped Newton solver, which here never needs more than $5$ iterations.

\begin{remark}[Initialization]
\label{rem:Initialization}
We initialize the damped Newton iterative solver with the restriction $u = V_{|X \cup \partial \Omega} \in \FX$ of a strictly convex function $V \in C^0( \overline \Omega ) $, built using solely the  prescribed boundary conditions $\sigma$ on $\partial \Omega$.
 The construction is as follows:
(i) Find $\ve>0$ such that $\sigma_\ve(x) := \sigma(x)-\ve \|x\|^2$ is convex on any segment of $\partial \Omega$.
(ii) Find the maximal convex extension $\Sigma_\ve : \overline \Omega \to X$ of $\sigma_\ve$. 
This step requires the computation of a three dimensional convex hull, which is a classical problem of discrete geometry for which efficient procedures are available \cite{PrincetonUniversityDeptofComputerScience:1991uv}.
(iii) Initialize with the strictly convex $V(x) := \Sigma_\ve(x) + \ve \|x\|^2$. 
\end{remark}

\section{Conclusion}

The MA-LBR introduced in this paper is a new numerical scheme for two dimensional Monge-Ampere PDEs, which combines consistency and degenerate ellipticity. In our numerical experiments, these properties become accuracy and robustness. 
Our scheme is not strictly local and may involve long range stencils, but they are built in a sparse, adaptive, and anisotropic manner using a guaranteed and parameter free refinement algorithm. Our construction is also shown to be as local as it can be, among symmetric, consistent and degenerate elliptic schemes for the Monge-Ampere PDE.
The analysis of our algorithm involves tools seldom used in the context of numerical analysis, including elements of lattice geometry \cite{Conway:1992cq}, and the arithmetic of the Stern-Broccot tree.

Future research will be devoted to some natural questions that the present method cannot directly address. In particular (i) the computation of solutions of the weaker Alexandroff type, (ii) the additional difficulties tied to the discretization of optimal transport problems, instead of boundary value problems, (iii) Monge-Ampere problems posed on three dimensional domains, and (iv) local adaptation and refinement of the discretization grid.\\

\textbf{Acknowledgements:} The  authors gratefully acknowledge the support of the ANR, through the project ISOTACE (ANR-12-MONU-0013), the grant NS-LBR ANR-13-JS01-0003-01, and INRIA through the "action exploratoire" MOKAPLAN.   

\appendix 
\section{Structure of the Stern-Brocot tree}

\subsection{Unique decomposition $e=f\oplus g$}
\label{sec:DecompIntegerVector}

The two following propositions together establish Proposition \ref{prop:Decomp}.

\begin{proposition}
Let $f,g$ be a direct acute basis of $\mZ^2$. Then $e := f+g$ has co-prime  coordinates, both non-zero.
\end{proposition}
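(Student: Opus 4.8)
The plan is to set $f = (f_1, f_2)$ and $g = (g_1, g_2)$, and to exploit the two defining properties of a direct acute basis: $\det(f,g) = f_1 g_2 - f_2 g_1 = 1$ and $\<f,g\> = f_1 g_1 + f_2 g_2 \geq 0$. Writing $e = (e_1,e_2) = (f_1+g_1, f_2+g_2)$, there are two independent claims, coprimality of the coordinates and non-vanishing of each of them, which I would treat separately. Coprimality will follow from a one-line determinant identity, whereas non-vanishing is where the acuteness hypothesis genuinely enters.

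For coprimality, I would observe that any common divisor of $e_1$ and $e_2$ divides every integer combination of them, in particular $g_2 e_1 - g_1 e_2$. Expanding, the cross terms $g_1 g_2$ cancel and one is left with $g_2 e_1 - g_1 e_2 = f_1 g_2 - f_2 g_1 = \det(f,g) = 1$. Hence $\gcd(e_1, e_2)$ divides $1$, i.e.\ $\gcd(e) = 1$.

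For the non-vanishing of the coordinates I would argue by contradiction. Suppose $e_1 = 0$, equivalently $g_1 = -f_1$. Substituting into the determinant gives $1 = \det(f,g) = f_1(f_2 + g_2) = f_1 e_2$, which forces $f_1 = \pm 1$ (so $f_1^2 = 1$) and $e_2 = f_1$. The acuteness condition then reads $0 \leq \<f,g\> = -f_1^2 + f_2 g_2 = -1 + f_2 g_2$, so $f_2 g_2 \geq 1$. This collides with $f_2 + g_2 = e_2 = \pm 1$: two integers summing to $\pm 1$ can never have product $\geq 1$, since the real quadratic $p \mapsto p(\pm 1 - p)$ is bounded above by $1/4 < 1$ and $f_2 g_2$ is an integer, whence $f_2 g_2 \leq 0$. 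The case $e_2 = 0$ is entirely analogous (now $g_2 = -f_2$, the identity $\det(f,g) = -f_2 e_1 = 1$ forces $f_2^2 = 1$, and $\<f,g\> = f_1 g_1 - 1 \geq 0$ clashes with $f_1 + g_1 = e_1 = \pm 1$), so both coordinates are non-zero.

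The only place requiring care is the non-vanishing step: tracking signs through the substitution $g_1 = -f_1$ and isolating the elementary fact that integers summing to $\pm 1$ have non-positive product. I do not anticipate a serious obstacle; the main subtlety is simply to make sure it is the acuteness inequality $\<f,g\> \geq 0$, and not mere linear independence, that rules out $e$ lying on a coordinate axis.
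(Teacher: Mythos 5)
Your proof is correct, and the coprimality half coincides with the paper's: your integer combination $g_2 e_1 - g_1 e_2$ is just the coordinate expansion of $\det(e,g)=\det(f,g)=1$ (the paper writes $\det(f,e)=\det(f,f+g)=1$). The non-vanishing half is where you take a genuinely different route. The paper's argument is a two-line norm computation: $\|e\|^2=\|f\|^2+2\langle f,g\rangle+\|g\|^2\geq 2$, since $\langle f,g\rangle\geq 0$ and $f,g$ are nonzero integer vectors; if $e$ had a zero coordinate, then by the already-established coprimality the other coordinate would be $\pm 1$, so $\|e\|=1<\sqrt 2$, a contradiction. Your argument instead fixes the coordinate that vanishes, substitutes $g_1=-f_1$ into the determinant to get $f_1 e_2=1$, and then plays the acuteness consequence $f_2 g_2\geq 1$ against the elementary fact that two integers summing to $\pm 1$ have product at most $0$. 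Both exploit acuteness in an essential way, as they must: $f=(1,0)$, $g=(-1,1)$ is a direct but non-acute basis with $f+g=(0,1)$ on an axis. The paper's version buys brevity and a clean separation of the roles of the two hypotheses (acuteness makes $e$ long, coprimality makes axis vectors short), but it consumes coprimality as an input to the second step; yours is pure integer arithmetic, slightly longer because of the sign bookkeeping, and has the mild structural advantage that the non-vanishing step is logically independent of the coprimality step.
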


\begin{proof}
One has $\det(f,e) = \det(f,f+g) = 1$, hence the coordinates of $e$ are co-prime as announced. Also $\|e\|^2 = \|f\|^2+ 2\<f,g\>+\|g\|^2 \geq 2$, hence $\|e\|>1$. Assuming for contradiction that a coordinate of $e$ is zero, we find that the other one can only be $\pm 1$, since they are co-prime. But then $\|e\|=1$ which is a contradiction. This concludes the proof.
\end{proof}

In the following, a quadrant of the plane is a set of the form: for some $\alpha, \beta \in \{-1,1\}$
\begin{equation*}
Q_{\alpha, \beta} := \{(a,b) \in \R^2; \, \alpha a \geq 0, \, \beta b \geq 0\}.
\end{equation*}

\begin{proposition}
\label{prop:Quadrant}
Let $e = (a,b) \in \Z^2$ be such that $\gcd(a,b) = 1$ and $a b \neq 0$. Then there exists a unique direct basis $(f,g)$ of $\Z^2$ such that $e=f+g$. Furthermore $f$ and $g$ belong to the same (closed) quadrant of the plane as $e$.
\end{proposition}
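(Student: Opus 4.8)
The plan is to establish existence, the quadrant property, and uniqueness separately, noting that the basis produced will automatically be \emph{acute} (that is, $\langle f,g\rangle\ge 0$), so that it is exactly the direct acute basis underlying Proposition~\ref{prop:Decomp}. I would first reduce to the case where $e$ lies in the closed first quadrant $Q_{1,1}$, i.e.\ $a,b\ge 1$. Indeed, applying one of the four maps $\mathrm{diag}(\pm1,\pm1)$ sends $e$ into $Q_{1,1}$; reflecting a solution back preserves the sum $f+g=e$ and, being an isometry, preserves acuteness, while multiplying $\det(f,g)$ by $\pm1$. Since swapping $f$ and $g$ flips the sign of $\det(f,g)$, exactly one ordering is direct, and the common quadrant of $f,g$ is carried to the quadrant of $e$. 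So it suffices to treat $a,b\ge 1$.

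For existence together with the quadrant clause, I would solve the linear Diophantine equation $bx-ay=1$, which is solvable since $\gcd(a,b)=1$, and select the unique solution $(x,y)$ with $0\le y\le b-1$ (possible because the $y$-coordinates of the solutions form a single residue class modulo $b$). A one-line estimate from $x=(1+ay)/b$ then yields $1\le x\le a$: the lower bound since $x\ge 1/b>0$ and $x\in\Z$, and the upper bound since $x\le a+(1-a)/b\le a$ as $a\ge1$. Setting $f:=(x,y)$ and $g:=e-f=(a-x,\,b-y)$, both vectors have non-negative coordinates, hence lie in $Q_{1,1}$, the quadrant of $e$; non-negativity of the coordinates gives $\langle f,g\rangle\ge 0$ for free, and $\det(f,g)=\det(f,e-f)=\det(f,e)=bx-ay=1$ shows $(f,g)$ is a direct basis. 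This settles existence and the ``furthermore'' clause at once.

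For uniqueness I would parametrize \emph{all} direct bases with sum $e$: if $\det(f_0,e)=1$, then $\det(f,e-f)=\det(f,e)$ shows the ordered direct bases are exactly $(f_k,\,e-f_k)$ with $f_k:=f_0+ke$, $k\in\Z$. Set $\psi(k):=\|f_k\|^2+\|e-f_k\|^2$, so that $\varphi(k):=\langle f_k,e-f_k\rangle=\tfrac12(\|e\|^2-\psi(k))$. Using the median identity $\|f\|^2+\|e-f\|^2=2\|f-\tfrac12 e\|^2+\tfrac12\|e\|^2$, together with the fact that the points $f_k$ run through a lattice line lying at distance $\|e\|^{-1}$ from the origin (because $\det(f,e)=1$ reads $\langle f,e^\perp\rangle=-1$), the minimum of $\psi$ over $k\in\R$ is $\tfrac12\|e\|^2+2\|e\|^{-2}$. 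Consequently $\{k\in\R:\varphi(k)\ge0\}=\{k:\psi(k)\le\|e\|^2\}$ is an interval of length $\sqrt{1-4\|e\|^{-4}}<1$, which contains at most one integer. Thus at most one $k$ gives an acute basis, and with existence the direct acute basis is unique.

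The main obstacle is existence, and precisely the fact that it does \emph{not} follow from the length bound used for uniqueness: an interval of length just below $1$ may miss the integer lattice entirely, so the real-analytic estimate alone cannot manufacture a solution. The arithmetic selection of the Diophantine solution with $0\le y\le b-1$ is what simultaneously guarantees a solution and forces both parts into the quadrant of $e$; the only remaining care is the orientation bookkeeping under the reflections used in the quadrant reduction.
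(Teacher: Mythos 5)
Your proof is correct, and it coincides with the paper's own argument for existence but diverges genuinely on uniqueness. For existence, both you and the paper normalize a Bezout solution of the coprimality relation into a box (your $0\le y\le b-1$ with the derived $1\le x\le a$; the paper's $0\le u<a$ with the derived $0<v\le b$), which simultaneously yields $\det(f,g)=1$, the quadrant containment, and hence acuteness; your preliminary reduction by the reflections $\mathrm{diag}(\pm1,\pm1)$, with the orientation/swap bookkeeping, plays the role of the paper's rotation by $\pi/2$, which is orientation-preserving and so needs no such bookkeeping. For uniqueness, both proofs start from the same coset structure (all direct bases summing to $e$ are $(f_0+ke,\,e-f_0-ke)$, $k\in\Z$), but the mechanisms for excluding $k\neq 0$ differ. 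The paper argues arithmetically: using the box normalization, for $k<0$ the coordinates of $g'=g+ke$ become non-positive while those of $f'=f-ke$ stay positive, contradicting $\langle f',g'\rangle\ge 0$ (and symmetrically for $k>0$). You argue metrically: along the line $\{f:\det(f,e)=1\}$, the acuteness condition $\langle f, e-f\rangle\ge 0$ cuts out a real interval of length $\sqrt{1-4\|e\|^{-4}}<1$, which meets the lattice parametrization in at most one point. Your route buys independence from the quadrant reduction (it works verbatim for any $e$ with $ab\neq 0$, and shows the acute solution is essentially unique even over $\R$), at the cost of the quadratic-minimization computation; the paper's route is purely arithmetic and its sign analysis makes transparent why quadrant containment characterizes the solution. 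A shared subtlety you handled correctly: uniqueness fails for ``direct basis'' read literally (every $(f+ke,\,g-ke)$ is again a direct basis summing to $e$), so acuteness --- which your construction delivers for free and which the paper's uniqueness step invokes as $\langle f',g'\rangle\ge 0$ --- is the operative condition, consistent with Proposition \ref{prop:Decomp}.
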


\begin{proof}
Let $R$ be the rotation of $\pi /2$. The image $(R f, R g)$ of a direct acute basis of $\Z^2$ still is one. Also, $R$ cyclically permutes the four quadrants of the plane. Without loss of generality, we may thus assume that $a$ and $b$ are positive.

Existence.  
Consider a Bezout relation: $u,v \in \Z^2$ such that $a v - b u = 1$. For any $k \in \mZ$, one also has the relation $a (v+k b) - b (u+k a) = 1$. By euclidean division, and up to such a transformation, we may therefore assume that $0 \leq u < a$. Then $a v = 1+ b u \leq 1+b (a-1) \leq a b$, thus $0 < v \leq b$. 
The vectors $f := (a-u,b-v)$ and $g  := (u,v)$ have non-negative entries. Hence they belong to the same quadrant as $e$, and satisfy $\<f,g\> \geq 0$. Also $\det(f,g) = (a-u) v - (b-v) u = a v-b u = 1$. This concludes the proof of existence.

Uniqueness. Let $(f',g')$ be another direct acute basis such that $e=f'+g'$. We introduce the coordinates $(u',v')$ of $g'$, and observe that $f' = (a-u',b-v')$. Then $\det(f',g') = (a-u')v' - (b-v') u' = a v' -b u'$. We recognize another Bezout relation between the co-prime integers $a,b$. Hence $u' = u+k a$ and $v' = v+k b$ for some $k \in \Z$. Recall that $0 \leq u < a$ and $0 < v \leq b$. If $k<0$, then the coordinates of $g'=(u',v')=g+k e$ satisfy $u \leq u-a '<0$, $v'\leq v-b \leq 0$, while both coordinates of $f' = f-k e$ are positive; this contradicts the assumption $\<f',g'\>\geq 0$. The case $k>0$ is excluded by a similar argument, exchanging the roles of $f'$ and $g'$. Hence $k = 0$ which concludes the proof of uniqueness.
\end{proof}

\subsection{Connected components of the graph $\mT$}
\label{sec:ComponentsT}

We identify the structure of the graph $\mT$, as announced Proposition \ref{prop:V8T}.

\begin{lemma}
\label{lem:Quadrant}
All edges of $\mT$ have both their endpoints in the interior of the same quadrant.
\end{lemma}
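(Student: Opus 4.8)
The plan is to reduce the statement to the earlier results on the decomposition $e = f \oplus g$, namely Proposition \ref{prop:Decomp} together with its refinement Proposition \ref{prop:Quadrant}, and Proposition \ref{prop:SumChildren}. First I would observe that every edge of $\mT$ leaves from a vertex $e$ admitting a decomposition $e = f \oplus g$, and arrives at either $f \oplus e = f + e$ or $e \oplus g = e + g$, using Proposition \ref{prop:SumChildren} to identify the two targets with these sums. Since $e = f \oplus g$, the hypotheses of Proposition \ref{prop:Decomp} force both coordinates of $e$ to be non-zero, so $e$ lies in the interior of a single well-defined quadrant $Q_{\alpha, \beta}$.

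Next I would invoke Proposition \ref{prop:Quadrant}, which guarantees that both $f$ and $g$ lie in the same closed quadrant $Q_{\alpha, \beta}$ as $e$. The remaining content of the lemma is then sign bookkeeping: writing $e = (a,b)$ with $\alpha a > 0$ and $\beta b > 0$, and $f = (f_1, f_2)$ with $\alpha f_1 \geq 0$ and $\beta f_2 \geq 0$, the target $e' = f + e$ has coordinates satisfying $\alpha(a + f_1) > 0$ and $\beta(b + f_2) > 0$, so $e'$ lies in the \emph{interior} of the same quadrant $Q_{\alpha, \beta}$. The case $e' = e + g$ is handled identically, exchanging the roles of $f$ and $g$.

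Both endpoints of an arbitrary edge thus lie in the interior of one common quadrant, which is exactly the assertion. I do not anticipate any genuine obstacle: the only subtlety is ensuring that the ``same closed quadrant'' information from Proposition \ref{prop:Quadrant} combines with the strict interiority of $e$ to yield strict interiority of the sum, and this is immediate because adding a vector with non-negative (signed) coordinates to one with strictly positive (signed) coordinates preserves strict positivity. The essential work has already been packaged into Proposition \ref{prop:Quadrant}, so the present statement is really a direct corollary.
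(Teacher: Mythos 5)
Your proof is correct and follows essentially the same route as the paper: identify each edge as $e \to f+e$ or $e \to e+g$ with $e = f \oplus g$, invoke Proposition \ref{prop:Quadrant} to place $f,g$ in the same quadrant as $e$, and conclude that the endpoint stays in the interior. The only cosmetic difference is that the paper phrases the last step via convexity of the quadrant (a convex cone), while you do the equivalent coordinate-wise sign bookkeeping.
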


\begin{proof}
Any edge of $\mT$ has the form $e \to f\oplus e$ or $e \to e \oplus g$, where $e = f\oplus g$. By proposition \ref{prop:Quadrant}, $f,g$ belong to the same quadrant as $e$. Since both coordinates of $e$ are non-zero, it belongs to the interior of its quadrant. Since this quadrant is a convex cone, the edge joins as announced two point of its interior. 
\end{proof}

\begin{lemma}
\label{lem:SubSum}
Let $e=f \oplus g$. If $\|f\|>\|g\|$ then $f = (f-g)\oplus g$. If $\|g\|>\|f\|$ then $g = f\oplus (g-f)$. If $\|f\|=\|g\|$ then $\|e\|^2 = 2$.
\end{lemma}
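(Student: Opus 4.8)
The plan is to prove each of the three conclusions by exhibiting the claimed decompositions as genuine $\oplus$-decompositions, that is, by checking the defining properties of Proposition~\ref{prop:Decomp} and appealing to the uniqueness built into Definition~\ref{def:OPlus}. Throughout I would use that $e = f \oplus g$ means $(f,g)$ is \emph{the} direct acute basis summing to $e$, so $\det(f,g) = 1$, $\<f,g\> \geq 0$, and $e$ has co-prime, non-zero coordinates. The one analytic tool needed is the identity \eqref{eq:ComplexModuleMultiplication}, which here reads $\<f,g\>^2 = \|f\|^2\|g\|^2 - 1$ since $\det(f,g)=1$.

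For the case $\|f\| > \|g\|$, I would verify that $(f-g,\,g)$ is a direct acute basis summing to $f$, and that $f$ itself satisfies the hypotheses of Proposition~\ref{prop:Decomp}; uniqueness then forces $f = (f-g)\oplus g$. The sum $(f-g)+g = f$ and the determinant $\det(f-g,g) = \det(f,g) = 1$ are immediate. That $f$ has co-prime coordinates follows from $\det(f,g)=1$, and both coordinates are non-zero, since otherwise co-primality would give $f \in \{(\pm1,0),(0,\pm1)\}$, hence $\|f\|^2 = 1 \leq \|g\|^2$, contradicting $\|f\|>\|g\|$. The remaining, and genuinely non-trivial, point is the acuteness $\<f-g,g\> = \<f,g\> - \|g\|^2 \geq 0$, i.e.\ $\<f,g\> \geq \|g\|^2$. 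As $\<f,g\> \geq 0$, this is equivalent to $\<f,g\>^2 \geq \|g\|^4$, which by \eqref{eq:ComplexModuleMultiplication} amounts to
\begin{equation*}
\|g\|^2\bigl(\|f\|^2 - \|g\|^2\bigr) \geq 1.
\end{equation*}
Both factors on the left are positive integers, since $g \neq 0$ gives $\|g\|^2 \geq 1$ and $\|f\| > \|g\|$ gives $\|f\|^2 - \|g\|^2 \geq 1$; the inequality follows.

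The case $\|g\| > \|f\|$ is entirely symmetric, exchanging the roles of $f$ and $g$: one checks $\det(f,g-f) = 1$, the acuteness $\<f,g-f\> = \<f,g\> - \|f\|^2 \geq 0$ via $\|f\|^2(\|g\|^2-\|f\|^2)\geq 1$, and that $g$ has co-prime non-zero coordinates, to conclude $g = f\oplus(g-f)$. For the case $\|f\| = \|g\|$, I would argue purely arithmetically: writing $n := \|f\|^2 = \|g\|^2$ and $s := \<f,g\> \in \Z_{\geq 0}$, identity \eqref{eq:ComplexModuleMultiplication} gives $s^2 = n^2 - 1$, i.e.\ $(n-s)(n+s) = 1$. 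Since $n \geq 1$ and $s \geq 0$, both factors are positive integers, forcing $n-s = n+s = 1$, hence $n = 1$ and $s = 0$. Then $\|e\|^2 = \|f\|^2 + 2\<f,g\> + \|g\|^2 = 1 + 0 + 1 = 2$.

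The main obstacle is the acuteness inequality $\<f,g\> \geq \|g\|^2$ in the first case, and its analogue in the second: it is the only step that is not a one-line identity, and it is precisely where the integrality of the norms and inner product, combined with \eqref{eq:ComplexModuleMultiplication}, is essential. Everything else, namely the determinant, the sum, and the co-primality and non-vanishing of coordinates, reduces to routine bookkeeping together with the already-established uniqueness of $\oplus$-decompositions.
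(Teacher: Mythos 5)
Your proof is correct and takes essentially the same route as the paper: the key acuteness inequality $\<f,g\> \geq \|g\|^2$ is extracted from \eqref{eq:ComplexModuleMultiplication} combined with integrality, and the equal-norm case from the factorization $n^2 - s^2 = 1$ (which the paper phrases as ``consecutive perfect squares''). The only difference is presentational: your explicit verification that $f$ has co-prime, non-zero coordinates is harmless but redundant, since by Proposition \ref{prop:Decomp} the sum of any direct acute basis automatically satisfies these conditions.
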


\begin{proof}
Since $(f,g)$ is a direct basis, one has $\det(f,g)=1$. Hence $\<f,g\>^2+1 = \|f\|^2\|g\|^2$ by \eqref{eq:ComplexModuleMultiplication}.

If $\|f\|>\|g\|$ then $\<f,g\>^2 +1 > \|g\|^2 \|g\|^2$, thus $\<f,g\> \geq \|g\|^2$, and therefore $\<f-g,g\>\geq 0$. Remarking in addition that $\det(f-g,f) = \det(f,g)=1$, we obtain as announced $f=(f-g)\oplus g$. The case $\|g\|>\|f\|$ is similar.

If $\|f\|=\|g\|$, then $\<f,g\>^2$ and $\|f\|^2\|g\|^2$ are consecutive perfect squares, hence equal to  $0$ and $1$. Thus $\<f,g\>=0$, $\|f\|^2 = \|g\|^2=1$, and therefore $\|e\|^2 = \|f+g\|^2=2$, as announced. 
\end{proof}

\begin{lemma}
\label{lem:AtMostOne}
Let $e = f \oplus g$. If $\|e\|^2=2$, then no edge of $\mT$ arrives at $e$. If $\|e\|^2>2$, then exactly one edge of $\mT$ arrives at $e$, and it must be either $f \to e$ or $g \to e$.
\end{lemma}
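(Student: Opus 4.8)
The plan is to determine, for a given $e = f \oplus g$, all vertices $e'$ of $\mT$ admitting an edge $e' \to e$; this will be governed entirely by the uniqueness in Proposition \ref{prop:Decomp} together with the norm dichotomy of Lemma \ref{lem:SubSum}. First I would unwind the definition of the edges of $\mT$: an edge $e' \to e$ exists precisely when, writing the decomposition $e' = p \oplus q$, one has $e = p \oplus e'$ or $e = e' \oplus q$. Since the decomposition $e = f \oplus g$ is unique, the first possibility forces $p = f$ and $e' = g$, while the second forces $e' = f$ and $q = g$. Hence the only conceivable sources are $f$ and $g$, and more precisely: an edge $g \to e$ exists iff $g$ itself decomposes as $g = f \oplus q$ for some $q$, and an edge $f \to e$ exists iff $f$ decomposes as $f = p \oplus g$ for some $p$.

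For the case $\|e\|^2 > 2$, I would first note that Lemma \ref{lem:SubSum} excludes $\|f\| = \|g\|$ (which would force $\|e\|^2 = 2$), so without loss of generality $\|f\| > \|g\|$, the reverse case being symmetric. Lemma \ref{lem:SubSum} then gives $f = (f-g) \oplus g$, which is exactly a decomposition of $f$ with second component $g$; by Proposition \ref{prop:SumChildren} this makes $e = f \oplus g$ the right child of $f$, so the edge $f \to e$ exists. To see it is the only one, I would rule out $g \to e$: were $g = f \oplus q$, then $g = f + q$ with $\<f, q\> \geq 0$ and $q$ a nonzero integer vector, whence $\|g\|^2 = \|f\|^2 + 2\<f,q\> + \|q\|^2 \geq \|f\|^2 + 1 > \|f\|^2$, contradicting $\|f\| > \|g\|$. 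Thus exactly one edge arrives at $e$, and it is $f \to e$ (or $g \to e$ in the symmetric case), as claimed.

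For the case $\|e\|^2 = 2$, I would expand $\|e\|^2 = \|f\|^2 + 2\<f,g\> + \|g\|^2$ and use $\|f\|^2, \|g\|^2 \geq 1$ together with $\<f,g\> \geq 0$ to conclude $\|f\|^2 = \|g\|^2 = 1$ and $\<f,g\> = 0$; thus $f$ and $g$ are unit coordinate vectors of the form $(\pm 1, 0)$ or $(0, \pm 1)$. By the reduction above, any arriving edge would require $f$ or $g$ to admit an $\oplus$-decomposition, but a vector with a vanishing coordinate violates the hypothesis $a b \neq 0$ of Proposition \ref{prop:Decomp} and so has none. Hence no edge arrives at $e$.

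The only delicate point is the reduction step: correctly exploiting the uniqueness in Proposition \ref{prop:Decomp} to collapse the a priori unbounded set of candidate sources $e'$ down to the two vectors $f$ and $g$, and then translating ``$e$ is a child of $e'$'' into a decomposition condition on $f$ or $g$ that Lemma \ref{lem:SubSum} can settle. Once that bookkeeping is in place, the remaining arguments are short norm comparisons.
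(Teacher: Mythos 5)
Your proof is correct. The reduction step---using the uniqueness in Proposition \ref{prop:Decomp} to shrink the candidate sources to $f$ and $g$, and to translate ``an edge $g \to e$ exists'' into ``$g$ decomposes as $f \oplus q$''---is exactly the paper's first step. You part ways on mutual exclusivity. The paper never splits into cases on $\|e\|^2$: it notes that the edge $g \to e$ forces $\langle f, g - f\rangle \geq 0$, that the edge $f \to e$ forces $\langle f - g, g\rangle \geq 0$, and adds the two inequalities to obtain $-\|f - g\|^2 \geq 0$, hence $f = g$, contradicting that $(f,g)$ is a basis; this shows at most one edge arrives, symmetrically and without deciding which. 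You instead run everything through the norm dichotomy of Lemma \ref{lem:SubSum}: without loss of generality $\|f\| > \|g\|$, the longer parent supplies the incoming edge $f \to e$, and the shorter one cannot be a source because $g = f \oplus q$ would force $\|g\|^2 \geq \|f\|^2 + 1$. Each route has its merits. The paper's addition trick is more compact and needs no case distinction, but its written proof establishes only ``at most one edge, and it is $f \to e$ or $g \to e$''; the existence claim for $\|e\|^2 > 2$ and the empty case $\|e\|^2 = 2$ are delegated to Lemma \ref{lem:SubSum} and to the summary paragraph following the lemma. Your version is self-contained on both counts---it proves existence explicitly and settles $\|e\|^2 = 2$ by observing that $f, g$ are then coordinate vectors admitting no $\oplus$-decomposition---and it extracts the extra information that the unique incoming edge always comes from the longer of the two parents.
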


\begin{proof}
Edges of $\mT$ have the form $e' \to f' \oplus e'$ (resp. $e' \to e' \oplus g'$) where $e' = f'\oplus g'$. If such an edge arrives at $e$, then by uniqueness of the decomposition, one must have $f' = f$ and $e' = g$, thus $g' = g-f$ (resp.\ $e' = f$ and $g' = g$, thus $f' = f-g$). This corresponds to the two announced cases $f \to e$ or $g \to e$. 

If the first case is realized, then $\<f,g-f\> = \<f',g'\> \geq 0$ (resp.\ second case, $\<f-g,g\> \geq 0$).
Assuming for contradiction that the two cases are realized, we obtain by addition $-\|f-g\|^2 \geq 0$, and therefore $f=g$. This contradicts the assumption that $(f,g)$ is a basis of $\mZ^2$.
\end{proof}

Let us summarize the properties of the graph $\mT$. By Lemma \ref{lem:Quadrant} all edges of $\mT$ have their endpoints within the interior of same quadrant. Also, for any $e \in \mT$:
\begin{itemize}
\item If $\|e\|^2 = 1$, then no edge arrives at $e$, or leaves from $e$.
\item If $\|e\|^2 = 2$, then no edge arrives at $e$, but two edges leave from $e$.
\item If $\|e\|^2 > 2$, then one edge arrives at $e$, and two edges leave from $e$.
\end{itemize}
Furthermore the graph $\mT$ is well founded, in the sense that there is no infinite sequence $e_0 \leftarrow e_1 \leftarrow \cdots$. Indeed the presence of an edge $e \to e'$ between two points implies a strict inequality $\|e\|^2 < \|e'\|^2$ on their squared norms, which are positive integers. These properties together characterize a graph of the form described in Proposition \ref{prop:V8T}.

\subsection{Two lemmas from the preprint \cite{Mirebeau:Vn5Iu9VK}}
\label{sec:TwoLemmasFromMir}

\begin{lemma*}[Lemma 2.3 in \cite{Mirebeau:Vn5Iu9VK}]
\label{lem:SuperbaseDecomp}
Let $(e_0,e_1,e_2)$ be a superbase of $\Z^2$, ordered so that $\|e_0\| \geq \max\{\|e_1\|, \|e_2\|\}$ and $\det(e_1,e_2)=1$. Then $-e_0 = e_1 \oplus e_2$.
\end{lemma*}

\begin{proof}
Observing that $\det(e_0, e_1)=1$, we find that the coordinates of $e_0$ are co-prime. Since $e_0,e_1,e_2$ are pairwise non-collinear, at least one of them is not in the set $V_4 := \{(\pm 1, 0), (0, \pm 1)\}$. Since $e_0$ has the largest norm, $e_0 \notin V_4$. By Proposition \ref{prop:Decomp}, there exists a direct acute basis $(e'_1,e'_2)$ such that $-e_0 = e'_1 \oplus e'_2$. 

Since $\det(e_0,e'_1-e_1)=1-1=0$, there exists $k \in \R$ such that $e'_1 = e_1+k e_0$. Since $e_0$ has co-prime coordinates, $k \in \mZ$. If $k>0$, then we observe that $\<e_1,-e_0\> = \<e'_1-k e_0, -e_0\> = \|e'_1\|^2 + \<e'_1,e'_2\> + k \|e_0\|^2 > \|e_0\|^2$, which implies the contradiction $\|e_1\|>\|e_0\|$. If $k<0$, then observing that $e'_2 = -e_0 - e'_1 = e_2-k e_0$ we reach a similar contradiction $\|e_2\|>\|e_0\|$. Thus $k=0$, and therefore $e'_1 = e_1$, $e'_2 = e_2$, which concludes the proof.
\end{proof}

For any $f,g \in \R^2$, we denote $\ConeS(f,g) := \{\lambda f + \mu g; \, \lambda, \mu > 0\}$ (the interior of $\Cone(f,g)$). 

\begin{lemma*}[Lemma 3.2 in \cite{Mirebeau:Vn5Iu9VK}, here Lemma \ref{lem:ParentsTriangle}]
\label{lem:ConsecutiveParents}
Let $e = f' \oplus g'$ and let $(f,g)$ be a direct acute basis of $\Z^2$ such that $e \in \ConeS(f,g)$. Then $f+g$, $f'$, $g'$, belong to the triangle $T:=[e,f,g]$.
\end{lemma*}

\begin{proof}
Let $\alpha, \beta$ denote the coordinates of $e$ in the basis $(f',g')$, which are positive integers by construction. Observing that $1e + (\beta-1) f + (\alpha-1) g = (\alpha+\beta-1) (f+g)$ we obtain as announced that $f+g \in T$.

We fix $e$ and prove that $f',g' \in T:= [e,f,g]$, for any direct acute basis $(f,g)$ such that $e \in \ConeS(f,g)$, by \emph{decreasing} induction on the integer $k := \<f,g\>$.
Initialization. Assuming that $k \geq \frac 1 2 \|e\|^2$, we obtain the impossibility $\|e\|^2 = \|\alpha f + \beta g\|^2 > 2 \alpha \beta \<f,g\> \geq 2 \<f,g\> \geq \|e\|^2$. This case is vacuous, hence true.

Induction. If $e=f+g$, then $e = f\oplus g$ and therefore $f=f'$, $g=g'$; the result follows. Otherwise, we have either $e \in \ConeS(f, f+g)$ or $e \in \ConeS(f+g, g)$. By induction, since $\<f,f+g\> > \<f,g\>$ and $\<f+g,g\> > \<f,g\>$, we obtain that $f',g'$ belong to $T_1 := [e,f,f+g]$ or $T_2 := [e,g,f+g]$. Recalling that $f+g \in T$ we obtain $T_1 \cup T_2 \subset T$ which concludes the proof.
\end{proof}

\bibliographystyle{alpha}
\bibliography{Includes/AllPapers}

\end{document}